\theoremstyle{plain}
\newtheorem{theorem}{Theorem}
\newtheorem*{theorem*}{Theorem}
\newtheorem{cor}[theorem]{Corollary}
\newtheorem*{conjecture}{Conjecture}
\newtheorem{lemma}[theorem]{Lemma}
\newtheorem*{lemma*}{Lemma}
\theoremstyle{definition}
\newtheorem{definition}[theorem]{Definition}
\theoremstyle{remark}
\newtheorem{example}{Example}
\newtheorem{rmk}{Remark}
\DeclareMathAlphabet{\mathbbold}{U}{bbold}{m}{n}	
\DeclareMathOperator{\sinc}{\mathrm{sinc}}
\DeclareMathOperator{\spn}{\mathrm{span}}
\DeclareMathOperator{\rank}{\mathrm{rank}}
\newcommand{\ball}{\mathcal{B}}
\newcommand{\R}{\mathbb{R}}
\newcommand{\N}{\mathbb{N}}
\newcommand{\distr}{\mathcal{D}}
\title{Measure contraction properties of Carnot groups}
\author{Rizzi Luca}
\address{CNRS, CMAP \'Ecole Polytechnique, and \'Equipe INRIA GECO Saclay \^Ile-de-France, Paris}
\email{\href{mailto:luca.rizzi@cmap.polytechnique.fr}{luca.rizzi@cmap.polytechnique.fr}}
\subjclass[2010]{53C17, 53C22, 35R03, 54E35, 53C21}
\date{\today}
\begin{document}

\begin{abstract}
We prove that any corank 1 Carnot group of dimension $k+1$ equipped with a left-invariant measure satisfies the $\mathrm{MCP}(K,N)$ if and only if $K \leq 0$ and $N \geq k+3$. This generalizes the well known result by Juillet for the Heisenberg group $\mathbb{H}_{k+1}$ to a larger class of structures, which admit non-trivial abnormal minimizing curves.

The number $k+3$ coincides with the geodesic dimension of the Carnot group, which we define here for a general metric space. We discuss some of its properties, and its relation with the curvature exponent (the least $N$ such that the $\mathrm{MCP}(0,N)$ is satisfied). We prove that, on a metric measure space, the curvature exponent is always larger than the geodesic dimension which, in turn, is larger than the Hausdorff one. When applied to Carnot groups, our results improve a previous lower bound due to Rifford. As a byproduct, we prove that a Carnot group is ideal if and only if it is fat.


\end{abstract}

\maketitle

\section{Summary of the results}

Let $(X,d)$ be a length space, that is a metric space such that $d(x,y) = \inf_\gamma \ell(\gamma)$ for all $x,y \in X$, where $\ell(\gamma)$ denotes the length of $\gamma$ and the infimum is taken over all rectifiable curves from $x$ to $y$. Throughout this article we assume that $(X,d)$ has \emph{negligible cut loci}, i.e. for any $x \in X$ there exists a negligible set $\mathcal{C}(x)$ and a measurable map $\Phi^x : X \setminus \mathcal{C}(x) \times [0,1] \to X$, such that the curve $\gamma(t)=\Phi^x(y,t)$ is the unique minimizing geodesic from $x$ with $y$. Moreover, let $\mu$ be a Borel measure such that $0< \mu(\ball(x,r)) < +\infty$ for any $r >0$, where $\ball(x,r)$ is the metric ball of radius $r$ centered in $x$. A triple $(X,d,\mu)$ satisfying the assumptions above is called a \emph{metric measure space}. Any complete Riemannian manifold, equipped with its Riemannian measure, provides an example.

For any set $\Omega$, we consider its geodesic homothety of center $x \in X$ and ratio $t \in [0,1]$:
\begin{equation}\label{eq:omot}
\Omega_t:= \{\Phi^x(y,t) \mid y \in X \setminus \mathcal{C}(x)\}.
\end{equation}
For any $K \in \R$, define the function
\begin{equation}\label{eq:sturm}
s_K(t):= \begin{cases}
(1/\sqrt{K}) \sin(\sqrt{K} t) & \text{if } K>0, \\
t & \text{if } K=0, \\
(1/\sqrt{-K}) \sinh(\sqrt{-K} t) & \text{if } K<0.
\end{cases}
\end{equation}
\begin{definition}[Ohta\footnote{Ohta defines the measure contraction property for general length spaces, possibly with non-negligible cut loci. Under our assumptions, this simpler definition is equivalent to Ohta's, see \cite[Lemma 2.3]{Ohta-MCP}.} \cite{Ohta-MCP}]\label{d:MCP}
Let $K \in \R$ and $N>1$, or $K \leq 0$ and $N=1$. We say that $(X,d,\mu)$ satisfies the \emph{measure contraction property} $\mathrm{MCP}(K,N)$ if for any $x \in M$ and any measurable set $\Omega$ with  with $0< \mu(\Omega)< + \infty$ (and with $\Omega \subset \ball(x,\pi\sqrt{N-1/K})$ if $K > 0$)
\begin{equation}\label{eq:MCP}
\mu(\Omega_{t}) \geq \int_{\Omega} t \left[\frac{s_K(t d(x,z)/\sqrt{N-1})}{s_K(d(x,z)/\sqrt{N-1})}\right]^{N-1} d\mu(z) , \qquad \forall t \in [0,1],
\end{equation}
where we set $0/0 = 1$ and the term in square bracket is $1$ if $K \leq 0$ and $N=1$.
\end{definition}
In this setting, the measure contraction property is a global control on the evolution of the measure of $\Omega_t$. The function $s_{K}$ comes from the exact behavior of the Jacobian determinant of the exponential map on Riemannian space forms of constant curvature $K$ and dimension $N$, where \eqref{eq:MCP} is an equality. On a complete $n$-dimensional Riemannian manifold $M$ equipped with the Riemannian measure, the $\mathrm{MCP}(K,n)$ is equivalent to $\mathrm{Ric}  \geq K$. (see \cite{Ohta-MCP}). Thus, the measure contraction property is a synthetic replacement for Ricci curvature bounds on more general metric measure spaces, and is actually one the weakest. It has been introduced independently by Ohta \cite{Ohta-MCP} and Sturm \cite{S-II}. See also \cite{S-I,S-II,LV-Ricci} for other (stronger) synthetic curvature conditions, including the popular geometric curvature dimension condition $\mathrm{CD}(K,N)$. An important property, shared by all these synthetic conditions, is their stability under (pointed) Gromov-Hausdorff limits.

\medskip

It is interesting to investigate whether the synthetic theory of curvature bounds can be applied to sub-Riemannian manifolds. These are an interesting class of metric spaces, that generalize Riemannian geometry with non-holonomic constraints. Even though sub-Riemannian structures can be seen as Gromov-Hausdorff limits of sequences of Riemannian ones with the same dimension, these sequences have Ricci curvature unbounded from below (see example in \cite{Rifford}). In general, this is due to the fact that the limit $(X,d)$ of a convergent Gromov-Hausdorff sequence of complete, $n$-dimensional Riemannian manifolds with curvature bounded below has Hausdorff dimension $\dim_H(X) \leq n$ (see \cite[Section 3.10]{MR2307192}), but the Hausdorff dimension of sub-Riemannian structures is always strictly larger than their topological one. For this reason a direct analysis is demanded. 

In this paper we focus on Carnot groups. In the following, any Carnot group $G$ is considered as a metric measure space $(G,d,\mu)$ equipped with the Carnot-Carath\'eodory distance $d$ and a left-invariant measure $\mu$. The latter coincides with the Popp \cite{montgomerybook,BR-Popp} and with the Hausdorff one \cite{ABB-Hausdorff}, up to a constant rescaling. All of them coincide with the Lebesgue measure when we identify $G \simeq \R^{n}$ in a set of exponential coordinates. 

\subsection{The Heisenberg group} In \cite{Juillet}, Juillet proved that the $2d+1$ dimensional Heisenberg group $\mathbb{H}_{2d+1}$ does not satisfy the $\mathrm{CD}(K,N)$ condition, for any value of $K$ and $N$. On the other hand, it satisfies the $\mathrm{MCP}(K,N)$ if and only if $K \leq 0$ and $N \geq 2d+3$. 

The number $\mathcal{N} = 2d+3$, which is the lowest possible dimension for the synthetic condition $\mathrm{MCP}(0,N)$ in $\mathbb{H}_{2d+1}$, is surprisingly larger than its topological dimension ($2d+1$) or the Hausdorff one ($2d+2$). This is essentially due to the fact that, letting $\Omega = \ball(x,1)$, we have $\Omega_t \subset \ball(x,t)$ strictly, and 
\begin{equation}
\mu(\Omega_t) \sim \kappa_1 t^{2d+3}, \qquad\text{while} \qquad \mu(\ball(x,t))\sim \kappa_2  t^{2d+2},
\end{equation}
for $t \to 0^+$ and some constants $\kappa_1$ and $\kappa_2$, see \cite[Remark 2.7]{Juillet}.

\subsection{Corank 1 Carnot groups} 

Our first result is an extension of the $\mathrm{MCP}$ results of \cite{Juillet} to any corank $1$ Carnot group. Observe that these structures have negligible cut loci.

\begin{theorem}\label{t:corank1}
Let $(G,d,\mu)$ be a corank $1$ Carnot group of rank $k$. Then it satisfies the $\mathrm{MCP}(K,N)$ if and only if $K \leq 0$ and $N \geq k+3$.
\end{theorem}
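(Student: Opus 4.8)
The plan is to adapt Juillet's argument for the Heisenberg group, keeping track of the eigenvalues of the structure constants. First I reduce the statement to the single assertion that $\mathrm{MCP}(0,k+3)$ holds. For $K>0$ the property $\mathrm{MCP}(K,N)$ forces a Bonnet--Myers type bound on the diameter (equivalently $\mu(G)<+\infty$), which is impossible for a Carnot group, so no $K>0$ is admissible; and reading off Definition~\ref{d:MCP} one has the monotonicity $\mathrm{MCP}(0,N)\Rightarrow\mathrm{MCP}(K,N')$ whenever $K\le 0$ and $N'\ge N$ (use $t^{N'}\le t^{N}$ on $[0,1]$, and, for $K<0$, convexity of $s_K$ with $s_K(0)=0$, so $s_K(ta)\le t\,s_K(a)$). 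Next, since $G$ has negligible cut loci and $\mu$ is Lebesgue in exponential coordinates, by left-invariance I take the base point to be the identity $e$; with $\exp_e\colon T^*_eG\to G$ the sub-Riemannian exponential map, the homothety is $h_t=\exp_e\circ(t\,\cdot)\circ\exp_e^{-1}$ off $\mathcal{C}(e)$, injective there for $t\in(0,1)$ because a subarc of the unique minimizer from $e$ is again the unique minimizer. The area formula gives $\mu(\Omega_t)=\int_\Omega|\det Dh_t|\,d\mu$, and since \eqref{eq:MCP} for all $\Omega$ is equivalent to the corresponding a.e.\ pointwise inequality, with $|\det Dh_t|=t^{k+1}\,\mathcal{J}(t\lambda)/\mathcal{J}(\lambda)$ where $\mathcal{J}(\lambda):=|\det(D\exp_e)_\lambda|$ and $z=\exp_e(\lambda)$, the theorem reduces to proving $\mathcal{J}(t\lambda)\ge t^{2}\mathcal{J}(\lambda)$ for a.e.\ $\lambda$ and all $t\in[0,1]$ (the ``if'' part), together with sharpness of the exponent $2$ on a set of positive measure (the ``only if'' part).

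I then make $\exp_e$ explicit. Identify $G\simeq\mathbb{R}^k\times\mathbb{R}$ with the group law twisted by a skew-symmetric $k\times k$ matrix $A$, put $A$ in normal form with nonzero eigenvalues $\pm i\alpha_1,\dots,\pm i\alpha_d$ and $\ker A$ of dimension $k-2d$ (the directions carrying the abnormal minimizers), and split the horizontal covector as $u=(u',u'')$ with $u'\in(\ker A)^\perp$, $u''\in\ker A$. Integrating the normal Hamiltonian equations, the horizontal part of the geodesic with covector $(u,w)$ equals $\tfrac{e^{swA}-I}{wA}u'$ on $(\ker A)^\perp$ and $su''$ on $\ker A$, while the vertical coordinate, since $Au''=0$, is a quadratic form in $u'$ that splits as a sum over the eigenplanes of $A$. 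Hence $D\exp_e$ is bordered block-diagonal, and a Schur-complement computation gives
\begin{equation*}
\mathcal{J}(u,w)=\sum_{j=1}^{d}c_j\,|u_j'|^2\,g(w\alpha_j)\prod_{l\ne j}\sinc^2\!\left(\tfrac{w\alpha_l}{2}\right),
\end{equation*}
where $c_j>0$ (a positive multiple of $\alpha_j^2$), $|u_j'|$ is the length of $u'$ in the $j$-th eigenplane of $A$, and $g(\theta)=\frac{2\sin(\theta/2)(2\sin(\theta/2)-\theta\cos(\theta/2))}{\theta^4}$ is the Jacobian function of the three-dimensional Heisenberg group ($g(0)=\tfrac1{12}$, $g>0$ on $[0,2\pi)$, $g(2\pi)=0$). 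In particular $\mathcal{J}$ does not depend on $u''$, is homogeneous of degree $2$ in $u'$, and on the relevant full-measure set of covectors one has $|w|<2\pi/\alpha_{\max}$, $u'\ne 0$, and $\mathcal{J}>0$.

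With this formula, and since $\mathcal{J}(t\lambda)=t^2\mathcal{J}(u,tw)$, the inequality $\mathcal{J}(t\lambda)\ge t^2\mathcal{J}(\lambda)$ is equivalent to: for each fixed $u'$, the map $w\mapsto\mathcal{J}(u',w)$ is non-increasing in $|w|$ on $(-2\pi/\alpha_{\max},2\pi/\alpha_{\max})$. On that range every $\sinc(w\alpha_l/2)$ is positive and decreasing in $|w|$ (as $|w\alpha_l/2|<\pi$), and $g$ is positive and decreasing on $(0,2\pi)$ --- a one-variable estimate which for $d=1$ is Juillet's lemma for $\mathbb{H}_3$ --- so each summand, hence $\mathcal{J}(u',\cdot)$, is non-increasing in $|w|$. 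This gives the ``if'' direction and shows the curvature exponent of $G$ is at most $k+3$. For ``only if'', along the straight-line directions $w=0$, $u'\ne 0$, whose endpoints $(u,0)$ lie outside $\mathcal{C}(e)$, equality $\mathcal{J}(tu,0)=t^2\mathcal{J}(u,0)\ne 0$ holds, so $|\det Dh_t|$ equals $t^{k+3}$ there; if $N<k+3$ then for every $K\le 0$ the integrand on the right-hand side of \eqref{eq:MCP} at such a point is asymptotic to $c\,t^{N}$ with $c>0$ as $t\to 0^+$, which exceeds $t^{k+3}$, so by continuity \eqref{eq:MCP} is violated on a neighborhood and $\mathrm{MCP}(K,N)$ fails. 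This matches the general lower bound (curvature exponent $\ge$ geodesic dimension $=k+3$) and completes the ``only if'' part.

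The step I expect to be the main obstacle is obtaining the explicit formula for $\mathcal{J}$ and extracting the right monotonicity from it. One must integrate the Hamiltonian flow eigenplane by eigenplane, check that the vertical coordinate decouples into a sum over eigenplanes so that the Schur complement produces the displayed sum, and recognize the Heisenberg function $g$ in each block; because the $\alpha_j$ need not be equal, for a non-maximal eigenvalue the factor $g(w\alpha_j)$ has argument only in $(0,2\pi\alpha_j/\alpha_{\max})$ and must be paired with one transverse $\sinc^2$-factor (the vertical factor alone is not monotone, but $g=\sinc^2\cdot(\text{that factor})$ is), which is where the exact trigonometric form matters. Finally the whole reduction has to be carried through in the non-fat case $\ker A\ne\{0\}$, where abnormal minimizers occur --- this is exactly where the hypothesis that corank $1$ Carnot groups have negligible cut loci is used.
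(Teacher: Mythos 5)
Your proposal is correct and follows essentially the same route as the paper: reduction to the identity by left-invariance, the explicit block-diagonal Jacobian of $\exp_e$ (your formula agrees with Lemma~\ref{l:jacobian}, with $c_j=\alpha_j^2$), a Juillet-type one-variable inequality giving $J(t\lambda)\ge t^2 J(\lambda)$ on the injectivity domain, and sharpness along the covectors with vanishing vertical component. Your claim that $g$ is decreasing on $(0,2\pi)$ is exactly equivalent to the paper's Lemma~\ref{l:ineq} (both amount to $s\mapsto(\sin s-s\cos s)/s^3$ being decreasing on $(0,\pi)$, combined with the monotonicity of $\sinc$), and your direct $s_K$-asymptotics for $K<0$ replace, to the same effect, the paper's dilation-rescaling reduction to the $K=0$ case.
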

\begin{rmk}\label{r:ideal}
We stress that, in general, corank $1$ Carnot groups admit non-trivial abnormal minimizing curves (albeit not strictly abnormal ones). In particular they are not all ideal.
\end{rmk}

\subsection{The geodesic dimension}

The geodesic dimension was introduced in \cite{ABR-Curvature} for sub-Riemannian structures. We define it here in the more general setting of metric measure spaces (which, we recall, are assumed having negligible cut loci).
\begin{definition}
Let $(X,d,\mu)$ be a metric measure space. For any $x \in X$ and $s > 0$, define
\begin{equation}\label{eq:criticalratio}
C_s(x):=\sup\left\lbrace \limsup_{t \to 0^+} \frac{1}{t^s}\frac{\mu(\Omega_t)}{\mu(\Omega)} \mid \Omega \text{ measurable, bounded, $0<\mu(\Omega)<+\infty$}\right\rbrace,
\end{equation}
where $\Omega_t$ is the homothety of $\Omega$ with center $x$ and ratio $t$ as in \eqref{eq:omot}. We define the \emph{geodesic dimension} of $(X,d,\mu)$ at $x \in X$ as the non-negative real number
\begin{equation}\label{eq:gddefs}
\mathcal{N}(x)  := \inf \{s > 0 \mid C_s(x) = +\infty \} = \sup \{s > 0 \mid C_s(x) = 0 \},
\end{equation}
with the conventions $\inf \emptyset = + \infty$ and $\sup \emptyset = 0$.
\end{definition}
Roughly speaking, the measure of $\mu(\Omega_t)$ vanishes at least as $t^{\mathcal{N}(x)}$ or more rapidly, for $t \to 0$. The two definitions in \eqref{eq:gddefs} are equivalent since $s \geq s'$ implies $C_s(x) \geq C_{s'}(x)$. 
\begin{rmk}\label{r:changeofmeas}
$\mathcal{N}(x)$ does not change if we replace $\mu$ with any commensurable measure (two measures $\mu,\nu$ are commensurable if they are mutually absolutely continuous, i.e. $\mu \ll \nu$ and $\nu \ll \mu$, and the Radon-Nikodym derivatives $\tfrac{d\mu}{d\nu}$, $\tfrac{d\nu}{d\mu}$ are locally essentially bounded).
\end{rmk}
The geodesic dimension $\mathcal{N}(x)$ is a local property. In fact, for sufficiently small $t>0$, the set $\Omega_t$ lies in an arbitrarily small neighborhood of $x$. The next theorem puts it in relation with the Hausdorff dimension $\dim_H(B)$ of a subset $B \subseteq X$ (see \cite{AT-Metric} for reference).
\begin{theorem}\label{t:basicprop}
Let $(X,d,\mu)$ be a metric measure space.  Then, for any Borel subset $B$
\begin{equation}
\sup \{\mathcal{N}(x) \mid x \in B\} \geq \dim_H(B).
\end{equation}
\end{theorem}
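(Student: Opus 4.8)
The plan is to factor the inequality through a purely metric‑measure quantity, the \emph{upper density dimension} at $x$,
\[
D(x):=\sup\Bigl\{s>0 \;\Bigm|\; \textstyle\limsup_{\rho\to 0^+}\mu(\ball(x,\rho))/\rho^{s}=0\Bigr\},
\]
and to establish separately the two inequalities $\mathcal N(x)\ge D(x)$ for every $x$, and $\sup_{x\in B}D(x)\ge \dim_H(B)$; chaining them gives the theorem. The point of this splitting is that the first inequality is where the (very weak) geometric information about geodesic homotheties enters, while the second is a classical density estimate for Hausdorff measure.

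For the comparison $\mathcal N(x)\ge D(x)$, the only geometric input I will use is the crude inclusion $\Omega_t\subseteq\overline{\ball}(x,tR_\Omega)$, where $R_\Omega:=\sup_{z\in\Omega}d(x,z)<+\infty$ because $\Omega$ is bounded: since $t\mapsto\Phi^x(y,t)$ is a constant‑speed minimizing geodesic one has $d(x,\Phi^x(y,t))=t\,d(x,y)\le tR_\Omega$ for every $y\in\Omega\setminus\mathcal C(x)$. Hence $\mu(\Omega_t)\le\mu(\overline{\ball}(x,tR_\Omega))\le\mu(\ball(x,2tR_\Omega))$, so dividing by $t^{s}\mu(\Omega)$ and passing to the $\limsup$ as $t\to 0^+$,
\[
\limsup_{t\to 0^+}\frac{1}{t^{s}}\frac{\mu(\Omega_t)}{\mu(\Omega)}\;\le\;\frac{(2R_\Omega)^{s}}{\mu(\Omega)}\,\limsup_{\rho\to 0^+}\frac{\mu(\ball(x,\rho))}{\rho^{s}}.
\]
Thus if the upper $s$‑density $\limsup_{\rho\to 0^+}\mu(\ball(x,\rho))/\rho^{s}$ vanishes, then every admissible $\Omega$ contributes $0$ to the supremum \eqref{eq:criticalratio} defining $C_s(x)$, so $C_s(x)=0$. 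Since $s\mapsto\limsup_{\rho\to0^+}\mu(\ball(x,\rho))/\rho^{s}$ is monotone (vanishing at some $s$ forces vanishing for all smaller exponents), this holds for all $s<D(x)$, and by \eqref{eq:gddefs} we conclude $\mathcal N(x)\ge D(x)$.

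For $\sup_{x\in B}D(x)\ge\dim_H(B)$ I will run the standard density argument. Reduce first to $B$ bounded, by writing a general Borel set as a countable union of bounded pieces and invoking countable stability of $\dim_H$. Fix $s>\sup_{x\in B}D(x)$; by definition of $D$ and the same monotonicity, $\limsup_{\rho\to 0^+}\mu(\ball(x,\rho))/\rho^{s}=+\infty$ for every $x\in B$. Given $\delta,\lambda>0$, for each $x\in B$ choose $r_x\in(0,\delta)$ with $\mu(\ball(x,r_x))\ge\lambda r_x^{s}$; by the $5r$‑covering lemma extract a disjoint subfamily $\{\ball(x_i,r_{x_i})\}$ with $B\subseteq\bigcup_i\ball(x_i,5r_{x_i})$, and estimate
\[
\mathcal H^{s}_{10\delta}(B)\le 10^{s}\sum_i r_{x_i}^{s}\le\frac{10^{s}}{\lambda}\sum_i\mu(\ball(x_i,r_{x_i}))\le\frac{10^{s}}{\lambda}\,\mu(B_\delta),
\]
$B_\delta$ being the $\delta$‑neighbourhood of $B$, which has finite measure since $B$ is bounded. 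Letting $\delta\to 0$ gives $\mathcal H^{s}(B)\le(10^{s}/\lambda)\mu(\overline B)$, and then $\lambda\to\infty$ gives $\mathcal H^{s}(B)=0$, whence $\dim_H(B)\le s$; as $s>\sup_{x\in B}D(x)$ was arbitrary, $\dim_H(B)\le\sup_{x\in B}D(x)$.

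The argument is essentially routine, and there is no single hard step; the care is all in the set‑up. The measurability of $\Omega_t$ (which is not obvious, being a forward image under a merely measurable map) is a non‑issue here because $\Omega_t$ enters only through the monotone bound $\mu(\Omega_t)\le\mu(\overline{\ball}(x,tR_\Omega))$, valid for outer measure regardless; one must also check that the $5r$‑covering lemma and continuity from above of $\mu$ are available under the standing metric–measure assumptions, which they are. The conceptual content to highlight is that the lower bound $\mathcal N(x)\ge D(x)$ uses \emph{no} information on the rate at which homotheties contract measure — only that a homothety of ratio $t$ centred at $x$ pushes everything into the ball of radius $tR_\Omega$ — so the geodesic dimension automatically dominates the local density dimension, hence (after taking suprema) the Hausdorff dimension.
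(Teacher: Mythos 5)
Your proof is correct and follows essentially the same route as the paper: the only geometric input is the inclusion $\Omega_t\subseteq\ball(x,tR_\Omega)$, which reduces everything to comparing the local upper density of $\mu$ with the Hausdorff dimension of $B$. The sole difference is cosmetic: where the paper invokes a standard density theorem (\cite[Theorem 2.4.3]{AT-Metric}) to produce a point of $B$ with finite upper $k$-density for each $k<\dim_H(B)$, you re-derive that comparison from scratch via the $5r$-covering lemma, making the argument self-contained.
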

The next result appears in \cite[Proposition 5.49]{ABR-Curvature}, and we give a self-contained proof. A measure on a smooth manifold is smooth if it is defined by a positive smooth density.
\begin{theorem}\label{t:basicprop2}
Let $(X,d,\mu)$ be a metric measure space defined by an equiregular sub-Rieman\-nian or Riemannian structure, equipped with a smooth measure $\mu$. Then
\begin{equation}
\mathcal{N}(x) \geq \dim_H(X) \geq \dim(X), \qquad \forall x \in X,
\end{equation}
and both equalities hold if and only if $(X,d,\mu)$ is Riemannian.
\end{theorem}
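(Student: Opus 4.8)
The plan is to prove the two inequalities and the equality case separately. Throughout I use equiregularity to denote by $\mathcal{D}=\mathcal{D}^1\subseteq\mathcal{D}^2\subseteq\cdots\subseteq\mathcal{D}^r=TX$ the canonical flag and by $d_i:=\rank\mathcal{D}^i-\rank\mathcal{D}^{i-1}$ its (constant) increments, so that $n:=\dim(X)=\sum_{i=1}^r d_i$. By Remark \ref{r:changeofmeas} the number $\mathcal{N}(x)$ is unchanged if $\mu$ is replaced by the Lebesgue (equivalently Popp, or Hausdorff) measure, so I may and do assume $\mu$ has this form; then Mitchell's theorem and the ball--box estimate give $Q:=\dim_H(X)=\sum_{i=1}^r i\,d_i$ and $\mu(\ball(x,r))\asymp r^{Q}$ as $r\to 0^+$ (see \cite{montgomerybook}).

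First I would establish $\mathcal{N}(x)\geq\dim_H(X)$ at every point, directly from the ball estimate. Given a bounded measurable $\Omega$ with $0<\mu(\Omega)<+\infty$, the identity $d(x,\Phi^x(y,t))=t\,d(x,y)$ forces $\Omega_t\subseteq\ball(x,tR)$ with $R:=\sup_{y\in\Omega}d(x,y)<+\infty$, hence $\mu(\Omega_t)\leq\mu(\ball(x,tR))\leq C\,t^{Q}$ for all small $t$, with $C$ depending only on $x$ and $\Omega$. Therefore, for every $s<Q$, one has $t^{-s}\mu(\Omega_t)/\mu(\Omega)\leq (C/\mu(\Omega))\,t^{Q-s}\to 0$ as $t\to0^+$, so $C_s(x)=0$ for all $s<Q$ and thus $\mathcal{N}(x)\geq Q=\dim_H(X)$. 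Next, the inequality $\dim_H(X)\geq\dim(X)$ is the elementary observation $Q=\sum_i i\,d_i=n+\sum_{i\geq 2}(i-1)d_i\geq n$, with equality if and only if $d_i=0$ for all $i\geq 2$, i.e.\ $\mathcal{D}=\mathcal{D}^1=TX$; this is exactly the Riemannian case. In particular, if both equalities in the statement hold then $\dim_H(X)=\dim(X)$, which forces the structure to be Riemannian.

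It remains to check that a Riemannian structure realizes all the equalities, which I would do by computing $\mathcal{N}(x)$ by hand. Writing $\exp_x : T_xX\to X$ for the Riemannian exponential map and $(\exp_x)^*\mu=\mathcal{J}\,dv$ with $\mathcal{J}$ smooth and strictly positive on the (star-shaped) injectivity domain, the relation $\Phi^x(\exp_x v,t)=\exp_x(tv)$ gives $\Omega_t=\exp_x(tA)$ up to the negligible cut locus, where $A:=\exp_x^{-1}(\Omega)$, so that $\mu(\Omega_t)=\int_{tA}\mathcal{J}(v)\,dv=t^{\,n}\int_{A}\mathcal{J}(tw)\,dw$. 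Since $\mathcal{J}$ is continuous and locally bounded and $\mathcal{J}(tw)\to\mathcal{J}(0)>0$, the integral on the right stays bounded and converges to $\mathcal{J}(0)\,|A|\in(0,+\infty)$ whenever $|A|>0$. It follows that $C_s(x)=0$ for $s<n$ while $C_s(x)=+\infty$ for $s>n$, hence $\mathcal{N}(x)=n=\dim_H(X)=\dim(X)$ for all $x$, completing the equivalence.

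The one place where care is genuinely needed is the last step: the uniform control of the Jacobian $\mathcal{J}$ of the geodesic homothety near the center (continuity, positivity, local boundedness), which lets the limit pass under the integral sign; in the Riemannian case this is classical, and it is all the stated theorem requires. The serious obstacle appears only if one wants the sharper assertion that the equality $\mathcal{N}(x)=\dim_H(X)$ \emph{on its own} characterizes the Riemannian case: for that one must prove $\mathcal{N}(x)>Q$ whenever $\mathcal{D}\neq TX$, which requires determining the precise order of vanishing of $\mathcal{J}(v)$ as $v\to 0$ through the nilpotent (Carnot) approximation at $x$, eventually yielding the exact value $\mathcal{N}(x)=\sum_{i=1}^r(2i-1)d_i$ of \cite{ABR-Curvature} together with the strict inequality $\sum_i(2i-1)d_i>\sum_i i\,d_i$.
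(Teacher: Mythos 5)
Your proof is correct, but it follows a genuinely different route from the one in the paper. The paper computes $\mathcal{N}(x)$ exactly via the formula \eqref{eq:gdformula} in terms of the maximal geodesic growth vector (which rests on Theorem~\ref{t:curv2}, imported from \cite{ABR-Curvature}) and compares it term by term with Mitchell's formula \eqref{eq:mitchell}; this yields the \emph{strict} chain $\mathcal{N}(x) > \dim_H(X) > \dim(X)$ whenever the structure is properly sub-Riemannian. You instead avoid the geodesic growth vector entirely: you get $\mathcal{N}(x)\geq\dim_H(X)$ pointwise from the elementary containment $\Omega_t\subseteq\ball(x,tR)$ combined with the ball--box upper bound $\mu(\ball(x,r))\leq Cr^{Q}$ (a pointwise sharpening of the argument used for Theorem~\ref{t:basicprop}, made possible by equiregularity), you get $\dim_H(X)\geq\dim(X)$ from Mitchell's formula with equality iff $\distr=TX$, and you settle the equality case by the observation that the \emph{second} equality alone already forces the structure to be Riemannian, so only the converse direction requires a computation, which you do with the classical Riemannian Jacobian. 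This is a legitimate and more self-contained proof of the statement as written. The trade-off, which you correctly flag yourself, is that your argument does not recover the finer fact contained in the paper's proof, namely that $\mathcal{N}(x)=\dim_H(X)$ \emph{on its own} characterizes the Riemannian case (equivalently, that the first inequality is also strict in the sub-Riemannian case); for that one genuinely needs the asymptotics of the sub-Riemannian exponential map encoded in \eqref{eq:gdformula}. Since the theorem only asserts the equivalence with ``both equalities,'' nothing is missing from your proof.
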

\begin{rmk}\label{r:haudorffcomm}
For an equiregular (sub-)Riemannian structure, the Hausdorff measure is commensurable with respect to any smooth one \cite{Mitchell}. This is no longer true in the non-equiregular case \cite{GJ-Hausdorff}. By choosing the Hausdorff measure instead of a smooth one, one obtains, a priori, a different geodesic dimension $\mathcal{N}(x)$.
\end{rmk}
\begin{rmk}
The positivity assumption on $\mu$ is essential to describe the equality case. For example, if $X = \mathbb{R}$ with the Euclidean metric and $\mu= x^2 dx$, we have $\mathcal{N}(x) = 1$ for $x \neq 0$ and $\mathcal{N}(x) =3$ for $x =0$. Clearly $dx$ and $x^2 dx$ are not commensurable.
\end{rmk}

\subsection{A lower bound for the MCP dimension}

If $(X,d,\mu)$ satisfies the $\mathrm{MCP}(K,N)$, then $N \geq \mathcal{N}(x)$ at any point. We give here a general statement for metric measure spaces (which, we recall, are always assumed to have negligible cut loci).
\begin{theorem}\label{t:bound}
Let $(X,d,\mu)$ be a metric measure space, with geodesic dimension $\mathcal{N}(x)$, that satisfies the $\mathrm{MCP}(K,N)$, for some $K \in \R$ and $N >1$ or $K\leq 0$ and $N=1$. Then
\begin{equation}
N \geq \sup \{\mathcal{N}(x) \mid x \in X\}.
\end{equation}
\end{theorem}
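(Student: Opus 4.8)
The plan is to read off the leading-order behaviour of the $\mathrm{MCP}(K,N)$ inequality \eqref{eq:MCP} as $t\to 0^+$. Fix $x\in X$; it suffices to prove $\mathcal{N}(x)\le N$, and for this it is enough to produce one admissible competitor $\Omega$ in \eqref{eq:criticalratio} with
\[
\limsup_{t\to 0^+}\frac{1}{t^s}\frac{\mu(\Omega_t)}{\mu(\Omega)}=+\infty\qquad\text{for every }s>N,
\]
since then $C_s(x)=+\infty$ for all such $s$ and hence $\mathcal{N}(x)=\inf\{s>0\mid C_s(x)=+\infty\}\le N$; taking the supremum over $x$ gives the claim.

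I would take $\Omega=\ball(x,r)$ with $r$ small, and in addition $r<\pi\sqrt{(N-1)/K}$ when $K>0$, so that $\Omega$ is an admissible set in \eqref{eq:MCP}; clearly $0<\mu(\Omega)<+\infty$ and $\Omega$ is bounded. Assume $N>1$ (the case $K\le 0$, $N=1$ is trivial: the bracket in \eqref{eq:MCP} is $1$, so $\mu(\Omega_t)\ge t\mu(\Omega)$ directly) and write $a_z:=d(x,z)/\sqrt{N-1}$. The only analytic input is the elementary fact that $h(u):=s_K(u)/u$, extended by $h(0):=1$, is continuous, so $h(tu)^{N-1}\to 1$ uniformly for $u$ in any fixed bounded interval as $t\to 0^+$. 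Since $a_z$ ranges in a bounded interval as $z$ varies in $\Omega$, the integrand of \eqref{eq:MCP} can be rewritten as
\[
t\left[\frac{s_K(t a_z)}{s_K(a_z)}\right]^{N-1}=t^N\left[\frac{a_z}{s_K(a_z)}\right]^{N-1}h(t a_z)^{N-1}\ \ge\ (1-\varepsilon)\,t^N\left[\frac{a_z}{s_K(a_z)}\right]^{N-1}
\]
for all $t$ smaller than some $t_0=t_0(\varepsilon)$, where $\varepsilon>0$ is arbitrary. Integrating over $\Omega$ and invoking \eqref{eq:MCP} yields, for $t<t_0$,
\[
\mu(\Omega_t)\ \ge\ (1-\varepsilon)\,c_\Omega\,t^N\,\mu(\Omega),\qquad c_\Omega:=\frac{1}{\mu(\Omega)}\int_\Omega\left[\frac{a_z}{s_K(a_z)}\right]^{N-1}d\mu(z),
\]
and, since $a\mapsto (a/s_K(a))^{N-1}$ is continuous and strictly positive on the compact interval $[0,r/\sqrt{N-1}]$ (using $r<\pi\sqrt{(N-1)/K}$ when $K>0$), we have $0<c_\Omega<+\infty$.

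Letting $\varepsilon\to 0$ gives $\liminf_{t\to 0^+}t^{-N}\mu(\Omega_t)/\mu(\Omega)\ge c_\Omega>0$, so for every $s>N$,
\[
\limsup_{t\to 0^+}\frac{1}{t^s}\frac{\mu(\Omega_t)}{\mu(\Omega)}=\limsup_{t\to 0^+}\ t^{N-s}\cdot\frac{1}{t^N}\frac{\mu(\Omega_t)}{\mu(\Omega)}=+\infty,
\]
because $t^{N-s}\to+\infty$ while the second factor stays $\ge c_\Omega/2>0$ for $t$ near $0$. Note the same ball $\Omega$ works for all $s>N$ at once. This is exactly the property required above, so $\mathcal{N}(x)\le N$ for every $x\in X$, i.e.\ $N\ge\sup\{\mathcal{N}(x)\mid x\in X\}$.

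No serious obstacle is expected: the proof is just a one-sided asymptotic expansion of \eqref{eq:MCP}. The points needing mild care are the uniformity of $h(t a_z)^{N-1}\to 1$ over $z\in\Omega$—handled by taking $\Omega$ a small ball, so that the distances involved are small and bounded—and, for $K>0$, respecting the diameter restriction in Definition~\ref{d:MCP}, again guaranteed by the small-ball choice; measurability of $\Omega_t$ is already part of the $\mathrm{MCP}$ setup.
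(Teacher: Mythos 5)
Your proof is correct and follows essentially the same route as the paper's: both extract the lower bound $\mu(\Omega_t)\geq C\,t^{N}\mu(\Omega)$ from the $\mathrm{MCP}(K,N)$ inequality by controlling $s_K(t\delta)/s_K(\delta)$ from below by a constant multiple of $t$ uniformly over the bounded range of $d(x,z)$, and then conclude $\mathcal{N}(x)\leq N$ from the characterization $\mathcal{N}(x)=\inf\{s>0\mid C_s(x)=+\infty\}$. The only differences are cosmetic: you argue directly with a small ball and the normalized function $h(u)=s_K(u)/u$, whereas the paper argues by contradiction with an arbitrary bounded $\Omega$ and the cruder bound $s_K(t\delta)/s_K(\delta)\geq A_{K,R}\,t$.
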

The following definition was given originally in \cite{Rifford} for Carnot groups.
\begin{definition}
Let $(X,d,\mu)$ be a metric measure space that satisfies the $\mathrm{MCP}(0,N)$ for some $N\geq 1$. Its \emph{curvature exponent} is
\begin{equation}
N_0:= \inf\{N>1 \mid \mathrm{MCP}(0,N) \text{ is satisfied}\}.
\end{equation}
When $(X,d,\mu)$ does not satisfy the $\mathrm{MCP}(0,N)$ for all $N\geq 1$, we set $N_0 = +\infty$.
\end{definition}
If $N_0 < + \infty$, then the $\mathrm{MCP}(0,N_0)$ is satisfied. Theorem~\ref{t:bound} implies that $N_0 \geq \mathcal{N}$. It may happen that $N_0 > \mathcal{N}$ strictly, as in the following example.
\begin{example}[Riemannian Heisenberg]\label{ex:ex1}
Consider the Riemannian structure generated by the following global orthonormal vector fields, in coordinates $(x,y,z) \in \R^3$:
\begin{equation}
X = \partial_x -\frac{y}{2}\partial_z, \qquad Y = \partial_y + \frac{x}{2}\partial_z, \qquad Z = \partial_z.
\end{equation}
Being a Riemannian structure, $\mathcal{N} = 3$. In \cite{Rifford} it is proved that, when equipped with the Riemannian volume, it satisfies the $\mathrm{MCP}(0,5)$. With the same computations it is easy to prove that the $\mathrm{MCP}(0,5-\varepsilon)$ is violated for any $\varepsilon>0$, so its curvature exponent is $N_0 = 5$.
\end{example}
%

\subsection{Back to Carnot groups}

In \cite{Rifford} Rifford studied the measure contraction properties of general Carnot groups. It may happen that $N_0 = +\infty$, that is the $\mathrm{MCP}(0,N)$ is never satisfied. However, if the Carnot group is ideal (i.e. it does not admit non-trivial abnormal minimizing curves), we have the following result.
\begin{theorem}[Rifford \cite{Rifford}]
Let $(G,d,\mu)$ be a Carnot group. Assume it is ideal. Then it satisfies the $\mathrm{MCP}(0,N)$ for some $N > 1$. In particular its curvature exponent $N_0$ is finite.
\end{theorem}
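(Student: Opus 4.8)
\textbf{Reduction to a Jacobian inequality.}
The plan is to turn the measure contraction inequality into a pointwise bound on the Jacobian of the sub-Riemannian exponential map and to control that bound via the group's dilations together with the absence of abnormal minimizers. By left-invariance of $d$ and $\mu$ it suffices to verify \eqref{eq:MCP}, with $K=0$, at the identity $e$. Since $G$ is ideal, every minimizing geodesic is normal and strongly normal; let $\exp_e\colon T_e^*G\to G$ be the normal exponential map, which is real-analytic because the sub-Riemannian Hamiltonian is polynomial in exponential coordinates. For $\mu$-a.e.\ $y$ there is a unique minimizing geodesic from $e$ to $y$, necessarily of the form $t\mapsto\exp_e(tp)$ with $p=\mathcal L(y)$ minimizing strictly beyond time $1$, so that $\Phi^e(y,t)=\exp_e(t\mathcal L(y))$. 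Writing $n:=\dim G$, identifying $\mu$ with Lebesgue measure in exponential coordinates and setting $\mathcal A(q):=|\det D\exp_e|_q|$, the area formula gives, for every bounded measurable $\Omega$,
\begin{equation*}
\mu(\Omega_t)=\int_\Omega t^{\,n}\,\frac{\mathcal A\bigl(t\mathcal L(y)\bigr)}{\mathcal A\bigl(\mathcal L(y)\bigr)}\,d\mu(y).
\end{equation*}
Let $\mathcal R\subset T_e^*G$ be the open set, star-shaped about $0$, of covectors whose geodesic is minimizing strictly beyond time $1$, so that $\mathcal L$ takes values in $\mathcal R$. By strong normality a geodesic has no conjugate point strictly before its cut time; hence $\exp_e$ is a local diffeomorphism at every point of $\mathcal R\setminus\{0\}$, $\mathcal A>0$ there, and (up to a global sign) $\mathcal A$ is a positive real-analytic function on $\mathcal R\setminus\{0\}$ vanishing to a finite order at $0$. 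The displayed identity shows that $\mathrm{MCP}(0,N)$ holds as soon as
\begin{equation*}
\mathcal A(tp)\ \ge\ t^{\,N-n}\,\mathcal A(p)\qquad\text{for all }t\in[0,1]\text{ and all }p\in\mathcal R .
\end{equation*}

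\textbf{A radial logarithmic derivative.}
Fix $p\in\mathcal R\setminus\{0\}$ and set $g_p(t):=\mathcal A(tp)>0$ on $(0,1]$. The inequality above reads $g_p(t)\ge t^{N-n}g_p(1)$ on $[0,1]$; since $\mathcal R$ is star-shaped it is implied by $\tfrac{d}{dt}\log g_p(t)\le (N-n)/t$ for $t\in(0,1]$, i.e.\ by
\begin{equation*}
\mathcal E(p)\ :=\ \frac{\langle\nabla\mathcal A(p),p\rangle}{\mathcal A(p)}\ \le\ N-n\qquad\text{for all }p\in\mathcal R\setminus\{0\}
\end{equation*}
(conversely, differentiating $g_p(t)-t^{N-n}g_p(1)\ge0$ at $t=1$ shows this bound on $\mathcal E$ is also necessary). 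Thus the theorem reduces to the claim that $\mathcal E$ is bounded above on $\mathcal R\setminus\{0\}$, with the curvature exponent then satisfying $N_0\le n+\sup_{\mathcal R\setminus\{0\}}\mathcal E$.

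\textbf{Boundedness of $\mathcal E$.}
The function $\mathcal E$ is continuous on $\mathcal R\setminus\{0\}$, so it can only fail to be bounded above near $0$, near the topological boundary $\partial\mathcal R$, or along rays escaping to infinity inside $\mathcal R$. Near $0$, along each ray $g_p(t)=t^{\nu(p)}u_p(t)$ with $\nu(p)\in\N$ and $u_p(0)>0$, whence $\mathcal E(tp)\to\nu(p)$ as $t\to0^+$; by Weierstrass preparation the order $\nu(\cdot)$ is locally bounded. Near $\partial\mathcal R$, a boundary covector $q$ either produces a conjugate point at time $1$ — in which case $g_q(t)\downarrow0$ as $t\uparrow1$ and $\mathcal E(tq)\to-\infty$ — or the failure of minimality at time $1$ is due to a second minimizing geodesic reaching the same point, near which $\mathcal A$ stays bounded away from $0$ and $\mathcal E$ is continuous up to the boundary; in either case $\mathcal E$ remains bounded above there. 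Finally, the group dilations $\delta_\lambda$ lift through $\exp_e$ to dilations $\delta^*_\lambda$ of $T_e^*G$ that preserve $\mathcal R$ and make $\mathcal A$ homogeneous of a fixed degree; using them one reduces the estimate on the unbounded part of $\mathcal R$ to a compact ``cosphere'', on which the local bounds above and a compactness argument give a uniform constant $C$. Hence $\mathcal E\le C$ on $\mathcal R\setminus\{0\}$ and $\mathrm{MCP}(0,N)$ holds with $N=n+C<+\infty$; combined with Theorem~\ref{t:bound} this also gives $\mathcal N(x)\le N_0<+\infty$ for every $x\in G$.

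\textbf{The main difficulty.}
Everything hinges on the \emph{uniform} control of $\mathcal E$ near $\partial\mathcal R$ and in the non-compact directions — equivalently, on a uniform bound for the orders of vanishing $\nu(p)$ and for the radial logarithmic derivative of the exponential Jacobian — and this is precisely where the ideality hypothesis is indispensable. In a Carnot group admitting non-trivial abnormal minimizers, families of normal geodesics accumulate onto an abnormal one, $\mathcal A$ degenerates arbitrarily fast, $\mathcal E$ is genuinely unbounded above, and $N_0=+\infty$ (as in the examples of \cite{Rifford}). The assumption ``no abnormals'' is used through strong normality — giving $\mathcal A>0$ on all of $\mathcal R\setminus\{0\}$, so that the cut locus contributes only $-\infty$ to $\mathcal E$ — and through the global real-analyticity of the geodesic flow, which together make the exponential map tame enough for the Weierstrass and compactness arguments to close.
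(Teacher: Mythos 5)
This theorem is quoted from \cite{Rifford}; the paper does not reprove it, but it does tell you what the engine of Rifford's argument is: a semiconcavity property of the squared distance for ideal structures, which yields a uniform a.e.\ bound on the differential of $\exp_e^{-1}$ and hence the Jacobian contraction estimate in one stroke. Your first step --- reducing $\mathrm{MCP}(0,N)$ at $e$ to the pointwise inequality $\mathcal A(tp)\ge t^{N-n}\mathcal A(p)$ on the injectivity domain --- is correct and is exactly the reduction the paper itself uses in Section~\ref{s:proofcorank1}. But from that point on your argument has a genuine gap: the boundedness of the radial logarithmic derivative $\mathcal E$ \emph{is} the theorem, and you establish it only by a patchwork of asserted local claims. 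Near $p=0$ you control $\lim_{t\to0^+}\mathcal E(tp)=\nu(p)$ ray by ray, but what is needed is a bound on $\mathcal E$ on a full punctured neighborhood of $0$, uniformly in $t$ and $p$; ``Weierstrass preparation'' does not by itself deliver this. Near a first conjugate covector $q$ you argue that $\mathcal E(tq)\to-\infty$ along the ray through $q$, but local boundedness \emph{above} of $\mathcal E$ on a neighborhood of $q$ requires a uniform expansion $\mathcal A(tp')\sim c(p')(t_c(p')-t)^m$ with constants controlled in $p'$; since $\mathcal A\to0$ there, the quotient defining $\mathcal E$ is a priori of indeterminate sign. Neither of these is carried out.

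The compactness reduction is also shakier than you state. The cotangent lift of the Carnot dilation scales the component dual to $\mathfrak g_i$ by $\lambda^{2-i}$, so for step $s\ge3$ the ``cosphere'' $\{2H=1\}\cap\mathcal R$ is a cylinder, not precompact, and the dilations do not compactify the directions dual to the higher strata. You could rescue this by invoking the fact that ideal Carnot groups are fat, hence of step $2$ --- but that is Theorem~\ref{p:fat} of this paper, not something you may assume, and you do not invoke it. Finally, your closing paragraph concedes that the uniform control of $\mathcal E$ is ``precisely where the ideality hypothesis is indispensable'' and then only gestures at how it enters. In Rifford's proof ideality is used concretely: for ideal structures $y\mapsto d^2(x,y)$ is locally semiconcave outside the diagonal, its Hessian is bounded above on compact sets, and since $\exp_e^{-1}$ is (a.e.) $\tfrac12\nabla d^2(e,\cdot)$ composed with fixed linear data, this gives the uniform Jacobian bound directly, with no case analysis at the cut or conjugate locus. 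That substitute for your missing uniformity is what actually closes the proof, and it is absent from your sketch.
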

The proof of the above result is based on a semiconcavity property of the distance for ideal structures, which does not hold in general. Nevertheless, Theorem~\ref{t:corank1} shows that the above statement can hold even in presence of non-trivial abnormal minimizers. In general, nothing is known on the finiteness of $N_0$, but we have the following lower bound.
\begin{theorem}[Rifford \cite{Rifford}]
Let $(G,d,\mu)$ be a Carnot group. Assume it is geodesic with negligible cut loci. Then its curvature exponent $N_0$ satisfies
\begin{equation}\label{eq:boundRiff}
N_0 \geq N_R:=Q+n-k,
\end{equation}
where $Q$ is the Hausdorff dimension, $n$ is the topological one, and $k$ is the rank of the horizontal distribution.
\end{theorem}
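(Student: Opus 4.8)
The plan is to prove the bound by exhibiting a \emph{single} test set $\Omega$ whose geodesic homothety at the identity $e$ contracts fast: if I can show $\mu(\Omega_t) = O(t^{Q+n-k})$ as $t\to 0^+$, I am done. Indeed, since $s_0(t)=t$, the right-hand side of \eqref{eq:MCP} with $x=e$ and $K=0$ is simply $t^N\mu(\Omega)$; so if $\mathrm{MCP}(0,N)$ held with $N<Q+n-k$ we would get $t^N\mu(\Omega)\le\mu(\Omega_t)=O(t^{Q+n-k})$, which fails for $t$ small. Hence $N_0\ge Q+n-k$. (One could instead invoke Theorem~\ref{t:bound}: as $d$ and $\mu$ are left-invariant, $\mathcal N(x)$ is independent of $x$, and the same test set yields $\mathcal N(e)\ge Q+n-k$; but I would run the direct argument.)

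First I would set up the computation at $e$. Identify $G\simeq\R^n$ in exponential coordinates, so $\mu$ is Lebesgue measure and $G$ carries the dilations $\delta_t$ with $\det D\delta_t=t^Q$; write $\mathfrak{g}^*=\mathfrak{g}_1^*\oplus\cdots\oplus\mathfrak{g}_s^*$ for the graded cotangent space at $e$ (so $\dim\mathfrak g_j=:k_j$, $k_1=k$), and let $\mathcal E(\lambda):=\gamma_\lambda(1)$ be the sub-Riemannian exponential map, $\gamma_\lambda$ the normal geodesic with initial covector $\lambda$. Since geodesics reparametrize as $\gamma_{s\lambda}(t)=\gamma_\lambda(st)$, for any $\Omega=\mathcal E(A)$ with $A\subset\mathfrak g^*$ one has $\Omega_t=\mathcal E(tA)$, and by the area formula $\mu(\Omega_t)\le t^n\int_A|\det D\mathcal E(t\nu)|\,d\nu$. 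The structural input is the quasi-homogeneity of $\mathcal E$ on a Carnot group: scaling the normal Hamiltonian system — which depends only on the horizontal momentum — gives the identity $\mathcal E(t\nu)=\delta_t(\mathcal E(\hat\delta_t\nu))$, where $\hat\delta_t$ is the anisotropic dilation of $\mathfrak g^*$ acting as $t^{\,j-1}$ on $\mathfrak g_j^*$ (in particular fixing the horizontal layer $\mathfrak g_1^*$), so that $\det D\hat\delta_t=t^{Q-n}$. Differentiating in $\nu$ and taking determinants yields $|\det D\mathcal E(t\nu)|=t^{2(Q-n)}|\det D\mathcal E(\hat\delta_t\nu)|$, hence
\[
\mu(\Omega_t)\ \le\ t^{2Q-n}\int_A|\det D\mathcal E(\hat\delta_t\nu)|\,d\nu .
\]
Now I would choose $A$ to be a small closed ball around a point $(\lambda_1^0,0,\dots,0)$ with $\lambda_1^0\in\mathfrak g_1^*\setminus\{0\}$ small enough that $|\det D\mathcal E|$ is nonzero on $A$, that $\mathcal E|_A$ is injective, and that the geodesics $\gamma_\nu$, $\nu\in A$, are strictly minimizing on $[0,1]$; then $\Omega:=\mathcal E(A)$ is compact with $0<\mu(\Omega)<+\infty$, it avoids the cut locus, and $\Omega_t=\mathcal E(tA)$ exactly. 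Since $\hat\delta_t\nu\to(\nu_1,0,\dots,0)$ uniformly on $A$ and $|\det D\mathcal E|$ is continuous, $\int_A|\det D\mathcal E(\hat\delta_t\nu)|\,d\nu$ converges to a finite limit, whence $\mu(\Omega_t)=O(t^{2Q-n})$. Finally $2Q-n=\sum_j(2j-1)k_j\ge k_1+\sum_{j\ge 2}(j+1)k_j=Q+n-k$, because $2j-1\ge j+1$ for $j\ge 2$; so in fact $N_0\ge 2Q-n\ge Q+n-k=N_R$, which even improves \eqref{eq:boundRiff} (the two bounds coincide for step-two groups).

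The main obstacle is establishing the quasi-homogeneity identity $\mathcal E(t\nu)=\delta_t(\mathcal E(\hat\delta_t\nu))$ with the correct weights: one writes the normal Hamiltonian flow of $G$ in graded coordinates and checks its compatibility with the pair of dilations $(\delta_t,\hat\delta_t)$, the key point being that the Hamiltonian is a function of the horizontal momentum alone. The remaining ingredients — positivity and finiteness of $\mu(\Omega)$, avoidance of the (negligible) cut locus, injectivity of $\mathcal E$ on $A$ and on $tA$ for $t$ small — are routine once $A$ sits close to a short strictly minimizing horizontal geodesic; in particular the possible presence of abnormal minimizers in $G$ is harmless, since the argument uses only normal geodesics lying in the open set where $\mathcal E$ is a local diffeomorphism. (For the \emph{exact} equality $\mathcal N(x)=2Q-n$, which would re-derive the bound via Theorem~\ref{t:bound}, one would in addition need uniform control of $|\det D\mathcal E(\hat\delta_t\nu)|$ over the preimages of \emph{all} admissible $\Omega$, i.e.\ over an unbounded set of covectors; that is the genuinely harder direction, and it is not needed for the lower bound on $N_0$.)
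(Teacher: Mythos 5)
The paper itself gives no proof of this statement (it is quoted from \cite{Rifford}), so there is nothing internal to compare against; your route --- exhibiting one test set whose geodesic homothety contracts at rate $t^{N_R}$ and feeding it into \eqref{eq:MCP} with $K=0$ --- is the natural one, and is the same mechanism as Theorem~\ref{t:bound} combined with Step~2 of the proof of Theorem~\ref{t:corank1}. Your key identity $\exp_e(t\nu)=\delta_t(\exp_e(\hat\delta_t\nu))$, with $\hat\delta_t$ acting as $t^{j-1}$ on $\mathfrak g_j^*$, is correct (it follows from the grading of the left-trivialized Hamiltonian system, and checks out against Lemma~\ref{l:geods}); so are the Jacobian relation $\det D\exp_e(t\nu)=t^{2(Q-n)}\det D\exp_e(\hat\delta_t\nu)$ and the arithmetic $2Q-n=\sum_j(2j-1)\dim\mathfrak g_j\geq Q+n-k$, with equality exactly in step $2$. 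The resulting bound $N_0\geq 2Q-n$ is indeed at least as strong as Rifford's, and is consistent with the paper's examples ($2Q-n=k+3$ for corank $1$, $2Q-n=10$ for Engel) and with $N_0\geq \mathcal N$, since $\mathcal N\geq \sum_j(2j-1)\dim\mathfrak g_j$ by a rearrangement argument of the type used in Section~\ref{s:fat}.

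The gap is in the construction of the test set. First, centering $A$ at a purely horizontal covector $(\lambda_1^0,0,\dots,0)$ is unjustified and can fail outright: for a corank-$1$ group whose defining skew-symmetric matrix is degenerate, Lemma~\ref{l:jacobian} gives $J(p_x,0)=0$ whenever $p_x$ lies in its kernel, and the corresponding curve is abnormal (Remark~\ref{r:abnormal}); no shrinking of the ball repairs a vanishing Jacobian at the center, and you never argue that \emph{some} horizontal covector is non-critical. Second, and more seriously, the claim that all $\gamma_\nu$, $\nu\in A$, are ``strictly minimizing on $[0,1]$'' for a whole open ball $A$ is not ``routine'': under the stated hypotheses one cannot a priori exclude that almost every point is reached only by a strictly abnormal minimizer (this is tied to the Sard problem mentioned in the paper), and cut times are not continuous in the covector, so proximity to one short minimizer does not give minimality of its neighbours. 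The standard repair is Agrachev's theorem on points of smoothness of $y\mapsto d^2(e,y)$: the set of such points is open, dense and non-empty, each is reached by a unique, strictly normal, non-conjugate minimizer, and pulling back a small neighbourhood of one such point by $\exp_e$ produces a bounded open $A$ with every property you need. Once this is done, $A$ need not lie near the horizontal layer at all: for any bounded $A$ the set $\{\hat\delta_t\nu \mid \nu\in\overline A,\ t\in[0,1]\}$ is compact, so $|\det D\exp_e(\hat\delta_t\nu)|$ is uniformly bounded there and your estimate $\mu(\Omega_t)\leq C\,t^{2Q-n}$ goes through unchanged. With that substitution the argument is complete.
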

For Carnot groups, the geodesic dimension $\mathcal{N}(x)=\mathcal{N}$ is clearly constant. In particular $N_0 \geq \mathcal{N}$, by Theorem~\ref{t:bound}. This lower bound improves \eqref{eq:boundRiff}, as a consequence of the following.
\begin{theorem}\label{p:fat}
A Carnot group is ideal if and only if it is fat\footnote{A sub-Riemannian structure $(M,\distr,g)$ is \emph{fat} if for all $x \in M$ and $X \in \distr$, $X(x) \neq0$, then $\distr_x + [X,\distr]_x=T_xM$. It is \emph{ideal} if it is complete and does not admit non-trivial abnormal minimizers.}. In this case, $\mathcal{N} = N_R$. If a Carnot group has step $s>2$, then  $\mathcal{N} > N_R$.
\end{theorem}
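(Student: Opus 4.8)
The plan is to reduce everything to the graded Lie algebra $\mathfrak{g}=\mathfrak{g}_1\oplus\cdots\oplus\mathfrak{g}_s$ of $G$ (step $s$), using that $\distr$ is left-invariant and spanned by $\mathfrak{g}_1$. First I would show that fatness depends only on the first two layers. By left-invariance it suffices to test the definition at $e$, where a direct computation gives $\distr_e+[X,\distr]_e=\mathfrak{g}_1+[X,\mathfrak{g}_1]$ for $0\neq X\in\mathfrak{g}_1$. Since $[X,\mathfrak{g}_1]\subseteq\mathfrak{g}_2$, this equals $\mathfrak{g}$ only when $s\le2$ and $\mathrm{ad}_X\colon\mathfrak{g}_1\to\mathfrak{g}_2$ is onto; dualizing, $\mathrm{ad}_X$ is onto for every $X\neq0$ if and only if the skew form $\omega_\lambda(Y,Y'):=\langle\lambda,[Y,Y']\rangle$ on $\mathfrak{g}_1$ is non-degenerate for every $\lambda\in\mathfrak{g}_2^{\ast}\setminus\{0\}$. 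So $G$ is fat iff $s\le2$ and all the $\omega_\lambda$ are non-degenerate; in particular $s>2$ forces $G$ not to be fat.

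The implication ``fat $\Rightarrow$ ideal'' is then the classical fact that a fat distribution admits no non-trivial abnormal extremal (see e.g.\ \cite{montgomerybook}), hence no non-trivial abnormal minimizer, and $G$ is complete. For ``not fat $\Rightarrow$ not ideal'' I would construct a non-trivial abnormal minimizer as a horizontal line $\gamma_X(t)=\exp(tX)$, $X\in\mathfrak{g}_1$. I use: (i) every such $\gamma_X$ is globally length-minimizing, since its image under the $1$-Lipschitz quotient $\pi\colon G\to G/[G,G]\cong\R^k$ is a Euclidean segment of the same length, whence $d(e,\gamma_X(t))=\ell(\gamma_X|_{[0,t]})$; and (ii) writing the Pontryagin adjoint system in Lie–Poisson form on $\mathfrak{g}^{\ast}$ (where the lift solves $\lambda(t)=e^{t\,\mathrm{ad}^{\ast}_X}\lambda_0$), the curve $\gamma_X$ carries a non-trivial abnormal lift if and only if $V_X:=\sum_{j\ge0}\mathrm{ad}_X^{j}\mathfrak{g}_1$ is a proper subspace of $\mathfrak{g}$. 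If $s\le2$ and $G$ is not fat, some $\omega_\lambda$ is degenerate and any $0\neq X$ in its radical has $\mathrm{ad}_X\mathfrak{g}_1\subsetneq\mathfrak{g}_2$, so $V_X=\mathfrak{g}_1+\mathrm{ad}_X\mathfrak{g}_1\subsetneq\mathfrak{g}$ and $\gamma_X$ is the required abnormal minimizer (also a normal one, hence not strictly abnormal, in accordance with Remark~\ref{r:ideal}). The case $s\ge3$ is the main obstacle: $V_X=\mathfrak{g}$ for a fixed $X\neq0$ forces $\mathrm{ad}_X^{i-1}\mathfrak{g}_1=\mathfrak{g}_i$ for all $i$, hence $\dim\mathfrak{g}_2\le\dim\mathfrak{g}_1-1$ (as $X\in\ker\mathrm{ad}_X|_{\mathfrak{g}_1}$) and $\dim\mathfrak{g}_2\ge\dim\mathfrak{g}_3\ge\cdots\ge\dim\mathfrak{g}_s\ge1$; if this held for \emph{every} $X\neq0$, then $X\mapsto\mathrm{ad}_X|_{\mathfrak{g}_{s-1}}$ would be an injection of $\mathfrak{g}_1$ into the set of \emph{surjective} homomorphisms $\mathfrak{g}_{s-1}\to\mathfrak{g}_s$, which I expect to exclude by a bound on the dimension of a linear space of everywhere-surjective homomorphisms between the top layers (Radon–Hurwitz type in the square case, via the codimension of the rank-degeneracy locus in general), conflicting with $\dim\mathfrak{g}_1>\dim\mathfrak{g}_{s-1}$. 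This yields $X\neq0$ with $V_X\neq\mathfrak{g}$, hence a non-trivial abnormal minimizer and the failure of idealness.

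It remains to compare dimensions. Put $d_i=\dim\mathfrak{g}_i$, so $n=\sum_i d_i$, $k=d_1$, $Q=\sum_i i\,d_i$, and $N_R=Q+n-k=\sum_i(i+1)d_i-d_1$. By the formula for the geodesic dimension of a Carnot group (\cite{ABR-Curvature}; for corank $1$ it is also implicit in the proof of Theorem~\ref{t:corank1}), $\mathcal{N}=\sum_i(2i-1)d_i$. Hence
\[
\mathcal{N}-N_R=\sum_i\big((2i-1)-(i+1)\big)d_i+d_1=\sum_i(i-2)d_i+d_1=\sum_{i\ge3}(i-2)\,d_i\ \ge\ 0,
\]
with equality iff $d_i=0$ for all $i\ge3$, i.e.\ iff $s\le2$. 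Since fatness forces $s\le2$, in the ideal case (equivalently, $G$ fat) we get $\mathcal{N}=N_R$; and if $s>2$ then $\mathcal{N}>N_R$ strictly.

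In summary, the only delicate point is the step $s\ge3$ part of ``not fat $\Rightarrow$ not ideal'', where the bare failure of strong bracket generation must be upgraded to an honest abnormal \emph{minimizer} via the linear-algebraic input on the upper layers sketched above; everything else is an elementary Lie-algebra computation, the one-parameter-subgroup/submetry argument, or an appeal to \cite{montgomerybook} and \cite{ABR-Curvature}.
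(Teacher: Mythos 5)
Your architecture is essentially the paper's: characterize abnormality of the horizontal one-parameter subgroup $\gamma_X(t)=\exp(tX)$ by the condition that $V_X=\sum_{j\ge0}\mathrm{ad}_X^j\,\mathfrak{g}_1$ be a proper subspace of $\mathfrak{g}$ (this is the paper's Lemma on Lagrange multipliers for $\gamma_X$), note that $\gamma_X$ is always minimizing (your submetry argument via $G\to G/[G,G]$ even gives global minimality, where the paper settles for local minimality from normality), and then produce such an $X$ whenever $G$ is not fat. The genuine gap is exactly the point you flag, the step $s\ge 3$ case, and your proposed tool does not work as stated: bounding a real linear space of everywhere-surjective maps $\mathfrak{g}_{s-1}\to\mathfrak{g}_s$ by the codimension of the rank-degeneracy locus fails over $\R$ (the singular $2\times 2$ matrices have codimension $1$, yet $\spn\{I,J\}$ avoids them), and Radon--Hurwitz only covers the square case. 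The fact you need is nonetheless true with a one-line proof: if every nonzero element of a subspace $L\subseteq\mathrm{Hom}(\R^a,\R^b)$ is surjective, transpose to get injective maps $\R^b\to\R^a$ and evaluate at a fixed nonzero covector; this embeds $L$ linearly into $\R^a$, so $\dim L\le a=\dim\mathfrak{g}_{s-1}<\dim\mathfrak{g}_1$, the desired contradiction. The paper instead never leaves the bottom three layers: assuming $\mathrm{ad}_X\mathfrak{g}_1=\mathfrak{g}_2$ for all $X\neq0$ (so $\dim\mathfrak{g}_2\le\dim\mathfrak{g}_1-1$), it takes $\lambda$ supported on a line in $\mathfrak{g}_3$ and finds $0\neq X\in\bigcap_{i}\ker\bigl(\lambda\circ\mathrm{ad}_{Y_i}|_{\mathfrak{g}_1}\bigr)$ over a basis $Y_i$ of $\mathfrak{g}_2$, a space of dimension $\ge\dim\mathfrak{g}_1-\dim\mathfrak{g}_2\ge1$; then $\langle\lambda,\mathrm{ad}_X^2\mathfrak{g}_1\rangle=\langle\lambda,\mathrm{ad}_X\mathfrak{g}_2\rangle=0$ and all other layers vanish for degree reasons. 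Either route closes the gap, but as written yours is a conjecture, not a proof.

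A second, smaller problem: you invoke ``$\mathcal{N}=\sum_i(2i-1)\dim\mathfrak{g}_i$'' as the formula for the geodesic dimension of a Carnot group. That is not the formula of \cite{ABR-Curvature}: the correct one is $\mathcal{N}=\sum_i(2i-1)(k_i^{max}-k_{i-1}^{max})$ in terms of the \emph{maximal geodesic growth vector}, which satisfies $k_i^{max}\le\dim\distr^i$ but need not coincide with the growth vector of the distribution, and whose step $m$ can exceed $s$. Your conclusions survive because (a) in the fat case any non-trivial geodesic has $\mathcal{F}^2_\lambda=\distr_x+[\dot\gamma,\distr]_x=T_xM$, so the geodesic growth vector really is $(k,n)$ and $\mathcal{N}=3n-2k=N_R$ holds as an equality; and (b) for $s>2$ the inequality $k_i^{max}\le\dim\distr^i$ pushes weight in the sum to higher indices, giving $\mathcal{N}\ge\sum_i(2i-1)\dim\mathfrak{g}_i$, which is all your computation needs for strictness. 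But you should argue it this way (the paper does, via a term-by-term comparison of the two rearranged sums) rather than asserting the unjustified equality.
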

\begin{rmk}
Since fat Carnot groups do not admit non-trivial abnormal curves, the first part of Theorem~\ref{p:fat} can be restated as follows: a Carnot group admits a non-trivial abnormal curve if and only if it admits a non-trivial abnormal minimizer  (see Section~\ref{s:preliminaries}).
\end{rmk}
\begin{example}[Engel group]\label{ex:engel}
Consider the Carnot group in dimension $4$, generated by the following global orthonormal left-invariant vector fields in coordinates $(x_1,x_2,x_3,x_4) \in \R^4$
\begin{equation}
X_1 = \partial_1, \qquad X_2 = \partial_2 + x_1 \partial_3 + x_1 x_2 \partial_4.
\end{equation}
The Engel group is a metric space with negligible cut loci (see Remark~\ref{r:Engelcut}). It has rank $2$, step $3$, dimension $4$ and growth vector $(2,3,4)$. Its Hausdorff dimension is $Q = 7$. The geodesic dimension is $\mathcal{N} = 10$ (see Section~\ref{s:Engel}), while $N_R = 9$. This is the lowest dimensional Carnot group where $\mathcal{N}>N_R$.

Checking whether the Engel group satisfies the $\mathrm{MCP}(0,\mathcal{N})$ should be possible, at least in principle, as expressions for the Jacobian determinant are known \cite{AS-Engel}.
\end{example}

\subsection{Open problems}
As a consequence of the formula for $\mathcal{N}(x)$ in the (sub-)Riemannian setting (see Section~\ref{s:geod}), for any corank $1$ Carnot group we have
\begin{equation}
\mathcal{N} = k+3.
\end{equation}
Thus, Theorem~\ref{t:corank1} can be restated saying that \emph{for any corank $1$ Carnot group, the curvature exponent is equal to the geodesic dimension}. Moreover, for $\mathbb{H}_{2d+1}$, this gives $\mathcal{N} =2d+3$, and coincides with the ``mysterious'' integer originally found by Juillet.

The class of corank 1 Carnot groups includes non-ideal structures (see Remark~\ref{r:ideal}). We do not know whether other non-ideal Carnot groups enjoy some $\mathrm{MCP}(0,N)$. It is not even known whether general Carnot groups have negligible cut loci (this is related with the Sard conjecture in sub-Riemannian geometry \cite{RT-MorseSard,DMOPV-Sard}). However, if they do, it is natural to expect the curvature exponent to be equal to the curvature dimension. 

\begin{conjecture}
Let $(X,d,\mu)$ be a Carnot group. Assume that it has negligible cut loci. Then the geodesic dimension coincides with the curvature exponent.
\end{conjecture}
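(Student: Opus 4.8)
The plan is to establish the missing inequality $N_0 \leq \mathcal N$, since $N_0 \geq \mathcal N$ is already provided by Theorem~\ref{t:bound}; equivalently, one must show that a Carnot group with negligible cut loci satisfies $\mathrm{MCP}(0,N)$ for every $N > \mathcal N$. By left-invariance I center everything at the identity $e$. Under the standing hypothesis, the complement of the image of the normal exponential map $E := \exp_e : \mathcal U \subseteq \mathfrak g^* \to G$, together with the set of points joined to $e$ by more than one minimizer, is contained in the negligible cut locus $\mathcal C(e)$; hence off a negligible set the geodesic homothety is $\Phi_t(y) = E(t\, E^{-1}(y))$, and a change of variables with intermediate Lebesgue measure on $\mathfrak g^*$ gives, for a left-invariant $\mu$,
\[
J\Phi_t(y) = t^{\,n}\,\frac{|\det D_{t\lambda}E|}{|\det D_{\lambda}E|}, \qquad \lambda = E^{-1}(y).
\]
Since $\mathrm{MCP}(0,N)$ reduces to $\mu(\Omega_t) \geq t^N \mu(\Omega)$ for all admissible $\Omega$, the change-of-variables formula reduces the conjecture to the pointwise bound $J\Phi_t(y) \geq t^{\mathcal N}$ for a.e.\ $y$ and all $t \in [0,1]$.

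Second, I would recast this as a monotonicity statement. Writing $\mathcal N = \sum_i (2i-1)d_i$ for the dimensions $d_i = \dim V_i$ of the strata of $\mathfrak g$, set
\[
h_\lambda(t) := t^{\,n-\mathcal N}\,|\det D_{t\lambda}E|.
\]
The target inequality is exactly $h_\lambda(t) \geq h_\lambda(1)$ on $[0,1]$. The factor $|\det D_{t\lambda}E|$ is computed from a fundamental solution of the Hamiltonian linearization (Jacobi fields) along the geodesic $\gamma_\lambda$, and the Carnot grading forces the asymptotics $|\det D_{t\lambda}E| \sim c_\lambda\, t^{\mathcal N - n}$ as $t \to 0^+$; thus $h_\lambda$ extends to a finite positive function on $[0,1]$ with $h_\lambda(0^+) = c_\lambda$, and $\mathcal N$ is precisely its order of vanishing at the singular initial point. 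It therefore suffices to prove that $h_\lambda$ is non-increasing, i.e.
\[
\frac{d}{dt}\log h_\lambda(t) = \frac{n-\mathcal N}{t} + \mathrm{tr}\!\left((D_{t\lambda}E)^{-1}\,\tfrac{d}{dt}D_{t\lambda}E\right) \leq 0,
\]
a lower bound of the form $(\mathcal N - n)/t$ on the trace of the matrix Riccati operator governing the Jacobi fields.

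Third --- and this is the heart --- I would try to propagate the sharp bound from $t \to 0^+$ to all $t \in [0,1]$ using the two invariances special to Carnot groups: the semigroup law $\Phi_s \circ \Phi_t = \Phi_{st}$, which yields the cocycle identity $J\Phi_{st}(y) = J\Phi_s(\gamma_\lambda(t))\,J\Phi_t(y)$, and the homogeneity of the normal geodesic flow under the graded dilations $\delta_s$, intertwining the time rescaling $t \mapsto st$ with the dual dilation $\lambda \mapsto \delta_s^*\lambda$ on $\mathfrak g^*$. Combined, these relate the Riccati trace at time $t$ and covector $\lambda$ to the one at rescaled time and dilated covector, so that in principle the exact leading-order inequality at $t = 0^+$ transports to the whole interval and forces the differential inequality above. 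This step is the main obstacle. In the Riemannian case the analogous bound is a Riccati comparison driven by $\mathrm{Ric} \geq 0$; here the sub-Riemannian curvature blows up like $t^{-2}$ near $t = 0$ and is not sign-definite, so no naive comparison applies, and the higher layers $V_i$ ($i \geq 2$) enter through off-diagonal Riccati terms that the grading controls only to leading order. That the inequality is genuinely delicate, and uses nilpotency in an essential way, is shown by Example~\ref{ex:ex1}, where breaking homogeneity (Riemannian Heisenberg) already gives $N_0 = 5 > 3 = \mathcal N$.

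Finally, I note that the first step is exactly where the negligible-cut-loci hypothesis is consumed: it guarantees that a.e.\ minimizer is normal and unique, so that strictly abnormal minimizers (whose endpoints lie in $\mathcal C(e)$) and points outside the image of $E$ do not affect the Jacobian computation, while abnormal minimizers that are simultaneously normal are harmless. Granting the hypothesis, the sole remaining gap is the global monotonicity of $h_\lambda$ on $[0,1]$, which I expect to be the crux of any proof of the conjecture.
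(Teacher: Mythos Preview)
The statement you address is stated in the paper as a \emph{Conjecture} and is explicitly left open; the paper provides no proof, only the remark that preliminary results ``seem to provide evidence to the above claim for some step 2 Carnot groups.'' There is therefore no paper proof to compare your attempt against.

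Your reduction is sound and is exactly the mechanism behind the paper's proof of the corank-1 case (Theorem~\ref{t:corank1}): left-invariance plus negligible cut loci turns $\mathrm{MCP}(0,N)$ into the pointwise Jacobian inequality $t^{n}\,|\det D_{t\lambda}E|/|\det D_{\lambda}E|\ge t^{N}$, and the small-time asymptotics of $|\det D_{t\lambda}E|$ single out $\mathcal N$ as the sharp exponent. So the framework is correct.

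The gap you flag, however, is not a technicality but the conjecture itself. The monotonicity of $h_\lambda(t)=t^{\,n-\mathcal N}|\det D_{t\lambda}E|$ on $[0,1]$ is equivalent to the statement to be proved, and the transport mechanism you sketch does not close it: the dilation $\delta_s$ intertwines $(\lambda,t)$ with $(\delta_s^*\lambda,t)$ at a \emph{different} covector, so you cannot bootstrap the $t\to 0^+$ bound to all of $[0,1]$ along a fixed $\lambda$; and the cocycle identity for $\Phi_t$ only composes Jacobians at different base points, again not yielding a differential inequality at fixed $\lambda$. In corank~1 the paper does not use any such propagation---it proves the Jacobian inequality by an explicit scalar estimate (Lemma~\ref{l:ineq}, Corollary~\ref{c:jacobineq}) with no known general analogue. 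Your Riccati-trace formulation is reasonable, but as you note the sub-Riemannian curvature is neither bounded nor sign-definite, so no standard comparison applies; this is precisely why the question is open.

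One smaller point: you write $\mathcal N=\sum_i(2i-1)\dim\mathfrak g_i$, i.e.\ using the strata of the distribution. The paper's formula~\eqref{eq:gdformula} is in terms of the \emph{maximal geodesic growth vector} $(k_i^{\max})$, not the distribution growth vector $(d_i)$. In the examples computed in the paper the two happen to coincide, but their equality for an arbitrary Carnot group is not asserted and would itself require an argument.
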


Preliminary results (using sub-Riemannian curvature techniques, in collaboration with D. Barilari) seem to provide evidence to the above claim for some step 2 Carnot groups.

\subsection*{Structure of the paper}
In Section~\ref{s:preliminaries} we collect some preliminaries of sub-Riemannian geometry and Carnot groups. In Section~\ref{s:corank1} we characterize the minimizers of corank $1$ Carnot groups. In Section~\ref{s:proofcorank1}, \ref{s:proofbasicprop}, \ref{s:proofbound} we prove Theorems~\ref{t:corank1}, \ref{t:basicprop}, \ref{t:bound} respectively. In Section~\ref{s:geod} we recall the formula for the geodesic dimension on general sub-Riemannian structures, we prove Theorem~\ref{t:basicprop2} and we discuss the Engel example. In Section~\ref{s:fat} we prove Theorem~\ref{p:fat}.

\subsection*{Acknowledgments}
I warmly thank D. Barilari for many fruitful discussions, and the anonymous referee for many useful comments.
This research was supported by the ERC StG 2009 ``GeCoMethods'', contract n. 239748, by the iCODE institute (research project of the Idex Paris-Saclay), and by the ANR project ``SRGI'' ANR-15-CE40-0018. This research benefited from the support of the ``FMJH Program Gaspard Monge in optimization and operation research'' and from the support to this program from EDF.

\section{Sub-Riemannian geometry} \label{s:preliminaries}
We present some basic results in sub-Riemannian geometry. See \cite{nostrolibro,rifford2014sub,montgomerybook} for reference.

\subsection{Basic definitions} A sub-Rieman\-nian manifold is a triple $(M,\distr,g)$, where $M$ is a smooth, connected manifold of dimension $n \geq 3$, $\distr$ is a vector distribution of constant rank $k \leq n$ and $g$ is a smooth metric on $\distr$. We always assume that the distribution is bracket-generating. A \emph{horizontal curve} $\gamma : [0,1] \to M$ is a Lipschitz continuous path such that $\dot\gamma(t) \in \distr_{\gamma(t)}$ for almost any $t$. Horizontal curves have a well defined \emph{length}
\begin{equation}
\ell(\gamma) = \int_0^1 \sqrt{g(\dot\gamma(t),\dot\gamma(t))}dt.
\end{equation}
The \emph{sub-Rieman\-nian (or Carnot-Carath\'eodory) distance} is defined by:
\begin{equation}
d(x,y) = \inf\{\ell(\gamma)\mid \gamma(0) = x,\, \gamma(1) = y,\, \gamma \text{ horizontal} \}.
\end{equation}
By the Chow-Rashevskii theorem, under the bracket-generating condition, $d: M \times M \to \R$ is finite and continuous. A sub-Rieman\-nian manifold is complete if $(M,d)$ is complete as a metric space. In this case, for any $x,y \in M$ there exists a minimizing geodesic joining the two points. In place of the length $\ell$, one can consider the \emph{energy functional} as
\begin{equation}
J(\gamma) = \frac{1}{2}\int_0^1 g(\dot\gamma(t),\dot\gamma(t)) dt.
\end{equation}
It is well known that, on the space of horizontal curves with fixed endpoints, the minimizers of $J(\cdot)$ coincide with the minimizers of $\ell(\cdot)$ with constant speed. Since $\ell$ is invariant by reparametrization (and in particular we can always reparametrize horizontal curves in such a way that they have constant speed), we do not loose generality in defining \emph{geodesics} as horizontal curves that are locally energy minimizers between their endpoints.

\subsection{Hamiltonian}

We define the \emph{Hamiltonian function} $ H : T^*M \to \R$ as
\begin{equation}
H(\lambda) = \frac{1}{2}\sum_{i=1}^k \langle \lambda,X_i\rangle, \qquad \lambda \in T^*M,
\end{equation}
for any local orthonormal frame $X_1,\ldots,X_k$ for $\distr$. Here $\langle \lambda,\cdot\rangle$ denotes the dual action of covectors on vectors. The cotangent bundle $\pi:T^*M \to M$ is equipped with a natural symplectic form $\sigma$. The Hamiltonian vector field $\vec{H}$ is the unique vector field such that $\sigma(\cdot,\vec{H}) = dH$. In particular, the \emph{Hamilton equations} are
\begin{equation}\label{eq:Ham}
\dot\lambda(t) = \vec{H}(\lambda(t)), \qquad \lambda(t) \in T^*M.
\end{equation}
If $(M,d)$ is complete, any solution of \eqref{eq:Ham} can be extended to a smooth curve for all times.

\subsection{End-point map}

Let $\gamma_u :[0,1] \to M$ be an horizontal curve joining $x$ and $y$. Up to restriction and reparametrization, we assume that the curve has no self-intersections. Thus we can find a smooth orthonormal frame $X_1,\ldots,X_k$ of horizontal vectors fields, defined in a neighborhood of $\gamma_u$. Moreover, there is a \emph{control} $u \in L^\infty([0,1],\R^k)$ such that
\begin{equation}
\dot\gamma_u(t) =  \sum_{i=1}^k u_i(t) X_i(\gamma_u(t)), \qquad \text{a.e. } t \in [0,1].
\end{equation}
Let $\mathcal{U} \subset L^\infty([0,1],\R^k)$ be the open set such that, for $v \in \mathcal{U}$, the solution of
\begin{equation}
\dot\gamma_v(t) = \sum_{i=1}^k v_i(t) X_i(\gamma_v(t)), \qquad \gamma_v(0) = x,
\end{equation}
is well defined for a.e. $t \in [0,1]$. Clearly $u \in \mathcal{U}$. We define the \emph{end-point map} with base $x$ as $E_{x}: \mathcal{U} \to M$ that sends $v$ to $\gamma_v(1)$. The end-point map is smooth on $\mathcal{U}$.

\subsection{Lagrange multipliers}

We can see $J : \mathcal{U} \to \R$ as a smooth functional on $\mathcal{U}$ (we are identifying $\mathcal{U}$ with a neighborhood of $\gamma_u$ in the space of horizontal curves starting from $x$). A minimizing geodesic $\gamma_u$ is a solution of the constrained minimum problem
\begin{equation}
J(v) \to \text{min}, \qquad E_x(v) = y, \qquad v \in \mathcal{U}.
\end{equation}
By the Lagrange multipliers rule, there exists a non-trivial pair $(\lambda_1,\nu)$, such that
\begin{equation}\label{eq:multipliers}
\lambda_1 \circ D_u E_x  = \nu D_u J, \qquad \lambda_1 \in T_y^*M, \qquad\nu \in \{0,1\},
\end{equation}
where $\circ$ denotes the composition and $D$ the (Fr\'echet) differential. If $\gamma_u : [0,1] \to  M$ with control $u \in \mathcal{U}$ is an horizontal curve (not necessarily minimizing), we say that a non-zero pair $(\lambda_1,\nu) \in T_y^*M \times \{0,1\}$ is a \emph{Lagrange multiplier} for $\gamma_u$ if \eqref{eq:multipliers} is satisfied. The multiplier $(\lambda_1,\nu)$ and the associated curve $\gamma_u$ are called \emph{normal} if $\nu = 1$ and \emph{abnormal} if $\nu = 0$. Observe that Lagrange multipliers are not unique, and a horizontal curve may be both normal \emph{and} abnormal. Observe also that $\gamma_u$ is an abnormal curve if and only if $u$ is a critical point for $E_x$. In this case, $\gamma_u$ is also called a \emph{singular curve}. The following characterization is a consequence of the Pontryagin Maximum Principle \cite{Agrabook}.
\begin{theorem}
Let $\gamma_u :[0,1] \to M$ be an horizontal curve joining $x$ with $y$. A non-zero pair $(\lambda_1,\nu) \in T_y^*M \times \{0,1\}$ is a Lagrange multiplier for $\gamma_u$ if and only if there exists a Lipschitz curve $\lambda(t) \in T_{\gamma_u(t)}^*M$ with $\lambda(1) = \lambda_1,$ such that
\begin{itemize}
\item if $\nu = 1$ then $\dot{\lambda}(t) = \vec{H}(\gamma(t))$, i.e.\ it is a solution of Hamilton equations,
\item if $\nu =0 $ then $\sigma(\dot\lambda(t), T_{\lambda(t)} \distr^\perp) = 0$,
\end{itemize}
where $\distr^\perp \subset T^*M$ is the sub-bundle of covectors that annihilate the distribution.
\end{theorem}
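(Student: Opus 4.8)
The plan is to reduce everything to the explicit integral formula for the differential of the end-point map, and then to read off the two regimes from linear symplectic algebra on $T^*M$. Write $h_i(\lambda):=\langle \lambda, X_i\rangle$ for the fibre-linear Hamiltonians associated with the orthonormal frame, so that $H=\tfrac12\sum_{i=1}^k h_i^2$ and $\vec H=\sum_{i=1}^k h_i\,\vec h_i$, where $\vec h_i$ is the Hamiltonian field of $h_i$, characterised by $\sigma(\cdot,\vec h_i)=dh_i$. For a control $u$ with trajectory $\gamma_u$, let $P^u_t\colon T_{\gamma_u(t)}M\to T_yM$ denote the differential at time $t$ of the flow of the reference system, carried to the final time $t=1$. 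The standard variation-of-constants computation gives, for every variation $v\in L^\infty([0,1],\R^k)$,
\begin{equation*}
D_uE_x(v)=\int_0^1 P^u_t\Big(\sum_{i=1}^k v_i(t)\,X_i(\gamma_u(t))\Big)\,dt .
\end{equation*}

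Given a candidate multiplier $\lambda_1\in T_y^*M$, I would define the Lipschitz lift $\lambda(t):=(P^u_t)^*\lambda_1\in T^*_{\gamma_u(t)}M$, which by construction satisfies $\lambda(1)=\lambda_1$ and solves the control-dependent Hamilton equation $\dot\lambda(t)=\sum_{i=1}^k u_i(t)\,\vec h_i(\lambda(t))$ (the cotangent lift of the reference flow). Pairing $\lambda_1$ with the formula above and recalling $D_uJ(v)=\int_0^1\sum_i u_i(t)v_i(t)\,dt$, the multiplier identity \eqref{eq:multipliers} becomes
\begin{equation*}
\int_0^1\sum_{i=1}^k v_i(t)\big(h_i(\lambda(t))-\nu\,u_i(t)\big)\,dt=0,\qquad\forall\, v\in L^\infty,
\end{equation*}
which by the fundamental lemma of the calculus of variations is equivalent to the pointwise relations $h_i(\lambda(t))=\nu\,u_i(t)$ for a.e. $t$ and every $i$. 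This single identity is the heart of the argument, and both implications of the theorem follow by specialising $\nu$.

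If $\nu=1$, then $u_i(t)=h_i(\lambda(t))$, so along the lift $\sum_i u_i(t)\vec h_i(\lambda(t))=\sum_i h_i(\lambda(t))\vec h_i(\lambda(t))=\vec H(\lambda(t))$; hence $\dot\lambda=\vec H(\lambda)$, the Hamilton equations (a bootstrap then shows $\lambda$ is smooth). If $\nu=0$, the relation reads $h_i(\lambda(t))=0$, i.e. $\lambda(t)\in\distr^\perp$, and consequently $\dot\lambda(t)\in T_{\lambda(t)}\distr^\perp$. Here I would use that $\distr^\perp=\{h_1=\dots=h_k=0\}$ is a smooth codimension-$k$ submanifold with $T_\lambda\distr^\perp=\bigcap_i\ker d_\lambda h_i$, whose $\sigma$-orthogonal is exactly $\spn\{\vec h_1(\lambda),\dots,\vec h_k(\lambda)\}$ (using non-degeneracy of $\sigma$). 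Since $\dot\lambda=\sum_i u_i\vec h_i(\lambda)$ lies in this span, it is $\sigma$-orthogonal to $T_\lambda\distr^\perp$, which is precisely the stated condition $\sigma(\dot\lambda(t),T_{\lambda(t)}\distr^\perp)=0$.

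For the converse I would run the same computation backwards. A lift with $\lambda(t)\in T^*_{\gamma_u(t)}M$ and $\dot\lambda=\vec H(\lambda)$ projects to $\dot\gamma_u=\sum_i h_i(\lambda)X_i$, forcing $u_i=h_i(\lambda)$ because the $X_i$ form a frame; this recovers $\dot\lambda=\sum_i u_i\vec h_i(\lambda)$, hence $\lambda(t)=(P^u_t)^*\lambda_1$, and reverses the integral identity to \eqref{eq:multipliers} with $\nu=1$. In the abnormal case a lift valued in $\distr^\perp$ (as the condition $\sigma(\dot\lambda,T_\lambda\distr^\perp)=0$ presupposes) has $\dot\lambda\in T_\lambda\distr^\perp$, and the condition places $\dot\lambda$ in the characteristic space $T_\lambda\distr^\perp\cap\spn\{\vec h_i(\lambda)\}$; writing $\dot\lambda=\sum_i c_i\vec h_i(\lambda)$ and projecting identifies $c_i=u_i$, so again $\lambda(t)=(P^u_t)^*\lambda_1$ and the identity yields \eqref{eq:multipliers} with $\nu=0$ (the required non-triviality $\lambda_1\neq0$ propagates to $\lambda(t)\neq0$ since $P^u_t$ is invertible). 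The main obstacle is purely technical: establishing the variation-of-constants formula for $D_uE_x$ rigorously for merely $L^\infty$ controls, and checking that $\lambda(t)=(P^u_t)^*\lambda_1$ genuinely solves the cotangent-lifted equation — the standard but delicate points where one must handle the non-autonomous linear flow and the pull-back of covectors with care.
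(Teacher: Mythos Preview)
The paper does not prove this theorem: it is stated as a known consequence of the Pontryagin Maximum Principle, with a citation to \cite{Agrabook}, and no proof is given. So there is no ``paper's own proof'' to compare against.

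Your argument is the standard textbook derivation (essentially what one finds in Agrachev--Sachkov or in \cite{nostrolibro}): express $D_uE_x$ via the variation-of-constants formula, pull back $\lambda_1$ along the reference flow to get the lift $\lambda(t)$, and reduce the multiplier rule to the pointwise identities $h_i(\lambda(t))=\nu\,u_i(t)$. The $\nu=1$ case then collapses the control-dependent Hamiltonian $\sum_i u_i\vec h_i$ to $\vec H$, and the $\nu=0$ case places $\lambda(t)$ in $\distr^\perp$ with $\dot\lambda$ in the symplectic orthogonal of $T_{\lambda}\distr^\perp$. This is correct in outline and matches the cited references. One small point worth tightening in the converse abnormal direction: you should state explicitly that the hypothesis $\lambda(t)\in T^*_{\gamma_u(t)}M$ (i.e.\ $\pi(\lambda(t))=\gamma_u(t)$) is part of the data, since it is what forces $\pi_*\dot\lambda=\dot\gamma_u$ and hence $c_i=u_i$; the symplectic-orthogonal condition alone does not pin down the base projection.
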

In the first (resp. second) case, $\lambda(t)$ is called a \emph{normal} (resp. \emph{abnormal}) \emph{extremal}. Normal extremals are   integral curves $\lambda(t)$ of $\vec{H}$. As such, they are smooth, and characterized by their \emph{initial covector} $\lambda = \lambda(0)$. A geodesic is normal (resp. abnormal) if admits a normal (resp. abnormal) extremal. On the other hand, it is well known that the projection $\gamma_\lambda(t) = \pi(\lambda(t))$ of a normal extremal is locally minimizing, hence it is a normal geodesic (see \cite[Chapter 4]{nostrolibro} or \cite[Theorem 1.5.7]{montgomerybook}). The \emph{exponential map} at $x \in M$ is the map
\begin{equation}
\exp_x : T_x^*M \to M,
\end{equation}
which assigns to $\lambda \in T_x^*M$ the final point $\pi(\lambda(1))$ of the corresponding normal geodesic. The curve $\gamma_\lambda(t):=\exp_x(t \lambda)$, for $t \in [0,1]$, is the normal geodesic corresponding to $\lambda$, which has constant speed $\|\dot\gamma_\lambda(t)\| = \sqrt{2H(\lambda)}$ and length $\ell(\gamma|_{[t_1,t_2]}) = \sqrt{2H(\lambda)}(t_2-t_1)$.

\begin{definition}\label{d:ideal}
A sub-Riemannian structure $(M,\distr,g)$ is \emph{ideal} if it is complete and does not admit non-trivial abnormal minimizers.
\end{definition}
\begin{definition}\label{d:fat}
A sub-Riemannian structure $(M,\distr,g)$ is \emph{fat} (or \emph{strong bracket-generating}) if for all $x \in M$ and $X \in \distr$, $X(x) \neq0$, then $\distr_x + [X,\distr]_x=T_xM$. 
\end{definition}
The definition of ideal structures appears in \cite{Rifford,rifford2014sub}, in the equivalent language of singular curves. We stress that fat sub-Riemannian structures admit no non-trivial abnormal curves (see \cite[Section 5.6]{montgomerybook}). In particular, complete fat structures are ideal.

\subsection{Carnot groups}
A \emph{Carnot group $(G,\star)$ of step $s$} is a connected, simply connected Lie group of dimension $n$, such that its Lie algebra $\mathfrak{g} = T_e G$ is stratified of step $s$, that is
\begin{equation}
\mathfrak{g} = \mathfrak{g}_1 \oplus \ldots \oplus \mathfrak{g}_s,
\end{equation}
with
\begin{equation}
[\mathfrak{g}_1,\mathfrak{g}_j] = \mathfrak{g}_{1+j},\quad \forall 1\leq j\leq s, \quad \mathfrak{g}_s \neq \{0\}, \quad \mathfrak{g}_{s+1} =\{0\}.
\end{equation}
The \emph{group exponential map} $\mathrm{exp}_{G} : \mathfrak{g} \to G$ associates with $V \in \mathfrak{g}$ the element $\gamma_V(1)$, where $\gamma_V: [0,1] \to G$ is the integral line, starting at $\gamma_V(0)=e$, of the left invariant vector field associated with $V$. Since $G$ is simply connected and $\mathfrak{g}$ is nilpotent, $\mathrm{exp}_G$ is a smooth diffeomorphism. Thus, the choice of a basis of $\mathfrak{g}$ induces coordinates on $G \simeq \R^{n}$, which are called \emph{exponential coordinates}.

Let $\distr$ be the left-invariant distribution generated by $\mathfrak{g}_1$, with a left-invariant scalar product $g$.  This defines a sub-Riemannian structure $(G,\distr,g)$ on the Carnot group. For $x \in G$, we denote with $L_x(y) := x \star y$ the left translation. The map $L_x : G \to G$ is a smooth isometry.  Any Carnot group, equipped with the Carnot-Carath\'eodory distance $d$ and the Lebesgue measure $\mu$ of $G=\R^n$ is a complete metric measure space $(X,d,\mu)$. Haar, Popp, Lebesgue and the top-dimensional Hausdorff measures are left-invariant and proportional.

\section{Corank 1 Carnot groups}\label{s:corank1}

A corank $1$ Carnot group is a Carnot groups of step $s=2$, with $\dim\mathfrak{g}_1 = k$ and $\dim \mathfrak{g}_2 = 1$. In exponential coordinates $(x,z)$ on $\R^k \times \R$, they are generated by the following set of global orthonormal left-invariant frames
\begin{equation}
X_i = \partial_{x_i} - \frac{1}{2}\sum_{j=1}^{k} A_{ij} x_j \partial_z, \qquad i =1,\ldots, k,
\end{equation}
where $A$ is a $k\times k$ skew symmetric matrix.  Observe that
\begin{equation}
[X_i,X_j] = A_{ij} \partial_z, \qquad  i,j = 1,\ldots,k.
\end{equation}
Let $0<\alpha_1\leq \ldots \leq \alpha_d$ be the non-zero singular values of $A$. In particular, $\dim \ker A = k-2d$. Up to an orthogonal change of coordinates, we can assume that
\begin{equation}
A = \begin{pmatrix}
\mathbbold{0}	&			&			\\
& \alpha_1 J 	& 			& 		 \\
& 			& \ddots 	&		 \\
& 			&			& \alpha_d J  \\
\end{pmatrix}, \qquad J = \begin{pmatrix}
0 & 1 \\
-1 & 0
\end{pmatrix}.
\end{equation}
The first zero block has dimension $k-2d$, while each other diagonal block is $2 \times 2$. We split the coordinate $x = (x^0,x^1,\ldots,x^d)$, where $x^0 \in \R^{k-2d}$ and $x^i \in \R^2$, for $i=1,\ldots,d$. 

If $A$ has trivial kernel (in particular, $k$ is even), we are in the case of a \emph{contact Carnot group}, and there are no non-trivial abnormal minimizers. However, when $A$ has a non-trivial kernel, then non-trivial abnormal minimizers appear. To prove Theorem~\ref{t:bound}, we need a complete characterization of the minimizing geodesics on a general corank $1$ Carnot group. We extend the results of \cite{ABB-Hausdorff}, where the case of a non-degenerate $A$ is considered.

\subsection{Characterization of minimizers}

On any corank $1$ sub-Riemannian distribution, all minimizing geodesics are normal (this is true for any step $2$ distribution). In particular, they can be recovered by solving Hamilton equations. By left-invariance, it is sufficient to consider geodesics starting from the identity $e = (0,0)$. Any covector $\lambda \in T^*_eG$ has coordinates $(p_x,p_z)$, where we split $p_x = (p_x^0,p_x^1,\ldots,p_x^d)$.

\begin{lemma}\label{l:geods}
The exponential map $\exp_e :T^*_e G \to G$ of a Corank 1 Carnot group is
\begin{equation}
\exp_e(p_x^0,p_x^1,\ldots,p_x^d,p_z) = (x^0,x^1,\ldots,x^d,z),
\end{equation}
where, for all $i=1,\ldots,d$ we have
\begin{align}
x^0 & = p_x^0, \\
x^i & = \left(\frac{\sin(\alpha_i p_z)}{\alpha_i p_z}I + \frac{\cos(\alpha_i p_z )-1}{\alpha_i p_z} J\right) p_x, \\
z & =\sum_{i=1}^d \|p_x^i\|^2\frac{\alpha_i p_z - \sin(\alpha_i p_z)}{2 \alpha_i p_z^2}.
\end{align}
If $p_z = 0$, one must consider the limit $p_z \to 0$, that is $\exp_e(p_x,0) = (p_x,0)$.
\end{lemma}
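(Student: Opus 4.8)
The plan is to integrate Hamilton's equations \eqref{eq:Ham} explicitly with initial point $e = (0,0)$. Write a covector as $\lambda = (x, z, p_x, p_z) \in T^*G$ and set $h_i := \langle \lambda, X_i \rangle = p_{x_i} - \tfrac12 \sum_{j} A_{ij} x_j\, p_z$, so that $H = \tfrac12 \sum_{i=1}^k h_i^2$. First I would write down the equations of motion. Since $H$ is independent of $z$, the momentum $p_z$ is conserved along the flow. Moreover $\dot x_i = \partial_{p_{x_i}} H = h_i$, and, using $\dot p_{x_i} = -\partial_{x_i} H$ together with the skew-symmetry of $A$, a short computation gives $\dot h_i = -p_z \sum_j A_{ij} h_j$; that is, $\dot h = -p_z A\, h$. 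Hence $h(t) = e^{-t p_z A} h(0)$, and $x(0) = 0$ forces $h(0) = p_x$.

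Next I would exploit the normal form of $A$. On $\ker A$ the vector $h^0$ is constant and equal to $p_x^0$, and $x^0(t) = t\,p_x^0$; evaluated at $t=1$ this gives $x^0 = p_x^0$. On the $i$-th $2 \times 2$ block, $e^{-t p_z \alpha_i J} = \cos(t p_z \alpha_i) I - \sin(t p_z \alpha_i) J$ since $J^2 = -I$, so $h^i(t) = e^{-t p_z \alpha_i J} p_x^i$; integrating and using that $\tfrac{1}{p_z\alpha_i} J(e^{-u p_z \alpha_i J} - I)$ is an antiderivative of $e^{-u p_z\alpha_i J}$, one gets $x^i(t) = \tfrac{1}{p_z\alpha_i} J(e^{-t p_z \alpha_i J} - I) p_x^i = \bigl(\tfrac{\sin(t p_z\alpha_i)}{p_z\alpha_i} I + \tfrac{\cos(t p_z\alpha_i) - 1}{p_z\alpha_i} J\bigr) p_x^i$, which at $t=1$ is the claimed formula.

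For the vertical coordinate I would compute $\dot z = \partial_{p_z} H = -\tfrac12 \sum_{j,l} h_j A_{jl} x_l = -\tfrac12 \sum_{i=1}^d \alpha_i\, (h^i)^{\top} J\, x^i$, since $A$ vanishes on $\ker A$. Substituting the expressions above for $h^i(s)$ and $x^i(s)$ and using $J^{\top} = -J$, $J^2 = -I$, $(p_x^i)^{\top} J p_x^i = 0$, and $(e^{\theta J})^{\top} = e^{-\theta J}$, the integrand simplifies to $\tfrac{1}{2 p_z} \|p_x^i\|^2 \bigl(1 - \cos(s p_z \alpha_i)\bigr)$; integrating in $s$ from $0$ to $t$ and setting $t = 1$ yields $z = \sum_{i=1}^d \|p_x^i\|^2 \tfrac{\alpha_i p_z - \sin(\alpha_i p_z)}{2 \alpha_i p_z^2}$. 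Finally, the case $p_z = 0$ is handled either by passing to the limit in these formulas or directly: then $h \equiv p_x$, $x(t) = t p_x$, and $\dot z = -\tfrac12 p_x^{\top} A x = 0$ by skew-symmetry, so $\exp_e(p_x, 0) = (p_x, 0)$.

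I do not expect a genuine obstacle here: the statement is proved by an explicit, if somewhat lengthy, integration. The only point requiring a little care is organising the computation of $\dot z$ so that the block decomposition of $A$ and the elementary identities for $J$ are used from the start, so that the closed form emerges cleanly rather than through a coordinate-by-coordinate calculation.
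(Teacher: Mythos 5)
Your proposal is correct and follows essentially the same route as the paper: write the Hamiltonian in terms of the linear-on-fibers functions $h_i=\langle\lambda,X_i\rangle$, observe $\dot h=-p_zAh$ and $\dot p_z=0$, exponentiate block-by-block using the normal form of $A$, and integrate $\dot x=h$ and $\dot z=-\tfrac12 h^{\top}Ax$ to obtain the closed formulas. The computations (including the simplification of the $z$-integrand to $\tfrac{1}{2p_z}\|p_x^i\|^2(1-\cos(s\alpha_i p_z))$) match the paper's proof of Lemma~\ref{l:geods}.
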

\begin{rmk}[Abnormal geodesics]\label{r:abnormal}
A non-zero covector $\lambda = (p_x,p_z)$ such that $Ap_x = 0$, that is of the form $(p_x^0,0,\ldots,0,p_z)$ corresponds to an abnormal geodesic. A way to see this is to observe that there is an infinite number of initial covectors giving the same geodesic
\begin{equation}
\exp_e(t p_x^0,0,\ldots,0,t p_z) = (t p_x^0,0, \ldots,0,0), \qquad \forall p_z \in \R.
\end{equation}
A direct analysis of the end-point map shows that abnormal geodesic are all of this type.
\end{rmk}
\begin{proof}
Let $h_x = (h_1,\ldots,h_{k}) : T^*G \to \R^k$ and $h_z : T^*G \to \R$, where $h_i(\lambda):= \langle \lambda, X_i\rangle$, for $i=1,\ldots,k$ and $h_z(\lambda) := \langle \lambda, \partial_z\rangle$. Thus, $H = \frac{1}{2}\|h_x\|^2$. Hamilton equations are
\begin{equation}
\dot{h}_z = 0, \qquad
\dot{h}_x = -h_z A h_x, \qquad \dot{x} = h_x, \qquad \dot{z} = -\tfrac{1}{2}h_x^* A x,
\end{equation}
where, without risk of confusion, the dot denotes the derivative with respect to $t$. We have
\begin{equation}
h_z(t) = p_z, \qquad h_x(t) = e^{-p_z A t} p_x.
\end{equation}
The equations for $(x,z)$ can be easily integrated, using the block-diagonal structure of $A$. Split $h_x = (h_x^0,h_x^1,\ldots,h_x^d)$, with $h_x^0 \in \R^{k-2d}$ and $h_x^i \in \R^2$ for $i=1,\ldots,d$. We obtain
\begin{equation}
h_x^0(t) = p_x^0, \qquad h_x^i(t) = [\cos(\alpha_i p_z t) I - \sin(\alpha_i p_z t) J]p_x^i,
\end{equation}
where $I$ is the $2\times 2$ identity matrix. Integrating the above on $[0,t]$, we obtain
\begin{equation}
x^0(t) = p_x^0 t, \qquad x^i(t) = \left(\frac{\sin(\alpha_i p_z t)}{\alpha_i p_z} I + \frac{\cos(\alpha_i p_z t) - 1}{\alpha_i p_z} J\right)p_x^i.
\end{equation}
Finally, for the coordinate $z$ we obtain
\begin{align}
z & = -\frac{1}{2}\int_0^1 h_x^*(s) A x(s) ds  =  -\frac{1}{2} \sum_{i=1}^d \int_0^1 h_x^i(s)^* \alpha_i J x^i(s) ds \\
& =\frac{1}{2p_z}\sum_{i=1}^d \|p_x^i\|^2 \int_0^1\left(1 - \cos(\alpha_i p_z s)\right)ds =\sum_{i=1}^d \|p_x^i\|^2 \left(\frac{\alpha_i p_z - \sin(\alpha_i p_z)}{2\alpha_i p_z^2}\right). \qedhere
\end{align}
\end{proof}

\begin{lemma}\label{l:jacobian}
The Jacobian determinant of the exponential map is
\begin{equation}
J(p_x,p_z) = \frac{2^{2d}}{\alpha^2 p_z^{2d+2}}\sum_{i=1}^d \|p_x^i\|^2 \prod_{j\neq i} \sin\left(\frac{\alpha_j p_z}{2}\right)^2 \sin\left(\frac{\alpha_i p_z}{2}\right)\left(\sin\left(\frac{\alpha_i p_z}{2}\right)-\frac{\alpha_i p_z}{2}\cos\left(\frac{\alpha_i p_z}{2}\right)\right),
\end{equation}
where $\alpha = \prod_{i=1}^d \alpha_i$ is the product of the non-zero singular values of $A$. If $p_z = 0$, the formula must be taken in the limit $p_z \to 0$. In particular $J(p_x,0)= \frac{1}{12} \sum_{i=1}^d \|p_x^i\|^2 \alpha_i^2$.
\end{lemma}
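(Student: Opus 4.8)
The plan is to differentiate the explicit formula for $\exp_e$ from Lemma~\ref{l:geods} in exponential coordinates and exploit the block structure of the resulting matrix. First, since $x^0 = p_x^0$ and this coordinate depends on no other variable, the Jacobian matrix of $\exp_e$ is block diagonal with a $(k-2d)\times(k-2d)$ identity block in the $x^0/p_x^0$ corner; hence $J(p_x,p_z)$ equals the Jacobian determinant of the reduced map $F\colon\R^{2d+1}\to\R^{2d+1}$, $(p_x^1,\dots,p_x^d,p_z)\mapsto(x^1,\dots,x^d,z)$. Here $x^i = R_i(p_z)\,p_x^i$ with $R_i(p_z) = \frac{\sin(\alpha_i p_z)}{\alpha_i p_z}I + \frac{\cos(\alpha_i p_z)-1}{\alpha_i p_z}J$, and $z = \sum_i \|p_x^i\|^2\, c_i(p_z)$ with $c_i(p_z) = \frac{\alpha_i p_z - \sin(\alpha_i p_z)}{2\alpha_i p_z^2}$. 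Then $dF$ has the block form: the $2d\times 2d$ block $\partial(x^1,\dots,x^d)/\partial(p_x^1,\dots,p_x^d)$ equals $\operatorname{diag}(R_1(p_z),\dots,R_d(p_z))$; the last column $\partial F/\partial p_z$ has $i$-th $2$-vector block $R_i'(p_z)p_x^i$ and last entry $\sum_i\|p_x^i\|^2 c_i'(p_z)$; and the last row $\partial z/\partial(p_x^1,\dots,p_x^d)$ has $i$-th block $2c_i(p_z)\,(p_x^i)^{\mathsf T}$.

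Next I would expand the determinant by the Schur complement formula with respect to the invertible block $\operatorname{diag}(R_1,\dots,R_d)$ (all matrices below are evaluated at $p_z$), obtaining
\[
J(p_x,p_z)=\Big(\prod_{j=1}^d\det R_j\Big)\Big(\sum_i\|p_x^i\|^2 c_i'-\sum_i 2c_i\,(p_x^i)^{\mathsf T}R_i^{-1}R_i'\,p_x^i\Big).
\]
Two elementary facts then finish the algebra. Writing $R_i=aI+bJ$ with $a=\frac{\sin(\alpha_i p_z)}{\alpha_i p_z}$, $b=\frac{\cos(\alpha_i p_z)-1}{\alpha_i p_z}$, one has $\det R_i=a^2+b^2=\frac{2(1-\cos(\alpha_i p_z))}{(\alpha_i p_z)^2}=\frac{4\sin^2(\alpha_i p_z/2)}{(\alpha_i p_z)^2}$, whence $\prod_j\det R_j=\frac{2^{2d}\prod_j\sin^2(\alpha_j p_z/2)}{\alpha^2 p_z^{2d}}$. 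More importantly, $R_i^{-1}R_i'$ is again of the form $\tilde a I+\tilde b J$, and since $J^{\mathsf T}=-J$ we have $(p_x^i)^{\mathsf T}Jp_x^i=0$; so only the scalar part survives, and a one-line computation gives $(p_x^i)^{\mathsf T}R_i^{-1}R_i'\,p_x^i=\frac{(\det R_i)'}{2\det R_i}\|p_x^i\|^2$. Substituting, the second bracket collapses to $\sum_i\|p_x^i\|^2\big(c_i'-c_i\tfrac{(\det R_i)'}{\det R_i}\big)=\sum_i\|p_x^i\|^2\,\det R_i\cdot\big(\tfrac{c_i}{\det R_i}\big)'$.

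It remains to evaluate $\frac{c_i}{\det R_i}$ and its derivative in closed form. With $u=\alpha_i p_z/2$ and $\sin(\alpha_i p_z)=2\sin u\cos u$ one finds $\frac{c_i}{\det R_i}=\frac{\alpha_i(2u-\sin 2u)}{8\sin^2 u}$, and a routine differentiation (the half-angle substitution makes every term elementary) yields $\det R_i\cdot\big(\tfrac{c_i}{\det R_i}\big)'=\frac{\sin u-u\cos u}{p_z^2\sin u}$. Plugging this and the value of $\prod_j\det R_j$ into the expression for $J$, multiplying the $\prod_j\sin^2(\alpha_j p_z/2)$ factor by the remaining term and cancelling one power of $\sin(\alpha_i p_z/2)$, gives exactly the stated formula. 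Finally, $J(p_x,0)=\frac1{12}\sum_i\|p_x^i\|^2\alpha_i^2$ follows by letting $p_z\to 0$ and using $\sin u-u\cos u=\frac{u^3}{3}+O(u^5)$ together with $\sin^2(\alpha_j p_z/2)\sim(\alpha_j p_z/2)^2$. I expect the only genuinely delicate point to be the bookkeeping of the powers of $p_z$, $\alpha_i$ and $\sin(\alpha_i p_z/2)$ through these cancellations; the structural steps — the block-diagonal reduction and the vanishing $(p_x^i)^{\mathsf T}Jp_x^i=0$ of the skew part — are immediate.
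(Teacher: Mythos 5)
Your proof is correct, and I have checked the key identities: the block-diagonal splitting off of the $x^0$ coordinates, the Schur complement expansion, the collapse of $(p_x^i)^{\mathsf T}R_i^{-1}R_i'\,p_x^i$ to $\tfrac{(\det R_i)'}{2\det R_i}\|p_x^i\|^2$, and the evaluation $\det R_i\cdot(c_i/\det R_i)'=\frac{\sin u-u\cos u}{p_z^2\sin u}$ with $u=\alpha_i p_z/2$ all check out and reproduce the stated formula and its $p_z\to 0$ limit. The overall strategy is the same as the paper's (differentiate the explicit exponential map of Lemma~\ref{l:geods} and exploit the arrow/block structure of the differential), but the execution differs in two respects. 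The paper uses the cofactor-expansion identity $\det M=\theta\prod_i\det B_i-\sum_i\bigl(\prod_{j\neq i}\det B_j\bigr)v_i^*\,\mathrm{cof}(B_i)\,w_i$, which holds with no invertibility hypothesis, and then substitutes all trigonometric expressions by brute force; you instead use the Schur complement, which formally requires each $R_i$ to be invertible --- at the isolated values $\alpha_ip_z\in 2\pi\mathbb{Z}\setminus\{0\}$ you should add one line saying that both sides are analytic in $p_z\neq 0$ and agree on a dense set, hence everywhere. In exchange, your observation that the skew part dies by $p^{\mathsf T}Jp=0$ and that the surviving scalar is the logarithmic derivative of $\det R_i$, so that the whole bracket becomes $\sum_i\|p_x^i\|^2\det R_i\,(c_i/\det R_i)'$, reduces the paper's page of trigonometric substitutions to the differentiation of a single scalar function; this is a genuinely cleaner piece of bookkeeping and makes the cancellations transparent.
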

\begin{proof}
For any matrix with the following block structure
\begin{equation}
M = \begin{pmatrix}
B & v \\
w^* & \theta
\end{pmatrix},
\end{equation}
where the only constraint is that $\theta \in \R$ is a one-dimensional block, we have 
\begin{equation}
\det(M) = \theta \det(B) - v^* \mathrm{cof}(B) w,
\end{equation}
where $\mathrm{cof}$ denotes the matrix of cofactors. More in general, let
\begin{equation}
M = \begin{pmatrix}
B_0 & & & &  v_0 \\
 & B_1 & & & v_1 \\
 & & \ddots & & \vdots \\
 & & & B_d & v_d \\
w_0^* & w_1^* & \dots & w_d^* & \theta
\end{pmatrix},
\end{equation}
where $B_0,\ldots,B_d$ are square blocks of arbitrary (possibly different) dimension, $\theta \in \R$ and $v_i,w_j$ are column vectors of the appropriate dimension. In this case we have
\begin{equation}
\det(M) = \theta \prod_{i=0}^d \det B_i - \sum_{i=0}^d \left(\prod_{j \neq i} \det B_j\right) v_i^* \mathrm{cof}(B_i) w_i.
\end{equation}
If $B_i = a_i I + b_i J$, then $\mathrm{cof}(B_i) = B_i$. If we also assume that $B_0 =1$, $v_0 = w_0 = 0$, we have
\begin{equation}\label{eq:formuladet}
\det(M) = \theta \prod_{i=1}^d \det B_i - \sum_{i=1}^d \left(\prod_{j \neq i} \det B_j\right) v_i^* B_i w_i.
\end{equation}
From Lemma~\ref{l:geods}, the differential of the exponential map has the above form, with
\begin{align}
B_0 & = \frac{\partial x^0}{\partial p_x^0} = 1, \qquad B_i  = \frac{\partial x^i}{\partial p_x^i}  = \frac{\sin(\alpha_i p_z)}{\alpha_i p_z}I + \frac{\cos(\alpha_i p_z )-1}{\alpha_i p_z} J, \\
v_i & = \frac{\partial x^i}{\partial p_z} = \frac{\alpha_i p_z \cos(\alpha_i p_z) - \sin(\alpha_i p_z)}{\alpha_i p_z^2} I p_x^i + \frac{1-\cos(\alpha_i p_z) - \alpha_i p_z \sin(\alpha_i p_z)}{\alpha_i p_z^2} J p_x^i, \\
w_i & = \frac{\partial z}{\partial p_x^i} = \frac{\alpha_i p_z - \sin (\alpha_i p_z)}{\alpha_i p_z^2} p_x^i, \\ 
\theta  &= \frac{\partial z}{\partial p_z} = \sum_{i=1}^d \|p_x^i\|^2 \left( \frac{2 \sin(\alpha_i p_z) - \alpha_i p_z - \alpha_i p_z \cos(\alpha_i p_z)}{2\alpha_i p_z^3}\right) .
\end{align}
The result follows applying formula \eqref{eq:formuladet} and observing that, for $i=1,\ldots,d$, we have
\begin{align}
\det(B_i) & = \frac{4 \sin^2(\alpha_i p_z/2)}{(\alpha_i p_z)^2}, \\
v_i^* B_i w_i & =\frac{\alpha_i^2 \|p_x^i\|^2}{(\alpha_i p_z)^5} (\alpha_i p_z-\sin (\alpha_i p_z)) (\alpha_i p_z \sin (\alpha_i p_z)+2 \cos (\alpha_i p_z)-2) . \qedhere
\end{align}
\end{proof}

\begin{lemma}[Characterization of the cotangent injectivity domain]\label{l:ingjd}
Consider the set
\begin{equation}
D := \left\lbrace \lambda = (p_x,p_z) \in T_e^*G \text{ such that } |p_z| < \frac{2\pi}{\alpha_d} \text{ and } A p_x \neq 0 \right\rbrace \subset T_e^*G.
\end{equation}
Then $\exp_e : D \to \exp_e(D)$ is a smooth diffeomorphism and $\mathcal{C}(e):=G \setminus \exp_e(D) $ is a closed set with zero measure. 
\end{lemma}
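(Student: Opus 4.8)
The plan is to exploit the explicit formulas of Lemma~\ref{l:geods} and the Jacobian computation of Lemma~\ref{l:jacobian}. First I would show that $\exp_e$ is injective on $D$. The crucial observation is that the first block $x^0 = p_x^0$ recovers $p_x^0$ directly, so only the remaining data $(p_x^1,\ldots,p_x^d,p_z)$ need to be reconstructed from $(x^1,\ldots,x^d,z)$. For each $i$ with $p_x^i \neq 0$, the formula $x^i = M_i(\alpha_i p_z) p_x^i$ where $M_i(\theta) = \frac{\sin\theta}{\theta} I + \frac{\cos\theta - 1}{\theta}J$ gives $\|x^i\|^2 = \|p_x^i\|^2 \cdot \frac{4\sin^2(\alpha_i p_z/2)}{(\alpha_i p_z)^2}$ (using $\det M_i = \frac{4\sin^2(\theta/2)}{\theta^2}$ and the fact that $M_i$ is a scalar multiple of a rotation, so $\|M_i v\|^2 = \det(M_i)\|v\|^2$). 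On the range $|p_z| < 2\pi/\alpha_d \leq 2\pi/\alpha_i$ the argument $\alpha_i p_z/2$ lies in $(-\pi,\pi)$, where $\frac{\sin t}{t}$ is strictly positive; combining the equation for $z$ (which is a sum of nonnegative terms, strictly positive since $Ap_x \neq 0$) with the angle-extraction argument one recovers $|p_z|$, and then the sign of $p_z$ and finally each $p_x^i$ from the matrix equation. I would package this as: the map $D \to \exp_e(D)$ is a continuous bijection whose inverse is given by explicit formulas, hence a homeomorphism, and since the Jacobian $J(p_x,p_z)$ of Lemma~\ref{l:jacobian} is nonvanishing on $D$ (each summand is nonnegative, each factor $\sin(\alpha_j p_z/2)^2 > 0$ for $j$ outside the kernel of $A$ acting on $p_x$, and $\sin(t) - t\cos(t) \neq 0$ for $t \in (-\pi,\pi)\setminus\{0\}$, with the limit value $J(p_x,0) = \frac{1}{12}\sum \|p_x^i\|^2\alpha_i^2 > 0$), the inverse function theorem upgrades this to a smooth diffeomorphism.

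Next I would identify the complement $\mathcal{C}(e) = G \setminus \exp_e(D)$ and show it has zero measure. The set $T^*_e G \setminus D$ splits into the region $\{|p_z| \geq 2\pi/\alpha_d\}$ and the abnormal locus $\{Ap_x = 0\}$. The image of the abnormal locus under $\exp_e$ is, by Remark~\ref{r:abnormal}, contained in the lower-dimensional set $\{x^1 = \cdots = x^d = 0,\ z = 0\} \cong \R^{k-2d}$, which is negligible in $G \simeq \R^{k+1}$ (here one uses $d \geq 1$, i.e.\ $A \neq 0$, which holds since the group is bracket-generating of step $2$). For the region $|p_z|$ large, the standard argument is that the geodesics involved are no longer minimizing beyond the first conjugate/cut time: more precisely, one shows every point of $G$ outside a negligible set is reached by a geodesic with covector in $D$. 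The cleanest route is to invoke that on a corank $1$ Carnot group every minimizing geodesic is normal (stated in the text), that the cut time along a normal geodesic with covector $(p_x,p_z)$, $Ap_x \neq 0$, is $t_{\mathrm{cut}} = 2\pi/(\alpha_d |p_z|)$ — this can be read off from the first positive zero of $J(tp_x, tp_z)$ in $t$, i.e.\ the first conjugate time, together with the symmetry argument showing conjugate and cut coincide here — so that for the unit-time geodesic to be minimizing one needs exactly $|p_z| \leq 2\pi/\alpha_d$, and the boundary case $|p_z| = 2\pi/\alpha_d$ together with the non-injective abnormal covectors maps into a negligible set. Then $\exp_e(D)$ covers the complement of a negligible set, and $\exp_e(D)$ is open (being a diffeomorphic image of the open set $D$), so $\mathcal{C}(e)$ is closed and negligible.

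The main obstacle I anticipate is the sharp determination of the cut locus, i.e.\ proving that $|p_z| = 2\pi/\alpha_d$ is precisely the cut threshold and that nothing outside $\exp_e(D)$ of positive measure is missed. Injectivity on $D$ and smoothness of the inverse are essentially bookkeeping with the explicit formulas; but ruling out that a positive-measure chunk of $G$ is reached only by geodesics with $|p_z| \geq 2\pi/\alpha_d$ requires the conjugate-point / cut-point analysis. For the non-degenerate case this is done in \cite{ABB-Hausdorff}, and the degenerate case $\ker A \neq 0$ adds only the abnormal directions, which are handled separately as above; the key technical input is that the first conjugate time of a normal geodesic is governed by the first zero in $t$ of $J(tp_x,tp_z) = \frac{2^{2d}}{\alpha^2 (tp_z)^{2d+2}}\sum_i \|p_x^i\|^2 \prod_{j\neq i}\sin^2(\alpha_j t p_z/2)\sin(\alpha_i t p_z/2)(\sin(\alpha_i tp_z/2) - \tfrac{\alpha_i tp_z}{2}\cos(\alpha_i tp_z/2))$, and that the symmetry of the problem under the $O(2)^d$-action on the $x^i$-planes forces the cut time to coincide with this conjugate time.
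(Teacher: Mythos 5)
Your plan follows the same skeleton as the paper's proof: injectivity from the explicit formulas of Lemma~\ref{l:geods}; non-vanishing of the Jacobian of Lemma~\ref{l:jacobian} on $D$ plus the inverse function theorem for smoothness; surjectivity up to a null set from completeness, normality of all minimizers, and the conjugate-time bound; and the observation that the abnormal covectors map into the negligible set $\R^{k-2d}\times\{0\}$. The one genuinely different step is injectivity. The paper compares two covectors with the same image, using equality of lengths $\|p_x\|=\|\bar p_x\|$ together with the strict monotonicity of $\sinc$ to force $p_z=\bar p_z$, and then inverts the $2\times 2$ blocks. You instead invert the map directly, recovering $p_z$ from $z$ and the norms $\|x^i\|$. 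This works, but the step you call the ``angle-extraction argument'' is exactly where the content lies: substituting $\|p_x^i\|^2=\|x^i\|^2/\sinc(\alpha_ip_z/2)^2$ into the equation for $z$, you must show that
\begin{equation*}
p_z\;\longmapsto\;\sum_{i}\|x^i\|^2\,\frac{\alpha_i\bigl(\alpha_ip_z-\sin(\alpha_ip_z)\bigr)}{8\sin^2(\alpha_ip_z/2)}
\end{equation*}
is strictly increasing on $(-2\pi/\alpha_d,2\pi/\alpha_d)$. This is true and easily checked: the derivative of $(\theta-\sin\theta)/(1-\cos\theta)$ has numerator $2-2\cos\theta-\theta\sin\theta=4\sin(\theta/2)\,g(\theta/2)$ with $g(t)=\sin t-t\cos t$, i.e., the same elementary positivity already used in Lemma~\ref{l:ineq}; but it must be stated and proved, since it is the whole injectivity argument in your version. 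Finally, you overestimate what is needed about the cut locus: the sharp identification of the cut time with $2\pi/(\alpha_d|p_z|)$ and the symmetry argument that cut equals conjugate are not required. It suffices that (i) every point of $G$ is reached by a minimizing geodesic, which is necessarily normal, and (ii) a strictly normal geodesic with $|p_z|>2\pi/\alpha_d$ has conjugate time $t_*=2\pi/(\alpha_d|p_z|)<1$ and hence is not minimizing on $[0,1]$; together with the abnormal case these give $\exp_e(\bar D)=G$, and $\exp_e(\partial D)$ is null because $\partial D$ is null and $\exp_e$ is smooth. This one-sided argument is exactly what the paper uses, so the ``main obstacle'' you anticipate is not actually an obstacle.
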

\begin{proof}
Since all geodesic are normal and $(G,d)$ is complete, each point of $G$ is reached by at least one minimizing normal geodesic $\gamma_\lambda :[0,1] \to G$, with $\lambda =(p_x,p_z) \in T_e^*G$. If $|p_z| > 2 \pi/ \alpha_d$ and $A p_x \neq 0$, then $\gamma_\lambda$ is a strictly normal geodesic (i.e. not abnormal) with a conjugate time at $t_* = 2 \pi/\alpha_d |p_z| < 1$. Strictly normal geodesics lose optimality after their first conjugate time (see \cite{nostrolibro}), hence $\gamma_\lambda(t)$ is not minimizing on $[0,1]$. On the other hand, if $A p_x =0$, for any value of $p_z$ we obtain the same abnormal geodesic (see Remark~\ref{r:abnormal}). It follows that $\exp_e : \bar{D} \to G$ is onto (the bar denotes the closure). Thus, $\exp_e: D \to \exp_e(D)$ is onto and $\mathcal{C}(e) = G \setminus \exp_e(D) = \exp_e(\partial D)$ has zero measure.

We now prove that, if $\lambda \in D$, then $\gamma_\lambda :[0,1] \to G$ is the unique geodesic joining its endpoints. In fact, assume that there are two covectors $\lambda=(p_x,p_z)$ and $\bar\lambda=(\bar p_x,\bar p_z) \in D$, such that $\exp_e(\lambda) = \exp_e(\bar \lambda)$. Since the two geodesics have the same length, $\|p_x\| = \ell(\gamma_\lambda) =\ell(\gamma_{\bar\lambda}) = \|\bar p_x\|$. Using Lemma~\ref{l:geods}, we have $p_x^0 = \bar p_x^0$ and
\begin{equation}\label{eq:sinc}
\|x^i\|^2 = 4 \|p_x^i\|^2 \sinc(\alpha_i p_z)^2 =4 \|\bar p_x^i\|^2 \sinc(\alpha_i \bar p_z)^2 , \qquad \forall i=1,\ldots,d,
\end{equation}
where $\sinc(w) = \sin(w)/w$ is positive and strictly decreasing on $[0,\pi)$. Since $A p_x , A \bar p_x \neq 0$, there exist two non-empty set of indices $I,\bar I \subset \{1,\ldots,d\}$ such that, for $i \in I$ (resp. $\bar I$) we have $\|p_x^i\|^2 \neq 0$ (resp. $\|\bar p_x^i\|^2 \neq 0$). Since $\alpha_i p_z, \alpha_i \bar p_z < \pi$, by \eqref{eq:sinc}, we have $I= I'$.

Assume now that $\bar p_z > p_z$. Then by \eqref{eq:sinc} $\|\bar p_x^i\|^2 > \|p_x^i\|^2$ for all $i \in I$. In particular 
\begin{equation}
\|\bar p_x\|^2 =\|\bar p_x^0\|^2+ \sum_{i \in I} \|\bar p_x^i\|^2 >  \|p_x^0\|^2+ \sum_{i \in I} \|p_x^i\|^2 = \|p_x\|^2,
\end{equation}
which is a contradiction. Analogously if $\bar p_z < p_z$, with reversed inequalities. Thus $p_z = \bar p_z$. Using now the equations for the coordinate $x^i$ of Lemma~\ref{l:geods} we observe that
\begin{equation}
\left[\frac{\sin(\alpha_i p_z)}{\alpha_i p_z} I + \frac{\cos(\alpha_i p_z)-1}{\alpha_i p_z} J\right](\bar p_x^i - p_x^i) = 0, \qquad \forall i = 1, \ldots,d.
\end{equation}
The $2 \times 2$ matrix on the left hand side is invertible (since if $\alpha_i p_z < \pi$), hence also $\bar p_x = p_x$. Thus $\exp_e :D \to \exp_e(D)$ is invertible.

Finally, no point $\lambda \in D$ can be critical for $\exp_e$. In fact, from Lemma~\ref{l:jacobian} we have that $J(p_x,p_z) = \sum_{i=1}^d \|p_x^i\|^2 f_i(p_z)$, where each $f_i(p_z) > 0$ for $p_z < 2 \pi/ \alpha_d$. In particular $J(p_x,p_z) = 0$ if and only if $Ap_x = 0$. But this closed set was excluded from $D$.
\end{proof}

\begin{cor}\label{c:good}
For any $x \in G$, let $\mathcal{C}(x):= L_x \mathcal{C}(e)$, where $L_x :G \to G$ is the left-translation. There exists a measurable map $\Phi^x : G \setminus \mathcal{C}(x) \times [0,1] \to G$, given by
\begin{equation}
\Phi^x(y,t) = L_x \exp_{e}(t \exp_e^{-1}(L_x^{-1} y)),
\end{equation}
such that $\Phi^x(y,t)$ is the unique minimizing geodesic joining $x$ with $y$.
\end{cor}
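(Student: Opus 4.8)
The plan is to reduce everything to the base point $e$ by left translations, and then read off the statement from Lemma~\ref{l:ingjd}.

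First I would recall from Section~\ref{s:preliminaries} that each left translation $L_x : G \to G$ is a smooth isometry of the sub-Riemannian structure. Consequently $L_x$ carries minimizing geodesics issued from $e$ bijectively onto minimizing geodesics issued from $x$, and it preserves uniqueness: a curve $\gamma$ is \emph{the} unique minimizing geodesic from $x$ to $y$ precisely when $L_x^{-1}\circ\gamma$ is the unique minimizing geodesic from $e$ to $L_x^{-1}y$. In the same way, the ``cut locus at $x$'' is the $L_x$-image of the one at $e$, which is why $\mathcal{C}(x):=L_x\mathcal{C}(e)$ is the natural definition; since $L_x$ is a diffeomorphism and, by Lemma~\ref{l:ingjd}, $\mathcal{C}(e)$ is closed and Lebesgue-negligible, the same holds for $\mathcal{C}(x)$, so it is negligible as required by the definition of metric measure space.

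Next I would invoke Lemma~\ref{l:ingjd} directly: it asserts that $\exp_e|_D : D \to \exp_e(D) = G\setminus\mathcal{C}(e)$ is a smooth diffeomorphism. Hence $\exp_e^{-1}$ is a well-defined smooth map $G\setminus\mathcal{C}(e)\to D$, and for $w\notin\mathcal{C}(e)$ the curve $t\mapsto\exp_e\bigl(t\,\exp_e^{-1}(w)\bigr)=\gamma_{\exp_e^{-1}(w)}(t)$, $t\in[0,1]$, is by construction a normal geodesic from $e$ to $w$; by the uniqueness statement in that lemma, together with the facts that completeness forces a minimizing geodesic to reach $w$ and that all minimizing geodesics of a corank $1$ Carnot group are normal, it is exactly the unique minimizing geodesic from $e$ to $w$. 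Composing with $L_x$ on both ends then shows that $\Phi^x(y,t)=L_x\exp_e\bigl(t\,\exp_e^{-1}(L_x^{-1}y)\bigr)$ is the unique minimizing geodesic from $x$ to $y$ whenever $y\notin\mathcal{C}(x)$, i.e.\ whenever $L_x^{-1}y\notin\mathcal{C}(e)$. Finally, $\Phi^x$ is a composition of the smooth maps $L_x^{-1}$, $\exp_e^{-1}$ (smooth on $G\setminus\mathcal{C}(e)$), the scaling $(\lambda,t)\mapsto t\lambda$, $\exp_e$ (smooth on $T_e^*G$), and $L_x$; hence it is smooth, a fortiori measurable, on $(G\setminus\mathcal{C}(x))\times[0,1]$.

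I expect no genuine obstacle here: the substantive content is entirely in Lemma~\ref{l:ingjd}. The only points deserving a word of care — and already settled there — are that $\exp_e^{-1}$ is genuinely single-valued (the injectivity of $\exp_e$ on $D$) and that the radial curve $t\mapsto\exp_e(t\lambda)$ traces the minimizing geodesic for \emph{every} $t\in[0,1]$; the latter needs only that radial curves of the exponential map are geodesics and that subsegments of minimizers are minimizers, so membership $t\lambda\in D$ is not needed (although in fact $D$ is star-shaped about the origin, so it holds anyway). As a byproduct this establishes that corank $1$ Carnot groups have negligible cut loci, as was claimed before Theorem~\ref{t:corank1}.
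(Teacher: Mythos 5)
Your argument is correct and is exactly the reasoning the paper intends: the corollary is stated without proof as an immediate consequence of Lemma~\ref{l:ingjd} (injectivity and surjectivity of $\exp_e$ on $D$, negligibility of $\mathcal{C}(e)$) combined with the left-invariance of the structure and the fact that restrictions of minimizers are minimizers. Your additional remarks on star-shapedness of $D$ and on measurability are sound but not needed beyond what the lemma already supplies.
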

The next key lemma and its proof are a simplified version of the original concavity argument of Juillet for the Heisenberg group \cite[Lemma 2.6]{Juillet}.

\begin{lemma}\label{l:ineq}
Let $g(x):= \sin(x) - x \cos(x)$. Then, for all $x \in (0,\pi)$ and $t \in [0,1]$,
\begin{equation}
g(t x) \geq t^N g(x), \qquad \forall N \geq 3.
\end{equation}
\end{lemma}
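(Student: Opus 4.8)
The plan is to reduce the claimed inequality $g(tx) \geq t^N g(x)$ to the single-variable statement that the function $t \mapsto g(tx)/t^N$ is monotone non-decreasing on $(0,1]$ for each fixed $x \in (0,\pi)$, and in fact it suffices to treat the borderline exponent $N=3$, since for $N > 3$ and $t \in [0,1]$ we have $t^N g(x) \leq t^3 g(x)$ (using $g(x) = \sin x - x\cos x > 0$ on $(0,\pi)$, which one checks since $g(0)=0$ and $g'(x) = x \sin x > 0$ there). So the whole content is: $g(tx) \geq t^3 g(x)$ for all $x \in (0,\pi)$, $t \in [0,1]$.

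First I would fix $x \in (0,\pi)$ and define $\varphi(t) := g(tx)/t^3$ for $t \in (0,1]$, extended by its limit $\varphi(0^+) = x^3 \cdot \lim_{s\to 0} g(s)/s^3 = x^3/3$ (from the Taylor expansion $g(s) = s^3/3 - s^5/30 + \cdots$). The goal is $\varphi(t) \leq \varphi(1)$, i.e. that $\varphi$ is non-decreasing, equivalently $\varphi'(t) \geq 0$. Computing, $\varphi'(t) = t^{-4}\big(t x\, g'(tx) - 3 g(tx)\big)$, so it suffices to show $h(s) := s\, g'(s) - 3 g(s) \geq 0$ for $s \in (0,\pi)$. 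Since $g'(s) = s\sin s$, we get $h(s) = s^2 \sin s - 3\sin s + 3 s\cos s = (s^2 - 3)\sin s + 3 s \cos s$.

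Next I would show $h(s) \geq 0$ on $(0,\pi)$ by differentiating once more: $h'(s) = 2s\sin s + (s^2-3)\cos s + 3\cos s - 3 s\sin s = -s\sin s + s^2 \cos s = s(s\cos s - \sin s) = -s\, g(s)$. Since $g(s) > 0$ on $(0,\pi)$, we get $h'(s) < 0$ there, so $h$ is strictly decreasing on $(0,\pi)$; combined with $h(0) = 0$ this would give $h(s) < 0$, which is the wrong sign. So the correct route is to observe $h(s) = s^2\sin s - 3g(s)$ and instead bound it near $s=\pi$: we have $h(\pi) = \pi^2 \cdot 0 - 3(\, 0 - \pi(-1)\,) = -3\pi < 0$. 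This shows $h$ is negative at least near $\pi$, meaning $\varphi' < 0$ there, i.e. $\varphi$ is decreasing — which is consistent with wanting $\varphi(t) \leq \varphi(1)$ for $t \le 1$ to possibly \emph{fail}. Hence the monotonicity approach must be organized as $\varphi$ \emph{decreasing}, giving $\varphi(t) \geq \varphi(1)$, i.e. $g(tx) \geq t^3 g(x)$ — exactly the claim. So the right statement to prove is $h(s) \leq 0$ on $(0,\pi)$, which follows immediately from $h(0)=0$ together with $h'(s) = -s\,g(s) < 0$ on $(0,\pi)$.

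Assembling: $h(s) = s g'(s) - 3g(s) \leq 0$ on $(0,\pi)$ forces $\varphi'(t) = t^{-4} h(tx) \leq 0$ for $t \in (0,1]$ (as $tx \in (0,\pi)$), so $\varphi$ is non-increasing on $(0,1]$, whence $\varphi(t) \geq \varphi(1)$, i.e. $g(tx) \geq t^3 g(x)$; then for general $N \geq 3$, $g(tx) \geq t^3 g(x) \geq t^N g(x)$ since $t^3 \geq t^N$ on $[0,1]$ and $g(x) > 0$. The case $t=0$ is trivial ($g(0)=0$), completing the proof. The only mild obstacle is bookkeeping the chain of derivatives $g' (s)= s\sin s$, $g''$, etc., and recognizing the telescoping identity $h'(s) = -s\,g(s)$, which is what makes the sign analysis clean; everything else is routine. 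I would also note the positivity fact $g > 0$ on $(0,\pi)$ is used twice and should be stated up front.
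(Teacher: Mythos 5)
Your proof is correct and follows essentially the same route as the paper: both reduce to the case $N=3$ and to the sign of $s\,g'(s) - 3g(s)$ on $(0,\pi)$ (your $h$ is the negative of the paper's auxiliary function $f$), settled by the derivative identity $h'(s) = -s\,g(s)$ together with $g>0$ on $(0,\pi)$. The only difference is cosmetic --- you phrase the integration step as monotonicity of $t \mapsto g(tx)/t^3$ rather than invoking Gronwall on $[tx,x]$ --- though the mid-proof false start about the expected sign of $h$ should be edited out of a final write-up.
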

\begin{proof}
The condition $N \geq 3$ is necessary, as $g(x) = x^3/3 + O(x^4)$. It is sufficient to prove the statement for $N=3$. The cases $t=0$ and $t=1$ are trivial, hence we assume $t \in (0,1)$. By Gronwall's Lemma the above statement is implied by the differential inequality
\begin{equation}
g'(s) \leq  3 g(s)/s, \qquad s \in (0,\pi).
\end{equation}
In fact, it is sufficient to integrate the above inequality on $[tx,x] \subset (0,\pi)$ to prove our claim. The above inequality reads
\begin{equation}
f(s):=(3-s^2) \sin (s)-3 s \cos (s) \geq 0, \qquad s \in (0, \pi).
\end{equation}
To prove it, we observe that $f(0) = 0$ and $f'(s) =  s (\sin (s)-s \cos (s)) \geq 0$ on $(0,\pi)$.
\end{proof}
\begin{cor}\label{c:jacobineq}
For all $(p_x,p_z) \in D$, we have the following inequality
\begin{equation}
\frac{J(tp_x, tp_z)}{J(p_x,p_z)} \geq t^2, \qquad \forall t \in [0,1].
\end{equation}
\end{cor}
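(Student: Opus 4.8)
The plan is to reduce the inequality to a termwise comparison inside the explicit formula of Lemma~\ref{l:jacobian}, and then to invoke two elementary convexity facts.

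First I would dispose of the degenerate directions. Since $g(x)=\sin(x)-x\cos(x)$ is odd, the formula of Lemma~\ref{l:jacobian} shows $J(p_x,-p_z)=J(p_x,p_z)$, so (replacing $(p_x,p_z)$ by $(p_x,-p_z)\in D$ if needed) we may assume $p_z\geq 0$; and if $p_z=0$ the limit formula $J(p_x,0)=\tfrac{1}{12}\sum_{i=1}^d\|p_x^i\|^2\alpha_i^2$ is homogeneous of degree $2$ in $p_x$, so $J(tp_x,0)=t^2 J(p_x,0)$. The case $t=0$ is trivial, so from now on $p_z>0$ and $t\in(0,1]$. Next I would rewrite the Jacobian as
\begin{equation}
J(p_x,p_z)=\frac{2^{2d}}{\alpha^2 p_z^{2d+2}}\sum_{i=1}^d\|p_x^i\|^2\,T_i(p_z),\qquad T_i(p_z):=\left(\prod_{j\neq i}\sin^2\!\Big(\tfrac{\alpha_j p_z}{2}\Big)\right)\sin\!\Big(\tfrac{\alpha_i p_z}{2}\Big)\,g\!\Big(\tfrac{\alpha_i p_z}{2}\Big).
\end{equation}
Because $(p_x,p_z)\in D$ forces $0<\alpha_i p_z/2<\pi\alpha_i/\alpha_d\leq\pi$ for every $i$, and $g>0$ on $(0,\pi)$ (as $g(0)=0$ and $g'(x)=x\sin x\geq 0$ there, exactly as in the proof of Lemma~\ref{l:ineq}), every factor in $T_i(p_z)$ is strictly positive.

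Substituting $(tp_x,tp_z)$ and using $\|tp_x^i\|^2=t^2\|p_x^i\|^2$ together with the homogeneity of the prefactor gives
\begin{equation}
\frac{J(tp_x,tp_z)}{J(p_x,p_z)}=t^{-2d}\,\frac{\sum_{i=1}^d\|p_x^i\|^2\,T_i(tp_z)}{\sum_{i=1}^d\|p_x^i\|^2\,T_i(p_z)}.
\end{equation}
Since the coefficients $\|p_x^i\|^2$ are non-negative and not all zero (this is precisely the condition $Ap_x\neq 0$ defining $D$), it suffices to prove the termwise bound $T_i(tp_z)\geq t^{2d+2}\,T_i(p_z)$ for each $i$. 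Finally I would estimate the factors of $T_i(tp_z)$ one at a time on $(0,\pi)$: concavity of the sine on $[0,\pi]$ yields $\sin(tx)\geq t\sin x$ for $x\in[0,\pi]$, $t\in[0,1]$, which (after squaring) contributes a factor $t^{2(d-1)}$ from the $d-1$ arguments $\alpha_j p_z/2$ with $j\neq i$, and a factor $t$ from the single argument $\alpha_i p_z/2$; and Lemma~\ref{l:ineq} with $N=3$ gives $g(t\alpha_i p_z/2)\geq t^3\, g(\alpha_i p_z/2)$, a factor $t^3$. Multiplying these, $T_i(tp_z)\geq t^{\,2(d-1)+1+3}\,T_i(p_z)=t^{2d+2}\,T_i(p_z)$, which is what we needed.

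The only thing one has to watch is the exact bookkeeping of the powers of $t$, and this is balanced precisely by the exponent $N=3$ in Lemma~\ref{l:ineq} (which is itself sharp, since $g(x)=x^3/3+O(x^4)$). Beyond the two convexity inputs — concavity of $\sin$ and Lemma~\ref{l:ineq} — there is no genuine analytic obstacle here; the main point is recognizing the clean multiplicative decomposition of the Jacobian that makes the termwise estimate possible.
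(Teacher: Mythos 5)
Your proof is correct and follows exactly the route of the paper, which simply says to apply Lemma~\ref{l:ineq} to the explicit formula of Lemma~\ref{l:jacobian} together with the concavity bound $\sin(tx)\geq t\sin(x)$ on $[0,\pi]$; you have merely spelled out the bookkeeping of the powers of $t$ ($t^{2}$ from $\|p_x^i\|^2$, $t^{-2d-2}$ from the prefactor, $t^{2(d-1)+1}$ from the sines, $t^{3}$ from $g$), which checks out.
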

\begin{proof}
Apply Lemma \ref{l:ineq} to the explicit expression of $J$ from Lemma~\ref{l:jacobian}, and then use the standard inequality $\sin(t x) \geq t \sin(x)$, valid for all $x \in [0,\pi]$ and $t \in [0,1]$.
\end{proof}

\section{Proof of Theorem \ref{t:corank1}}\label{s:proofcorank1}

The proof combines the arguments of \cite{Juillet} and the computation of the Jacobian determinant of \cite{ABB-Hausdorff} for contact Carnot groups, extended here to the general corank $1$ case. 

\subsection{Step 1}
We first prove that the $\mathrm{MCP}(0,N)$ holds for $N\geq k+3$. By left-translation, it is sufficient to prove the inequality \eqref{eq:MCP} for the homothety with center equal to the identity $e=(0,0)$. Let $\Omega$ be a measurable set with $0<\mu(\Omega)<+\infty$. 

By Lemma~\ref{l:ingjd}, up to removing a set of zero measure, $\Omega = \exp_e(A)$ for some $A \subset D \subset T_e^*G$. On the other hand, by Corollary~\ref{c:good}, we have
\begin{equation}
\Omega_t = \exp_e(t A), \qquad \forall t \in [0,1],
\end{equation}
where $t A $ denotes the set obtained by multiplying by $t$ any point of the set $A \subset T_e^*G$ (an Euclidean homothety). Thus, for all $t \in [0,1]$ we have
\begin{align}
\mu(\Omega_t) & = \int_{\Omega_t} d \mu = \int_{t A} J(p_x,p_z) dp_x dp_z \\
& = t^{k+1}  \int_{A} J(t p_z, tp_z) dp_x dp_z \geq  t^{k+3}  \int_{A} J(p_x,p_z) dp_x dp_z = t^{k+3} \mu(\Omega),
\end{align}
where we used Corollary~\ref{c:jacobineq}. In particular $\mu(\Omega_t) \geq t^N \mu(\Omega)$ for all $N \geq k+3$.

\subsection{Step 2}
Fix $\varepsilon>0$. We prove that the $\mathrm{MCP}(0,k+3-\varepsilon)$ does not hold. Let $\lambda = (p_x,0) \in D$. By Lemma~\ref{l:jacobian}, and recalling that $J > 0$ on $D$, we have 
\begin{equation}
J(t p_x,0)= t^2 J(p_x,0) < t^{2-\varepsilon} J(p_x,0), \qquad \forall t \in[0,1].
\end{equation}
By continuity of $J$ and compactness of $[0,1]$, we find an open neighborhood $A \subset D$ of $\lambda$ such that $J(t\lambda) < t^{2-\varepsilon} J(\lambda)$, for all $t \in [0,1]$. In particular, for $\Omega= \exp_e(A)$, we obtain
\begin{equation}
\mu(\Omega_t) < t^{k+3-\varepsilon} \mu(\Omega), \qquad t \in [0,1].
\end{equation}

\subsection{Step 3}

To  prove that $\mathrm{MCP}(K,N)$ does not hold for $K>0$ and any $N >1$, we observe that spaces verifying this condition are bounded, while $G \simeq \R^n$ clearly is not.
Finally, assume that $(G,d,\mu)$ satisfies $\mathrm{MCP}(K,N)$ for some $K < 0$ and $N < k+3$. Then the scaled space $(G,\varepsilon^{-1} d, \varepsilon^{-Q} \mu)$ (where $\varepsilon>0$ and $Q = k+2$ is the Haudorff dimension of $(G,d)$) verifies $\mathrm{MCP}(\varepsilon^2 K,N)$ for \cite[Lemma 2.4]{Ohta-MCP}. But the two spaces $(G,d,\mu)$ and $(G,\varepsilon^{-1} d, \varepsilon^{-Q} \mu)$ are isometric through the dilation $\delta_\varepsilon(x,z):=(\varepsilon x, \varepsilon^2 z)$. In particular $(G,d,\mu)$ satisfies the $\mathrm{MCP}(\varepsilon K,N)$ for all $\varepsilon>0$, that is
\begin{equation}
\mu(\Omega_t) \geq \int_{\Omega} t \left[\frac{s_{\varepsilon K}(t d(x,z) / \sqrt{N-1})}{s_{\varepsilon K}(d(x,z) / \sqrt{N-1})}\right]^{N-1} d \mu(z), \qquad \forall t \in[0,1].
\end{equation}
Taking the limit for $\varepsilon \to 0^+$, we obtain that $(G,d,\mu)$ satisfies the $\mathrm{MCP}(0,N)$ with $N< k+3$, but this is false by the previous step  (these are the same arguments of \cite{Juillet}). $\hfill \qed$

\section{Proof of Theorem \ref{t:basicprop}}\label{s:proofbasicprop}

Assume that $B$ is bounded. In particular, $\mu(B) < + \infty$. For any $k>0$ let $\mathcal{H}^k$ denote the $k$-dimensional Hausdorff measure on $(X,d)$. Let $k < \dim_H(B)$, then $\mathcal{H}^k(B) = +\infty$. By \cite[Theorem 2.4.3]{AT-Metric}, there exists an $x \in B$ such that
\begin{equation}
\limsup_{t \to 0^+} \frac{\mu(\ball(x,t))}{t^k} < + \infty.
\end{equation}
Let $\Omega$ be a bounded measurable set with $0< \mu(\Omega) < +\infty$, and let $\Omega_t$ be its homothety with center $x$. We have $\Omega \subset \ball(x,R)$ for some $R>0$, and $\Omega_t \subseteq \ball(x,tR)$. In particular
\begin{equation}
\limsup_{t \to 0^+} \frac{1}{t^{k-\varepsilon}} \frac{\mu(\Omega_t)}{\mu(\Omega)} \leq \limsup_{t \to 0^+} \frac{1}{t^{k-\varepsilon}} \frac{\mu(\ball(x,tR))}{\mu(\Omega)} = 0, \qquad \forall \varepsilon>0.
\end{equation}
Since this holds for any bounded $\Omega$, we have $C_{k-\varepsilon}(x) = 0$ for all $k < \dim_H(B)$ and $\varepsilon>0$. By definition of geodesic dimension $\mathcal{N}(x) = \sup\{s>0 \mid C_s(x) = 0\} \geq k$. Thus, for any $k < \dim_H(B)$ we have found $x \in B$ such that $\mathcal{N}(x) \geq k$, which implies the statement. 

If $B$ is not bounded, consider the increasing sequence of bounded sets $B_j:= B\cap \ball(x,j)$, with $j \in \N$, and observe that $\dim_H(B_j)$ is a non-decreasing sequence for $j \to \infty$. $\hfill \qed$

\section{Proof of Theorem \ref{t:bound}}\label{s:proofbound}

By contradiction, assume that $N < \sup\{\mathcal{N}(x) \mid x \in M\}$. In particular there exists $x \in X$ such that $\mathcal{N}(x) > N$. Let $\Omega \subset X$ be a bounded, measurable set such that $0 < \mu(\Omega) <+\infty$, and with $\Omega \subset \ball(x,\pi\sqrt{N-1/K})$ if $K > 0$. By the $\mathrm{MCP}(K,N)$ we have
\begin{equation}
\frac{\mu(\Omega_t)}{\mu(\Omega)} \geq \frac{1}{\mu(\Omega)}\int_\Omega t \left[\frac{s_K(t d(x,z)/\sqrt{N-1})}{s_K(d(x,z)/\sqrt{N-1})}\right]^{N-1} d\mu(z), \qquad \forall t \in [0,1].
\end{equation}
We have $\Omega \subset \ball(x,R\sqrt{N-1})$ for some sufficiently large $R$ (with $R< \pi/\sqrt{K}$ if $K>0$). Consider the functions $s_K(t\delta)/ s_K(\delta)$, for $\delta \in (0,R)$. By explicit inspection using \eqref{eq:sturm} we find a constant $A_{K,R}>0$ (independent on $\delta$) such that
\begin{equation}
s_K(t\delta)/s_K(\delta) \geq A_{K,R} t, \qquad \forall t \in [0,1], \quad \forall \delta \in (0,R).
\end{equation}
Thus we have
\begin{equation}
\frac{\mu(\Omega_t)}{\mu(\Omega)} \geq \frac{1}{\mu(\Omega)}  \int_\Omega A_{K,R}^{N-1} t^N   d\mu(z) =A_{K,R}^{N-1} t^N , \qquad \forall t \in [0,1].
\end{equation}
Let $\mathcal{N}(x) - N = 2\varepsilon > 0$. We have
\begin{equation}
\limsup_{t \to 0^+} \frac{1}{t^{\mathcal{N}(x)-\varepsilon}} \frac{\mu(\Omega_t)}{\mu(\Omega)}\geq \lim_{t \to 0^+} \frac{A_{K,R}^{N-1}}{t^{\mathcal{N}(x)- \varepsilon -N}}  =   \lim_{t \to 0^+} \frac{A_{K,R}^{N-1}}{t^{\varepsilon}}  = +\infty.
\end{equation}
In particular $C_{\mathcal{N}(x)-\varepsilon}(x) = +\infty$, where $C_s(x)$ is defined in \eqref{eq:criticalratio}. This is a contradiction since $\mathcal{N}(x) = \inf\{s >0 \mid C_s(x) = +\infty \}$. $\hfill \qed$

\section{Formula for the geodesic dimension}\label{s:geod}

We recall some of the results of \cite{ABR-Curvature} and we prove that the definition of geodesic dimension given in this paper coincides with the one of \cite{ABR-Curvature} for sub-Riemannian structures.

\subsection{Flag of the distribution and Hausdorff dimension}

Let $(M,\distr,g)$ be a  fixed (sub-)Riemannian structure. The \emph{flag of the distribution} at $x \in M$ is the filtration of vector subspaces $\distr_x^1 \subseteq \distr_x^2 \subseteq \ldots \subseteq T_x M$ defined as
\begin{equation}
\distr_x^1:= \distr_x, \qquad \distr_x^{i+1} := \distr_x^i + [\distr,\distr^i]_x,
\end{equation}
where $[\distr,\distr^i]_x$ is the vector space generated by the iterated Lie brackets, up to length $i+1$, of local sections of $\distr$, evaluated at $x$. We denote with $s_x$ the \emph{step of the distribution} at $x$, that is the smallest (finite) integer such that $\distr_x^{s_x} = T_x M$.

We say that $\distr$ is \emph{equiregular} if $\dim \distr_x^i$ are constant for all $i\geq 0$. In this case the step is constant and equal to $s$. The \emph{growth vector of the distribution} is
\begin{equation}
(d_1,\ldots,d_s), \qquad d_i:=\dim\distr^i.
\end{equation}

\begin{theorem}[Mitchell \cite{Mitchell}] Let $(M,\distr,g)$ an equiregular (sub-)Riemannian structure. Then its Hausdorff dimension is given by the following formula:
\begin{equation}\label{eq:mitchell}
\dim_H(M) = \sum_{i=1}^s i (d_i-d_{i-1}), \qquad d_0 := 0.
\end{equation}
\end{theorem}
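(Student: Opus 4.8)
The plan is to reduce the computation of the Hausdorff dimension to an Ahlfors-regularity estimate for the Lebesgue measure, obtained through the \emph{ball-box theorem}. Fix a point $x \in M$; by equiregularity the growth vector $(d_1,\ldots,d_s)$ is constant, and we may attach to the $n$ coordinates a system of \emph{weights} $w_1 \le \ldots \le w_n$, where exactly $d_i - d_{i-1}$ of them equal $i$. These are the weights of a system of privileged coordinates adapted to the flag $\distr^1_x \subseteq \ldots \subseteq \distr^{s}_x = T_x M$. One then introduces the associated anisotropic dilations $\delta_r(y) = (r^{w_1} y_1, \ldots, r^{w_n} y_n)$ and the boxes $\mathrm{Box}(x,r) := \{ |y_j| < r^{w_j},\ j=1,\ldots,n \}$ in privileged coordinates centered at $x$.

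First I would invoke the ball-box theorem: there exist constants $0 < c < C$ and $r_0 > 0$, uniform on compact sets by equiregularity, such that
\begin{equation}
\mathrm{Box}(x, c r) \subseteq \ball(x,r) \subseteq \mathrm{Box}(x, C r), \qquad 0 < r < r_0.
\end{equation}
Computing the Lebesgue measure of a box gives $|\mathrm{Box}(x,r)| = 2^n r^{\sum_j w_j}$, and since by construction $\sum_j w_j = \sum_{i=1}^s i(d_i - d_{i-1}) =: Q$, the inclusions above yield the two-sided estimate
\begin{equation}
c' r^{Q} \le \mu(\ball(x,r)) \le C' r^{Q}, \qquad 0 < r < r_0,
\end{equation}
for the Lebesgue measure $\mu$, with constants uniform in $x$ on compacta. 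In other words, $\mu$ is locally Ahlfors $Q$-regular.

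From Ahlfors regularity the dimension follows by the standard two-sided argument. For the lower bound $\dim_H(M) \ge Q$, the right-hand inequality $\mu(\ball(x,r)) \le C' r^Q$ combined with the mass distribution principle (Frostman) gives $\mathcal{H}^Q \ge \mathrm{const}\cdot \mu > 0$ locally. For the upper bound $\dim_H(M) \le Q$, the left-hand inequality $\mu(\ball(x,r)) \ge c' r^Q$, together with a Vitali covering of a relatively compact set by small balls and the local finiteness of $\mu$, bounds $\mathcal{H}^Q$ from above locally. Combining the two estimates yields $\dim_H(M) = Q = \sum_{i=1}^s i(d_i - d_{i-1})$.

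The main obstacle is the ball-box theorem itself, i.e. the construction of privileged coordinates and the uniform comparison between sub-Riemannian balls and weighted boxes. This requires selecting, near $x$, a frame built from iterated brackets of horizontal fields whose flows provide coordinates in which the ball $\ball(x,r)$ degenerates precisely at the rates $r^{w_j}$ in each direction; the nilpotent approximation of $(M,\distr,g)$ at $x$ makes this transparent, and it is exactly equiregularity that guarantees the weights—hence the exponent $Q$—are independent of the base point, so that the comparison holds uniformly on compact sets. Once this uniform estimate is established, the remaining measure-theoretic steps are routine.
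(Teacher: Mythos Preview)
The paper does not prove this statement at all: it is quoted as Mitchell's theorem with a citation to \cite{Mitchell}, and used as a black box thereafter. So there is no ``paper's own proof'' to compare against.

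That said, your outline is the standard route to Mitchell's formula and is correct. The ball-box theorem in privileged coordinates gives local Ahlfors $Q$-regularity of any smooth measure with $Q = \sum_{i=1}^s i(d_i - d_{i-1})$, and the two-sided density argument (Frostman/mass distribution for the lower bound, Vitali covering for the upper bound) then pins down $\dim_H(M) = Q$. You also correctly flag that the substantive content lies entirely in the ball-box estimate and the construction of privileged coordinates, with equiregularity ensuring the weights are constant; the measure-theoretic conclusion is indeed routine once that is in hand.
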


\subsection{Flag of the geodesic and geodesic dimension}

Let $\gamma_\lambda :[0,\varepsilon) \to M$ be a normal geodesic, with initial covector $\lambda$, and $x = \gamma(0)$. Let $T \in \Gamma(\distr)$ any horizontal extension of $\gamma$, that is $T(\gamma(t)) = \dot{\gamma}(t)$ for all $t \in [0,\varepsilon)$. The \emph{flag of the geodesic} is the filtration of vector subspaces $\mathcal{F}^1_\lambda \subseteq \mathcal{F}^2_\lambda \subseteq \ldots \subseteq T_x M$ defined by
\begin{equation}
\mathcal{F}^i_\lambda := \spn\{\mathcal{L}_T^j(X)|_{x} \mid X \in \Gamma(\distr), \quad j \leq i-1\} \subseteq T_{x} M, \qquad i \geq 1,
\end{equation}
where $\mathcal{L}$ denotes the Lie derivative.
By \cite[Section 3.4]{ABR-Curvature}, this definition does not depend on the choice of the extension $T$, but only on the germ of $\gamma(t)$ at $t=0$. In particular, it depends only on the initial covector $\lambda \in T_x^*M$. We define the \emph{geodesic growth vector} as
\begin{equation}
\mathcal{G}_\lambda := (k_1,\ldots,k_i,\ldots ), \qquad k_i := \dim \mathcal{F}^i_\lambda.
\end{equation}
We say that $\gamma_\lambda$ is \emph{ample} (at $t=0$) if there is a smallest integer $m \geq 1$ such that $\mathcal{F}^m_\lambda = T_x M$. In this case the growth vector is constant after its $m$-th entry, and $m$ is called the \emph{geodesic step}. Different initial covectors may give different growth vectors (possibly associated with non-ample geodesics when $\gamma$ is abnormal). The \emph{maximal geodesic growth vector} at $x$ is
\begin{equation}
\mathcal{G}_x^{max} := (k_1^{max},\ldots, k_i^{max},\ldots ), \qquad k_i^{max} := \max \{\dim \mathcal{F}^i_\lambda \mid \lambda \in T_x^*M\}.
\end{equation}

\begin{theorem}\label{t:curv1}
The set $\mathcal{A}_x \subset T_x^*M$ of initial covectors such that the corresponding geodesic is ample, and its growth vector is maximal is an open, non-empty Zariski subset.
\end{theorem}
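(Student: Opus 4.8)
The plan is to fix the base point $x$ once and for all, to reduce the statement to an elementary fact about the ranks of polynomial families of vectors in $T_xM$, and to isolate the one genuinely geometric ingredient, namely the existence of an ample geodesic issued from $x$.

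\emph{Step 1: polynomial dependence on the covector.} I would fix a chart around $x$ and a local orthonormal frame $X_1,\dots,X_k$ of $\distr$, and identify $T_x^*M\simeq\R^n$ (canonical coordinates on the fibre). Expanding $\mathcal{L}_T^j(fX_a)$ by the Leibniz rule and evaluating at $x$ shows that, for any horizontal extension $T$ of $\dot\gamma_\lambda$,
\[
\mathcal{F}^i_\lambda=\spn\{(\mathcal{L}_T^jX_a)(x)\mid a=1,\dots,k,\ j=0,\dots,i-1\}.
\]
By the results recalled above this depends only on the germ at $t=0$ of $\gamma_\lambda(t)=\exp_x(t\lambda)$ — in fact only on its $i$-jet at $t=0$ — and not on the extension. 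Since the normal geodesic flow is generated by the Hamiltonian vector field $\vec{H}$, which has fibrewise-polynomial coefficients ($H$ being fibrewise quadratic), the Taylor coefficients at $t=0$ of $t\mapsto\gamma_\lambda(t)$ depend polynomially on $\lambda$, the base point being fixed. Building $T$ directly out of this polynomial jet data (legitimate, by the extension-independence) one obtains finitely many maps $\lambda\mapsto v^i_{a,j}(\lambda)\in T_xM$, polynomial in $\lambda$, with $\mathcal{F}^i_\lambda=\spn\{v^i_{a,j}(\lambda)\}$.

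\emph{Step 2: the only hard input.} Because each $\mathcal{L}_T^jX_a|_x$ is an iterated Lie bracket of horizontal vector fields, $\mathcal{F}^i_\lambda\subseteq\distr^i_x$ for all $\lambda$ and all $i$; and, $\distr$ being bracket-generating, $\distr^s_x=T_xM$ where $s$ is the step at $x$. The fact I would take as input, which is the real content of \cite{ABR-Curvature}, is that there is at least one ample geodesic issued from $x$, say $\gamma_{\lambda_0}$ with geodesic step $m$ (so $\mathcal{F}^m_{\lambda_0}=T_xM$, and necessarily $m\leq s$). Equivalently, one must show that for $\lambda$ in general position the jets at $t=0$ of the normal geodesic $\gamma_\lambda$ recover the whole flag $\distr^\bullet_x$. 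I expect this step to be the main obstacle: it is the place where the bracket-generating hypothesis is genuinely used, and everything else is soft.

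\emph{Step 3: the soft part.} For each $i$ and each integer $r$, the condition $\dim\mathcal{F}^i_\lambda\geq r$ is the non-vanishing of some $r\times r$ minor of the matrix with columns $v^i_{a,j}(\lambda)$, hence it defines a Zariski-open subset of $T_x^*M$. Set $U_i:=\{\lambda\mid\dim\mathcal{F}^i_\lambda=k_i^{max}\}$: it is Zariski-open, and non-empty since $k_i^{max}$ is an attained maximum of an integer-valued function. By Step 2, $k_i^{max}=n$ for all $i\geq m$, so $U_i=\{\lambda\mid\mathcal{F}^i_\lambda=T_xM\}$ for such $i$, and since $\mathcal{F}^\bullet_\lambda$ is non-decreasing one has $U_i\supseteq U_m$ for every $i\geq m$. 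Therefore
\[
\{\lambda\mid\mathcal{G}_\lambda=\mathcal{G}_x^{max}\}=\bigcap_{i\geq1}U_i=\bigcap_{i=1}^m U_i ,
\]
a \emph{finite} intersection. Each $U_i$ is the complement of the zero locus of a non-zero polynomial $P_i\in\R[\lambda_1,\dots,\lambda_n]$ (the sum of squares of the $k_i^{max}\times k_i^{max}$ minors is such a polynomial, nonzero precisely because $k_i^{max}$ is attained), so $\bigcap_{i=1}^m U_i$ is the complement of the zero locus of $P_1\cdots P_m$: a non-empty Zariski-open set, in particular open and dense in $\R^n\simeq T_x^*M$. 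Finally, any $\lambda$ in this set lies in $U_m$, so $\mathcal{F}^m_\lambda=T_xM$, i.e.\ $\gamma_\lambda$ is ample and its growth vector is maximal; conversely any $\lambda\in\mathcal{A}_x$ satisfies $\dim\mathcal{F}^i_\lambda=k_i^{max}$ for all $i$ and hence lies in $\bigcap_{i=1}^m U_i$. Thus $\mathcal{A}_x=\bigcap_{i=1}^m U_i$ has all the asserted properties.
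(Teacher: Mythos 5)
A preliminary remark: the paper does not prove Theorem~\ref{t:curv1} at all --- it is imported from \cite{ABR-Curvature} (see the sentence following Theorem~\ref{t:curv2}) --- so there is no in-paper proof to compare yours with, and I judge the proposal on its own terms. Your Steps 1 and 3 are the standard semialgebraic argument and are essentially sound: $\mathcal{F}^i_\lambda$ is the span of finitely many vectors depending polynomially on $\lambda$, each condition $\dim\mathcal{F}^i_\lambda\geq r$ is Zariski-open, the maxima $k_i^{max}$ are attained, and a finite intersection of non-empty Zariski-open subsets of $T_x^*M\simeq\R^n$ is non-empty, dense and Euclidean-open. (Step 1 is still only a sketch: you must actually exhibit, for each $\lambda$, a horizontal extension $T_\lambda$ of $\dot\gamma_\lambda$ such that the finitely many jets entering $\mathcal{L}_{T_\lambda}^jX_a|_x$ depend polynomially on $\lambda$; this can be done by extending the controls $u_a(t)=\langle\lambda(t),X_a(\gamma_\lambda(t))\rangle$, whose Taylor coefficients at $t=0$ are polynomial in $\lambda$ because $\vec{H}$ is fibrewise polynomial.)

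The genuine gap is Step 2, which you flag yourself but then treat as a harmless external input. It is not: combined with your Step 3, the existence of one ample geodesic from $x$ is \emph{equivalent} to the non-emptiness of $\mathcal{A}_x$, i.e.\ it is the substantive content of the theorem. What you prove unconditionally is only that the set of covectors with maximal growth vector is non-empty and Zariski-open; if no geodesic from $x$ were ample, the sequence $k_i^{max}$ would stabilize at a value $<n$ and every covector in that set would still fail to be ample, so $\mathcal{A}_x=\emptyset$ while your $\bigcap_i U_i$ would remain non-empty. Nor can the missing step be dispatched by a formal span argument from the bracket-generating condition: a polynomially varying family of subspaces can span the ambient space while every single member is proper (already $\spn\{(1,\lambda)\}\subset\R^2$ does this), and indeed $\mathcal{G}_x^{max}$ is in general strictly smaller, level by level, than the growth vector of the distribution --- for instance in the free step-$2$ Carnot group on three generators one has $\dim[v,\mathfrak{g}_1]\leq 2<3=\dim\mathfrak{g}_2$ for every $v\in\mathfrak{g}_1$, so no geodesic is ample with step $2$ and one must reach $T_xM$ through higher-order Lie derivatives, which depend on more of $\lambda$ than its horizontal part. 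Proving that $k_i^{max}=n$ for some finite $i$ is precisely the hard geometric part of \cite{ABR-Curvature}, and a proof of Theorem~\ref{t:curv1} that assumes it is missing its core.
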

In particular, the generic normal geodesic starting at $x$ has maximal growth vector and the minimal step $m(x)$. For a fixed $x \in M$, consider the following number:
\begin{equation}\label{eq:gdformula}
\mathcal{N}(x) = \sum_{i=1}^{m(x)} (2i-1)(k_i^{max}-k_{i-1}^{max}), \qquad k_0^{max}:= 0.
\end{equation}
\begin{theorem}\label{t:curv2}
Let $(M,\distr,g)$ be a sub-Riemannian manifold, equipped with a smooth measure $\mu$. Assume that, as a metric measure space $(M,d,\mu)$, has negligible cut loci. Let $x \in M$, and let $\Omega$ be any measurable, bounded subset with $0 < \mu(\Omega) < +\infty$. Then there exists a constant $C(\Omega) > 0$ such that
\begin{equation}
\mu(\Omega_t) \sim C(\Omega)t^{\mathcal{N}(x)}, \qquad t \to 0^+.
\end{equation}
\end{theorem}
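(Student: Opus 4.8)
The plan is to reduce the statement to a pointwise asymptotic for the Jacobian determinant of $\exp_x$ along geodesics — which is exactly what the Jacobi curve analysis of \cite{ABR-Curvature} provides and what makes formula \eqref{eq:gdformula} appear — and then to integrate this asymptotic over the cotangent injectivity domain. First I would fix $x$ and let $D\subset T_x^*M$ be the open set of covectors $\lambda$ such that $\gamma_\lambda|_{[0,1]}$ is the unique minimizing geodesic from $x$ to $\exp_x(\lambda)$ and $\lambda$ is a regular point of $\exp_x$. By the negligible cut locus assumption, $\exp_x:D\to\exp_x(D)$ is a diffeomorphism onto the complement of a $\mu$-null set, and $D$ is star-shaped with respect to $0$, since $\gamma_\lambda|_{[0,1]}$ being the unique minimizer forces $\gamma_{s\lambda}|_{[0,1]}=\gamma_\lambda|_{[0,s]}$ to be the unique minimizer for every $s\in(0,1]$. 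Setting $A:=\exp_x^{-1}(\Omega\setminus\mathcal{C}(x))\subset D$, the homothety is $\Omega_t=\exp_x(tA)$, and writing $J_x>0$ for the smooth density of $\exp_x$ relative to $\mu$ and a fixed Lebesgue measure $\mathcal{L}$ on the vector space $T_x^*M$, the linear change of variables $\lambda=t\eta$ gives
\begin{equation}
\mu(\Omega_t)=\int_{tA}J_x\,d\mathcal{L}=t^n\int_A J_x(t\eta)\,d\mathcal{L}(\eta),\qquad n:=\dim M.
\end{equation}

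\textbf{The Jacobian asymptotic.} The core input is that for every $\eta$ in the full-measure Zariski-open set $\mathcal{A}_x$ of Theorem~\ref{t:curv1},
\begin{equation}
J_x(t\eta)=t^{\mathcal{N}(x)-n}\bigl(c(\eta)+o(1)\bigr),\qquad t\to0^+,\quad c(\eta)>0,
\end{equation}
with $\mathcal{N}(x)$ as in \eqref{eq:gdformula}; note that $\mathcal{N}(x)-n=\sum_{i\ge1}(2i-2)(k_i^{max}-k_{i-1}^{max})\ge0$ since $\sum_i(k_i^{max}-k_{i-1}^{max})=n$. I would derive this by writing $d_{t\eta}\exp_x$, in a Darboux frame along the extremal lifting $\gamma_\eta$, through the Jacobi equation, so that $J_x(t\eta)$ equals — up to a smooth nonvanishing factor — the determinant measuring how the Jacobi curve $t\mapsto\mathcal{J}_\eta(t)$ of Lagrangian subspaces leaves the vertical subspace; the order of vanishing of this determinant at $t=0$ is governed by the dimensions $k_i$ of the flag $\mathcal{F}^i_\eta$ of the geodesic, and the linear-algebra computation of \cite{ABR-Curvature} identifies it with $\mathcal{N}(x)-n$, with strictly positive leading coefficient, precisely on the ample, maximal-growth locus $\mathcal{A}_x$. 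Equivalently, this leading term is the Jacobian of the exponential map of the nilpotent approximation of $(M,\distr,g)$ at $x$, where the dilations make the homogeneity $t^{\mathcal{N}(x)}$ exact.

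\textbf{Passing to the limit.} From the two displays,
\begin{equation}
\frac{\mu(\Omega_t)}{t^{\mathcal{N}(x)}}=\int_A t^{-(\mathcal{N}(x)-n)}J_x(t\eta)\,d\mathcal{L}(\eta),
\end{equation}
and the integrand tends $\mathcal{L}$-a.e. on $A$ to $c(\eta)$. I would exhaust $A$ by sets $A_j\nearrow A$ with $\overline{A_j}$ compact in $D$; on each $A_j$ the asymptotic of the previous step is uniform, so, using $\exp_x(tA_j)_t=\exp_x(tA_j)$, $\mu(\exp_x(tA_j))\nearrow\mu(\Omega_t)$ and $\int_{A_j}c\,d\mathcal{L}\nearrow\int_A c\,d\mathcal{L}=:C(\Omega)$, one gets $\liminf_{t\to0^+}\mu(\Omega_t)/t^{\mathcal{N}(x)}\ge C(\Omega)$, and $C(\Omega)>0$ because $c>0$ on the positive-measure set $A\cap\mathcal{A}_x$. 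For the matching upper bound one needs a dominating function: since $\Omega$ is bounded, $H$ is bounded on $A$ (as $\sqrt{2H(\eta)}=d(x,\exp_x\eta)$ for $\eta\in D$), and a uniform estimate $t^{-(\mathcal{N}(x)-n)}J_x(t\eta)\le\Psi(\eta)$ with $\Psi\in L^1(A)$ on this sublevel set then yields $\mu(\Omega_t)/t^{\mathcal{N}(x)}\to C(\Omega)\in(0,+\infty)$ by dominated convergence.

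\textbf{Main obstacle.} The substantive point is the Jacobian asymptotic with the exact exponent \eqref{eq:gdformula} and positive leading coefficient: this rests on the symplectic/Jacobi-curve machinery of \cite{ABR-Curvature} (or on a careful comparison with the nilpotent approximation), and is where the flag of the geodesic — rather than the flag of the distribution — enters. The remaining difficulty is purely measure-theoretic: promoting the pointwise asymptotic to the local uniformity and the $L^1$ domination used above, which is delicate because $\mathcal{A}_x$ has a nonempty (null) complement in $D$ and $D$ itself need not be relatively compact — the exhaustion argument is designed to handle exactly this.
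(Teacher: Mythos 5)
The paper does not actually prove Theorem~\ref{t:curv2}: it is quoted verbatim from \cite{ABR-Curvature} (cf.\ Proposition 5.49 there), and the text explicitly defers the proof to that reference. Your outline correctly reconstructs the architecture of that argument --- star-shaped injectivity domain $D\subset T_x^*M$, the change of variables $\mu(\Omega_t)=t^n\int_A J_x(t\eta)\,d\mathcal{L}(\eta)$, the pointwise asymptotic $J_x(t\eta)\sim c(\eta)\,t^{\mathcal{N}(x)-n}$ on the ample, maximal-growth locus $\mathcal{A}_x$, and the final interchange of limit and integral --- so in that sense you are on the same route as the source.

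That said, as a standalone proof your write-up has genuine gaps, and you have correctly located them yourself: the two steps you defer are essentially all of the content. (i) The Jacobian asymptotic with the \emph{exact} exponent $\mathcal{N}(x)-n$ and a strictly positive, locally uniform leading coefficient is the heart of \cite{ABR-Curvature}; nothing in the present paper lets you derive it, and without it formula \eqref{eq:gdformula} is unmotivated. (ii) The passage to the limit requires more than the $\liminf$ bound your exhaustion gives: to get $\mu(\Omega_t)\sim C(\Omega)t^{\mathcal{N}(x)}$ you need the matching $\limsup$, hence an integrable dominating function for $t^{-(\mathcal{N}(x)-n)}J_x(t\eta)$ on all of $A$, uniformly in $t$, including near the null set $D\setminus\mathcal{A}_x$ where the order of vanishing of $J_x(t\eta)$ can jump; asserting that such a $\Psi$ exists is not a routine measure-theoretic remark but a quantitative estimate on the exponential map. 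A further point to check: your claim that $D$ is star-shaped uses that restrictions of unique minimizers are unique minimizers (fine) \emph{and} that $s\lambda$ remains a regular point of $\exp_x$ for $s\in(0,1]$; for geodesics that are simultaneously normal and abnormal (which the paper emphasizes do occur, e.g.\ in corank~$1$ groups), minimizers need not lose optimality at conjugate points, so regularity along the ray is not automatic and must be argued or built into the choice of the null set $\mathcal{C}(x)$.
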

Equation \eqref{eq:gdformula} is the \emph{definition} of geodesic dimension given in \cite{ABR-Curvature}. As a consequence of Theorem~\ref{t:curv2}, it coincides with the one given in this paper, when specified to sub-Rieman\-nian structures. In this case, to compute $\mathcal{N}(x)$, it is sufficient to compute the growth vector for the generic geodesic, and use \eqref{eq:gdformula}. Theorems~\ref{t:curv1}, \ref{t:curv2} are proved in \cite{ABR-Curvature}, and are based on a deep relation between the geodesic growth vector and the asymptotics of the exponential map on a general sub-Riemannian manifold.

\subsection{Proof of Theorem \ref{t:basicprop2}}
If $(M,\distr,g)$ is Riemannian, for any point $x \in M$ we have $\mathcal{G}_x = (\dim(M))$ for any non-trivial initial covector and $\mathcal{N}(x) = \dim(M )= \dim_H (M)$. 

If $(M,\distr,g)$ is sub-Riemannian (with $k = \rank \distr < n$), and equiregular of step $s$, let $q_i := d_i-d_{i-1}$, for $i=1,\ldots,s$ and $p_i:= k_i^{max}-k_{i-1}^{max}$, for all $i=1,\ldots,m(x)$. Observe that $m(x) \geq s \geq 2$. By Mitchell's formula \eqref{eq:mitchell}, and \eqref{eq:gdformula}, we have
\begin{align}
\dim_H(M) & = \underbracket{1+\dots+1}_{q_1} + \underbracket{2 + \dots + 2}_{q_2}+ \dots + \underbracket{s + \dots +s}_{q_s}, \label{eq:sum1} \\
\mathcal{N}(x)& = \underbracket{1+\dots+1}_{p_1}+\underbracket{3+\dots+3}_{p_2}+\dots+\underbracket{2m(x)-1+\dots+2m(x)-1}_{p_{m(x)}}. \label{eq:sum2}
\end{align}
Both sums have a total of $n$ terms, in fact
\begin{equation}
\sum_{i=1}^s q_i = \sum_{i=1}^s d_i -d_{i-1} = d_s = n, \qquad \sum_{i=1}^{m(x)}p_i = \sum_{i=1}^{m(x)} k_i^{max} -k_{i-1}^{max} = k_{m(x)}^{max} = n.
\end{equation}
Moreover $q_1 = p_1 = k<n$. The terms of \eqref{eq:sum2}, after the $k$-th, are strictly greater then the ones of \eqref{eq:sum1}. Thus, $\mathcal{N}(x) > \dim_H(M) > \dim(M)$.  $\hfill \qed$

\subsection{The Engel group}\label{s:Engel}

We discuss more in detail the Engel group introduced in Example~\ref{ex:engel}. This is the Carnot group, in dimension $n=4$, generated by the following global orthonormal left-invariant vector fields in coordinates $(x_1,x_2,x_3,x_4) \in \R^4$
\begin{equation}
X_1 = \partial_1, \qquad X_2 = \partial_2 + x_1 \partial_3 + x_1 x_2 \partial_4.
\end{equation}
It is a rank $2$ Carnot group of step $3$, with $\mathfrak{g}_1 = \spn\{X_1,X_2\}$ and 
\begin{equation}
\mathfrak{g}_2 = [X_1,X_2]=\partial_3 + x_2 \partial_4, \qquad \mathfrak{g}_3 = [X_2,[X_1,X_2]]= \partial_4,
\end{equation}
where we omit the linear span. In particular, by left-invariance, $\distr^1 = \mathfrak{g}_1$, $\distr^2 = \mathfrak{g}_1 \oplus \mathfrak{g}_2$ and $\distr^3 = \mathfrak{g}_1 \oplus \mathfrak{g}_2 \oplus \mathfrak{g}_3$. The growth vector of the distribution is $(2,3,4)$. By Mitchell's formula~\eqref{eq:mitchell} for the Hausdorff dimension we have $Q = 2 + 2 + 3 = 7$.

Let us compute the geodesic growth vector. As we will see, it is sufficient to choose the curve $\gamma(t) = e^{tX_2}(e)$ (this is a normal geodesic, by Lemma~\ref{l:char}). Using the definition, we obtain $\mathcal{F}^1 =  \spn\{X_1,X_2\}$ and, omitting the linear span,
\begin{equation}
\mathcal{F}^2 = [X_2,X_1] = \partial_3 + x_2 \partial_4, \qquad \mathcal{F}^3  = [X_2,[X_2,X_1]]=  \partial_4.
\end{equation}
This gives the maximal possible geodesic growth vector, hence
\begin{equation}
\mathcal{G}^{max}_x = (2,3,4), \qquad \forall x \in G.
\end{equation}
In particular, using~\eqref{eq:gdformula} we obtain $\mathcal{N} = 2 + 3 + 5 = 10$.
\begin{rmk}\label{r:Engelcut}
We stress that the Engel group has negligible cut loci, hence it falls into the class of metric measure spaces considered in this article. This follows from the fact that in step $3$ Carnot groups all minimizing geodesics are normal and, in particular, the set of points reached by strictly abnormal geodesics, starting from the origin, has zero measure (see \cite[Theorem 5.4]{TYStep3} and \cite[Theorem 1.5]{DMOPV-Sard} for an independent proof).
\end{rmk}

\section{Proof of Theorem \ref{p:fat}}\label{s:fat}

Let $(G,\distr,g)$ be a Carnot group of step $s$ and dimension $n$. We identify $\mathfrak{g} = T_e G$ (and its subspaces) with the vector space of left-invariant vector fields. In particular $\distr = \mathfrak{g}_1$.

\subsection{Step 1: Estimates in the fat case}
We remind that a sub-Riemannian structure $(M,\distr,g)$ is \emph{fat} (or \emph{strong bracket-generating}) if for any $x \in M$ and $X \in \distr$, $X(x) \neq0$, then $\distr_x + [X,\distr]_x=T_xM$. It is well known that fat structures does not admit non-trivial abnormal minimizers \cite{nostrolibro,montgomerybook,rifford2014sub}. If $G$ is a fat Carnot group of rank $k$, then the geodesic growth vector of any non-trivial geodesic is
\begin{equation}
\mathcal{G}_x^{max} = (k,n), \qquad \forall x \in G.
\end{equation}
By~\eqref{eq:gdformula} we have $\mathcal{N} = k + 3 (n-k) = 3n-2k$. On the other hand, the Hausdorff dimension is $Q = k + 2(n-k) = 2n-k$. Moreover, from \eqref{eq:boundRiff}, we have $N_R = Q + n -k = 3n-2k$. This proves that on a fat Carnot group $\mathcal{N} = N_R$.

To prove the inequality $\mathcal{N} > N_R$ when $G$ has step $s>2$, let $\mathcal{G}^{max} = (k_1,k_2,\ldots,k_m)$ be the maximal geodesic growth vector, with geodesic step $m \geq s > 2$. Let $q_i:=d_i-d_{i-1}$, for $i=1,\ldots,s$ and $p_i:=k_i - k_{i-1}$, for $i=1,\ldots,m$. Since $d_0 = k_0 = 0$ by convention and $d_s = k_m = n$, we have $n = \sum_{i=1}^s q_i = \sum_{i=1}^m p_i$. Thus, from \eqref{eq:gdformula}, we obtain
\begin{equation}
\mathcal{N} = \sum_{i=1}^m (2i-1)p_i = \sum_{i=2}^m (2i-2)p_i + n.
\end{equation}
On the other hand, for $N_R = Q+n -k$ and using Mitchell's formula \eqref{eq:mitchell}, we obtain
\begin{equation}
N_R = \sum_{i=1}^s i q_i + n - k = \sum_{i=2}^s i q_i +n .
\end{equation}
Arranging the terms as we did in the proof of Theorem~\ref{t:basicprop2}, we write
\begin{align}
\mathcal{N} - n   &= \underbracket{2 + \dots + 2}_{p_2} + \underbracket{4 + \dots + 4}_{p_3} + \dots +\underbracket{2m-2 + \dots + 2m-2}_{p_m}, \label{eq:sum3}\\ 
N_R - n & = \underbracket{2 + \dots + 2}_{q_2} + \underbracket{3 + \dots + 3}_{q_3} + \dots +\underbracket{s + \dots + s}_{p_s} . \label{eq:sum4}
\end{align}
Both sums have $n-k=\sum_{i=2}^m p_i = \sum_{i=2}^s q_i$ entries. Since $m \geq s >2$, the entries of \eqref{eq:sum3}, after the $p_2$-th one, are strictly greater than the corresponding ones of \eqref{eq:sum4}, and $\mathcal{N} > N_R$.

\subsection{Step 2: Ideal = Fat}

To conclude the proof of Theorem~\ref{p:fat}, we prove that any ideal Carnot group is fat. Denote with $\mathrm{ad}_X : \mathfrak{g} \to \mathfrak{g}$ the linear map:
\begin{equation}
\mathrm{ad}_X(Y) := [X,Y] =\left.\frac{d}{dt}\right|_{t=0} e^{-t X}_* Y(\gamma_X(t)),
\end{equation}
where $\gamma_X(t)=e^{tX}(x)$ is the integral curve of $X \in \mathfrak{g}_1$ starting from $x \in G$.

\begin{lemma}\label{l:char}
Let $\gamma_X(t)=e^{tX}(x)$ be the integral curve of the left-invariant vector field $X \in \mathfrak{g}_1$, starting from $x$. Then $\gamma_X$ it is a normal geodesic. It is also an abnormal geodesic if and only if there exists a non-zero $\lambda \in T_e^*G$ such that
\begin{equation}\label{eq:charabn}
\langle \lambda, \mathrm{ad}_X^i(\mathfrak{g}_1)\rangle = 0, \qquad \forall i=0,\ldots,s-1.
\end{equation}
\end{lemma}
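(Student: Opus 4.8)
The statement of Lemma~\ref{l:char} has two parts: first, that the integral curve $\gamma_X(t)=e^{tX}(x)$ of a left-invariant horizontal vector field $X\in\mathfrak g_1$ is always a normal geodesic; second, a characterization of when it is \emph{also} abnormal. For the first part I would exhibit an explicit normal extremal lifting $\gamma_X$. Since the structure is left-invariant, it suffices to work at $x=e$; a natural candidate is the covector $\lambda(t)=(e^{-p_zAt}p_x,p_z)$ dualized appropriately — in the Carnot group language, I would pick the initial covector $\lambda\in T_e^*G$ whose horizontal part is $g$-dual to $X$ and whose ``vertical'' components vanish, and check via the Hamilton equations (written in terms of the linear-on-fibers functions $h_i(\lambda)=\langle\lambda,X_i\rangle$ as in the proof of Lemma~\ref{l:geods}) that the resulting normal extremal projects exactly onto $t\mapsto e^{tX}(e)$. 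Concretely, if $\langle\lambda,X_i\rangle=u_i$ are constant in $t$ then $\dot x=\sum u_iX_i(x)$, which is precisely the integral curve of the left-invariant field $X=\sum u_iX_i$; the only thing to verify is that one can indeed choose a covector making the $h_i$ constant, and the bracket structure of the stratified algebra (the fact that brackets land in $\mathfrak g_{\ge 2}$, which the horizontal covector part need not see) makes this work. This part is essentially a bookkeeping computation.

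For the second part I would apply the Pontryagin characterization of abnormal extremals quoted right after the Lagrange-multiplier discussion: $\gamma_X$ is abnormal iff there is a nonzero Lipschitz lift $\lambda(t)\in T^*_{\gamma_X(t)}G$ with $\lambda(1)=\lambda_1$ satisfying $\sigma(\dot\lambda(t),T_{\lambda(t)}\distr^\perp)=0$, equivalently (by the standard reformulation for driftless systems) a nonzero covector $\lambda_1$ that annihilates the distribution \emph{all along} the curve. Transporting everything back to $\mathfrak g=T_eG$ by left-translation — which is an isometry, so it preserves the sub-Riemannian structure and hence abnormal extremals — the condition ``$\lambda(t)$ annihilates $\distr_{\gamma_X(t)}$ for all $t$'' becomes, after differentiating in $t$ and using $\frac{d}{dt}\big|_{0}e^{-tX}_*Y(\gamma_X(t))=\mathrm{ad}_X(Y)$, the requirement that a single $\lambda\in T_e^*G$ kills $\mathfrak g_1$, $\mathrm{ad}_X(\mathfrak g_1)$, $\mathrm{ad}_X^2(\mathfrak g_1)$, and so on. Iterating the differentiation produces exactly $\langle\lambda,\mathrm{ad}_X^i(\mathfrak g_1)\rangle=0$ for all $i\ge 0$; since $\mathfrak g$ is stratified of step $s$ with $\mathfrak g_1$ bracket-generating, the subspace $\sum_{i\ge 0}\mathrm{ad}_X^i(\mathfrak g_1)$ stabilizes after at most $s-1$ brackets, so the infinitely many conditions collapse to $i=0,\dots,s-1$, giving \eqref{eq:charabn}. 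Conversely, any $\lambda$ satisfying \eqref{eq:charabn} can be propagated (by left-translation and the group flow of $X$) to a genuine abnormal extremal lift, so the equivalence is exact.

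\textbf{Main obstacle.} The delicate point is not either implication per se but making the passage between the ``moving'' covector $\lambda(t)$ along $\gamma_X(t)$ and the single ``frozen'' covector $\lambda\in T^*_eG$ fully rigorous: one must check that the adjoint-orbit condition is genuinely equivalent to the existence of a Lipschitz abnormal lift, i.e. that no information is lost in replacing the differential constraint $\sigma(\dot\lambda,T\distr^\perp)=0$ by the algebraic span condition. The cleanest route is to note that for a left-invariant vector field the flow $e^{tX}$ conjugates the whole picture: pulling $\lambda(t)$ back by $(e^{tX})^*$ to $T_e^*G$ turns the abnormal ODE into a \emph{linear} ODE $\dot{\tilde\lambda}=-\mathrm{ad}_X^*\tilde\lambda$ (or an autonomous linear system) on $\mathfrak g^*$, whose solutions stay in $\distr^\perp$ for all $t$ precisely when the initial value is annihilated by every $\mathrm{ad}_X^i(\mathfrak g_1)$ — this is where the finite truncation at $s-1$ enters via nilpotency. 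Once this linear-algebra reformulation is in place, both directions of the iff are immediate, and the normal-geodesic claim of the first sentence falls out of the same explicit lift. I would therefore organize the proof as: (i) explicit normal lift; (ii) reduce the abnormal condition to an autonomous linear ODE on $\mathfrak g^*$ via the flow of $X$; (iii) solve that ODE and invoke nilpotency to get \eqref{eq:charabn}.
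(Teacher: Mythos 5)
Your proposal is correct and, in substance, follows the same route as the paper's: both arguments transport everything back to a single covector at the identity via the flow of $X$, and both obtain the truncation at $i=s-1$ from nilpotency (the paper phrases this as analyticity of $t\mapsto\langle\lambda,e^{-tX}_*Y\rangle$ whose derivatives at $t=0$ are $\langle\lambda,\mathrm{ad}_X^i Y\rangle$; your coadjoint linear ODE on $\mathfrak{g}^*$ produces exactly the same finite polynomial). The one genuine difference is in the normality claim: the paper verifies the Lagrange multiplier rule $\eta_1\circ D_uE_e=D_uJ$ directly, pairing the covector $\eta$ (dual to the control on $\mathfrak{g}_1$, vanishing on $\mathfrak{g}_2\oplus\cdots\oplus\mathfrak{g}_s$) against the explicit formula for $D_uE_e$, whereas you construct a normal extremal with that same initial covector through Hamilton's equations. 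By the Pontryagin characterization quoted in the paper the two formulations are equivalent, so this is a matter of which side of the duality one computes on. Your version does require one verification you only gesture at: that the components $h_W=\langle\lambda(t),W\rangle$ for $W\in\mathfrak{g}_j$, $j\geq 2$, remain zero along the Hamiltonian flow. This follows by downward induction on the stratum, since $\dot h_W=\sum_i h_i\,h_{[X_i,W]}$ with $[X_i,W]\in\mathfrak{g}_{j+1}$, so the top-stratum components are constant (hence zero) and the vanishing propagates downward; only then are the $h_i$ constant and the projected curve equal to $e^{tX}(e)$. With that detail filled in, both parts of your argument are complete, and the abnormal direction (including the converse, which is immediate once the lift is identified with the pullback of $\lambda$ along the flow) matches the paper's.
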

\begin{proof}
Let $X_1,\ldots,X_k$ be a basis of left-invariant vector fields. Clearly, $X = \sum_{i=1}^k u_i X_i$ for a constant control $u \in L^\infty([0,1],\R^k)$. By left-invariance we can set $x = e$. A well known formula for the differential of the end-point map \cite{nostrolibro,rifford2014sub}  $D_u E_e : T_u \mathcal{U} \simeq \mathcal{U} \to T_{\gamma(1)} G$, gives
\begin{equation}
D_u E_e(v) = \int_0^1 e^{(1-t) X}_* \sum_{i=1}^k v_i(t) X_i(\gamma(t))dt, \qquad \forall v \in L^\infty([0,1],\R^k).
\end{equation}
We first prove that $\gamma_X(t)$ is a normal geodesic. Consider the covector $\eta \in T_{e} G$ such that $\langle\eta ,X_i\rangle = u_i$ and $\langle \eta, W\rangle = 0$ for all $W \in \mathfrak{g}_2 \oplus \ldots \oplus \mathfrak{g}_s$. Then
\begin{equation}
\langle (e^{-X})^* \eta, D_u E_e (v)\rangle =  \int_{0}^1 \sum_{i=1}^k v_i(t) \langle \eta, e^{-t X}_* X_i(\gamma(t))\rangle  dt = (u,v)_{L^2([0,1],\R^k)} = D_u J (v).
\end{equation}
Thus $\gamma_X(t)$ with control $u$ satisfies the normal Lagrange multiplier rule with covector $\eta_1=(e^{-X})^*\eta \in T_{\gamma(1)}^*G$, and is a normal geodesic. By definition $\gamma_X(t)$ is also abnormal if and only if there exists a $\lambda_1 \in T_{\gamma(1)}^* G$ such that $\lambda_1 \circ D_u E_e = 0$. That is, if and only if there exists $\lambda = (e^{X})^* \lambda_1 \in T_e^*G$ such that
\begin{equation}
0  = \langle \lambda, e^{-X}_* D_u E_e(v) \rangle  = \int_0^1 \sum_{i=1}^k v_i(t)  \langle \lambda, e^{-tX}_* X_i(\gamma(t)) \rangle dt, \qquad \forall v \in L^\infty([0,1],\R^k).
\end{equation}
This is true if and only if $\langle\lambda, e^{-tX}_* Y \rangle = 0$ for any $Y \in \mathfrak{g}_1$ and $t \in [0,1]$. Since all the relevant data are analytic, $t \mapsto \langle\lambda, e^{-tX}_* Y \rangle $ is an analytic function of $t$. Hence it vanishes if and only if all its derivatives at $t=0$ are zero, and this condition coincides with \eqref{eq:charabn}.
\end{proof}

\begin{lemma}\label{l:step2}
Let $G$ be a Carnot group of step $s \leq 2$. $G$ is ideal if and only if it is fat.
\end{lemma}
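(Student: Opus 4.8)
The plan is to derive the statement from Lemma~\ref{l:char} together with the elementary facts that normal geodesics are locally minimizing and that Carnot groups are complete. The case $s=1$ is degenerate ($G\simeq\R^n$ is Euclidean, hence trivially both fat and ideal), so assume $s=2$; then $\mathfrak{g}=\mathfrak{g}_1\oplus\mathfrak{g}_2$ with $[\mathfrak{g}_1,\mathfrak{g}_1]=\mathfrak{g}_2$. First I would record that the fat condition localizes at the identity: since a left-invariant horizontal field $X$ vanishes nowhere unless $X=0$, and $\mathrm{ad}_X$ is left-invariant, $G$ is fat if and only if $\mathfrak{g}_1+[X,\mathfrak{g}_1]=\mathfrak{g}$ for every nonzero $X\in\mathfrak{g}_1$; as $[X,\mathfrak{g}_1]\subseteq[\mathfrak{g}_1,\mathfrak{g}_1]=\mathfrak{g}_2$, this amounts to requiring $[X,\mathfrak{g}_1]=\mathfrak{g}_2$ for every nonzero $X\in\mathfrak{g}_1$.

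For the implication ``fat $\Rightarrow$ ideal'' I would merely invoke the fact recalled after Definition~\ref{d:fat} (see \cite[Section~5.6]{montgomerybook}): a fat structure admits no non-trivial abnormal curves, hence no non-trivial abnormal minimizers, and being complete it is ideal. This direction requires no restriction on the step.

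The content is in the converse, which I would prove by contraposition: assuming $G$ is not fat, I produce a non-trivial abnormal minimizer. By the reduction above there is a nonzero $X\in\mathfrak{g}_1$ with $[X,\mathfrak{g}_1]$ a proper subspace of $\mathfrak{g}_2$. Pick a nonzero linear functional on $\mathfrak{g}_2$ annihilating $[X,\mathfrak{g}_1]$ and extend it by zero on $\mathfrak{g}_1$ to a nonzero $\lambda\in T_e^*G$. Then $\langle\lambda,\mathrm{ad}_X^0(\mathfrak{g}_1)\rangle=\langle\lambda,\mathfrak{g}_1\rangle=0$ and $\langle\lambda,\mathrm{ad}_X^1(\mathfrak{g}_1)\rangle=\langle\lambda,[X,\mathfrak{g}_1]\rangle=0$, so condition \eqref{eq:charabn} holds for all $i=0,\dots,s-1=1$. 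By Lemma~\ref{l:char}, the non-constant curve $\gamma_X(t)=e^{tX}(e)$ is simultaneously a normal geodesic and an abnormal curve.

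Finally I would upgrade $\gamma_X$ to a minimizer. Being a normal geodesic it is locally minimizing (see \cite[Chapter~4]{nostrolibro}), so there is $\varepsilon>0$ with $\gamma_X|_{[0,\varepsilon]}$ realizing the distance between its endpoints; after the affine reparametrization $t\mapsto\varepsilon t$ this subarc becomes $\gamma_{\varepsilon X}:[0,1]\to G$, which is still minimizing, is non-constant, and is abnormal because Lemma~\ref{l:char} applies verbatim to $\varepsilon X$ (one has $[\varepsilon X,\mathfrak{g}_1]=[X,\mathfrak{g}_1]$, so the same $\lambda$ works). Hence $G$ has a non-trivial abnormal minimizer and is not ideal. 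I expect the only mildly delicate point to be exactly this passage to a short subarc — making sure that both abnormality and minimality survive the restriction — which the homogeneity $[\varepsilon X,\mathfrak{g}_1]=[X,\mathfrak{g}_1]$ settles at once.
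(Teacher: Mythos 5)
Your proposal is correct and follows essentially the same route as the paper's proof: the paper likewise handles fat $\Rightarrow$ ideal by citing the absence of abnormal curves on fat structures, and for the converse picks a nonzero $X\in\mathfrak{g}_1$ with $\mathfrak{g}_1\oplus[X,\mathfrak{g}_1]\subsetneqq T_eG$, a covector $\lambda$ annihilating this subspace, and applies Lemma~\ref{l:char} to conclude that $\gamma_X$ is a normal and abnormal geodesic, a short segment of which is minimizing. Your extra care with the $s=1$ case and the reparametrization of the short subarc is fine but not needed beyond what the paper states.
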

\begin{proof}
The implication fat $\Rightarrow$ ideal is trivial. Then, assume that $\mathfrak{g}_1$ is not fat, i.e.\ there exists $X \neq 0 \in \mathfrak{g}_1$ such that $\mathfrak{g}_1 \oplus [X,\mathfrak{g}_1] \subsetneqq T_e G$. Hence there exists $\lambda \neq 0 \in T_e^*G$ such that
\begin{equation}
0 = \langle \lambda, \mathfrak{g}_1 \rangle = \langle \lambda, \mathrm{ad}_X (\mathfrak{g}_1)\rangle.
\end{equation}
By Lemma~\ref{l:char}, $\gamma_X(t) = e^{tX}(e)$ is a normal and abnormal geodesic. In particular, a sufficiently short segment of it is a minimizing curve.
\end{proof}
We learned the following fact by E. Le Donne. For the reader's convenience we provide a simple proof here, which is similar to the one in \cite{Enrico-Regularity}.
\begin{lemma}\label{l:step3noideal}
Let $G$ be a Carnot group of step $s \geq 3$. Then there exists a non-zero $X \in \mathfrak{g}_1$ such that the integral curve $\gamma_X(t)=e^{t X}(e)$ is a normal and abnormal geodesic.
\end{lemma}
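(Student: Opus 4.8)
The plan is to deduce everything from Lemma~\ref{l:char}. Since any such $\gamma_X$ is automatically a normal geodesic, it suffices to exhibit a single non-zero $X\in\mathfrak{g}_1$ together with a non-zero covector $\lambda\in T_e^*G=\mathfrak{g}^*$ satisfying $\langle\lambda,\mathrm{ad}_X^i(\mathfrak{g}_1)\rangle=0$ for $i=0,\dots,s-1$. I will always take $\lambda$ supported on a single graded layer, using the identification $\mathfrak{g}^*\simeq\mathfrak{g}_1^*\oplus\cdots\oplus\mathfrak{g}_s^*$ dual to the stratification. The point is that $\mathrm{ad}_X^i(\mathfrak{g}_1)\subseteq\mathfrak{g}_{1+i}$ for every $i$, so if $\lambda\in\mathfrak{g}_j^*$ (i.e.\ $\lambda$ annihilates all layers except $\mathfrak{g}_j$), then all the conditions with $i\neq j-1$ hold automatically, and the only surviving requirement is $\mathrm{ad}_X^{j-1}(\mathfrak{g}_1)\subseteq\ker\lambda$. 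Hence it is enough to find $0\neq X\in\mathfrak{g}_1$ and an index $j\in\{2,\dots,s\}$ for which $\mathrm{ad}_X^{j-1}(\mathfrak{g}_1)$ is a \emph{proper} subspace of $\mathfrak{g}_j$.

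The construction of such a pair $(X,j)$ splits according to the size of the first bracket layer. Write $k:=\dim\mathfrak{g}_1$; for every $0\neq X\in\mathfrak{g}_1$ the linear map $\mathrm{ad}_X\colon\mathfrak{g}_1\to\mathfrak{g}_2$ kills $X$ (because $[X,X]=0$), so $\dim[X,\mathfrak{g}_1]\le k-1$. If $\dim\mathfrak{g}_2\ge k$, then for \emph{any} non-zero $X_0\in\mathfrak{g}_1$ we get $[X_0,\mathfrak{g}_1]\subsetneq\mathfrak{g}_2$: choosing $j=2$ and any non-zero $\lambda\in\mathfrak{g}_2^*$ vanishing on $[X_0,\mathfrak{g}_1]$ finishes the argument with $X=X_0$. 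If instead $\dim\mathfrak{g}_2\le k-1$, I use $j=3$, which is legitimate because $s\ge3$ forces $\mathfrak{g}_3\neq\{0\}$: fix any non-zero $\mu\in\mathfrak{g}_3^*$ and consider the linear map $\phi\colon\mathfrak{g}_1\to\mathfrak{g}_2^*$ defined by $\phi(X)(\xi):=\langle\mu,[X,\xi]\rangle$, which makes sense since $[X,\xi]\in\mathfrak{g}_3$. Its image lies inside $\mathfrak{g}_2^*$, of dimension at most $k-1<k$, so $\phi$ has a non-trivial kernel; picking $0\neq X\in\ker\phi$ gives $\langle\mu,[X,\mathfrak{g}_2]\rangle=0$, and since $\mathrm{ad}_X^2(\mathfrak{g}_1)=[X,[X,\mathfrak{g}_1]]\subseteq[X,\mathfrak{g}_2]$, the required vanishing $\langle\mu,\mathrm{ad}_X^2(\mathfrak{g}_1)\rangle=0$ follows. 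In either branch the resulting $X$ is non-zero and, by Lemma~\ref{l:char}, $\gamma_X$ is simultaneously normal and abnormal.

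The one point that needs care — and the reason a uniform choice of layer does not work — is that one cannot in general place $\lambda$ on the top layer $\mathfrak{g}_s$: the bound $\dim\mathrm{ad}_X^{s-1}(\mathfrak{g}_1)\le k-1$ only guarantees properness in $\mathfrak{g}_s$ when $\dim\mathfrak{g}_s\ge k$, which may fail. The dichotomy above is exactly what is needed to always land on a layer ($\mathfrak{g}_2$ when it is large, $\mathfrak{g}_3$ when it is small) on which the relevant image is forced to be proper; note that in the second branch only the inequality $\dim\mathfrak{g}_2<k$ is used, not a bracket-generating refinement. Beyond setting up this dichotomy I do not anticipate any difficulty: everything rests on the trivial rank estimate $\dim[X,\mathfrak{g}_1]\le k-1$, the grading inclusions $\mathrm{ad}_X^i(\mathfrak{g}_1)\subseteq\mathfrak{g}_{1+i}$, and Lemma~\ref{l:char}. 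Finally, since normal geodesics are locally length-minimizing, a sufficiently short arc of the curve $\gamma_X$ produced above is a non-trivial abnormal minimizer, which is the conclusion used in the proof of Theorem~\ref{p:fat}.
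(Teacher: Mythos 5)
Your proof is correct and follows essentially the same route as the paper's: both reduce to Lemma~\ref{l:char} with a covector supported on a single graded layer ($\mathfrak{g}_2$ or $\mathfrak{g}_3$), and in the second branch find $X$ by the same dimension count $\dim\ker \geq \dim\mathfrak{g}_1 - \dim\mathfrak{g}_2 \geq 1$. Your dichotomy on $\dim\mathfrak{g}_2$ versus $k$ (rather than on whether $\mathrm{ad}_X(\mathfrak{g}_1)=\mathfrak{g}_2$ for every $X$) is a slight streamlining — your second branch only uses the inclusion $\mathrm{ad}_X^2(\mathfrak{g}_1)\subseteq[X,\mathfrak{g}_2]$ instead of the equality $\mathrm{ad}_X(\mathfrak{g}_1)=\mathfrak{g}_2$ — but the substance is identical.
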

\begin{proof}
If there exists a $X \neq 0 \in \mathfrak{g}_1$ such that $\mathrm{ad}_X(\mathfrak{g}_1) \subsetneqq \mathfrak{g}_2$, using the same argument of Lemma~\ref{l:step2}, we show that $\gamma_X(t)$ is abnormal and normal. Let $q_i := \dim \mathfrak{g}_i$, for $i=1,\ldots,s$. Then assume that, for any $X\neq 0 \in \mathfrak{g}_1$, we have $\mathrm{ad}_X(\mathfrak{g}_1) = \mathfrak{g}_2$. This implies $q_2 \leq q_1-1$. Consider a basis $Y_1,\ldots,Y_{q_2}$ of $\mathfrak{g}_2$ and a basis $Z_1,\ldots,Z_{q_3}$ of $\mathfrak{g}_3$. Let $\lambda \in T_e^*G$ such that $\langle \lambda,Z_i\rangle = 0$ if $i >1$ and $\langle \lambda,Z_1\rangle = 1$, while $\langle\lambda,\mathfrak{g}_i\rangle = 0$ for all $i \neq 3$. Consider the linear maps $A_i:=\lambda \circ \mathrm{ad}_{Y_i} : \mathfrak{g}_1 \to \R$, for $i=1,\ldots,q_2$. We have $\dim \ker A_i \geq q_1 -1$. Moreover
\begin{align}
\dim (\ker A_1 \cap \ker A_2 ) & = \dim \ker A_1 + \dim \ker A_2 - \dim (\ker A_1 + \ker A_2) \\
& \geq 2(q_1 - 1) - q_1 = q_1 -2.
\end{align}
After a finite number of similar steps we arrive to
\begin{equation}
\dim(\ker A_1 \cap \ldots \cap \ker A_{q_2}) \geq q_1 -q_2 \geq q_1 -( q_1 - 1) = 1.
\end{equation}
Thus let $X \neq 0 \in \ker A_1 \cap \ldots \cap \ker A_{q_2}$. We show that $\gamma_X(t) = e^{t X}(e)$, which is a normal geodesic, verifies the abnormal characterization of Lemma~\ref{l:char} with covector $\lambda$. Since $\mathrm{ad}_X^i(\mathfrak{g}_1) \subseteq \mathfrak{g}_{i+1}$, we have $\langle\lambda,\mathrm{ad}_X^i (\mathfrak{g}_1)\rangle = 0$ for all $i \neq 2$ by construction of $\lambda$. Finally,
\begin{equation}
\langle \lambda, \mathrm{ad}_X^2(\mathfrak{g}_1) \rangle = \langle \lambda,\mathrm{ad}_X(\mathfrak{g}_2)\rangle  = 0,
\end{equation}
where in the last passage we used the fact that $\mathrm{ad}_X(\mathfrak{g}_1) = \mathfrak{g}_2$, that the latter is generated by the $Y_i$, and the definition of $X$. Then $\gamma_X(t)$ is abnormal by Lemma~\ref{l:char}.
\end{proof}
To conclude the proof, recall that Carnot groups are complete. Since fat structures do not admit non-trivial abnormal curves \cite[Section 5.6]{montgomerybook}, fat $\Rightarrow$ ideal. Moreover, ideal $\Rightarrow$ step $s \leq 2$ (Lemma~\ref{l:step3noideal}). On the other hand, ideal and step $s \leq 2$ $\Rightarrow$ fat (Lemma~\ref{l:step2}). $\hfill \qed$

\bibliographystyle{abbrv}
\bibliography{MCPc1}

\end{document}